\algrenewcommand{\algorithmicrequire}{\textbf{Input:}}
\algrenewcommand{\algorithmicensure}{\textbf{Output:}}
\algrenewcommand{\algorithmicforall}{\textbf{for each}}
\let\bbordermatrix\bordermatrix
\patchcmd{\bbordermatrix}{8.75}{4.75}{}{}
\patchcmd{\bbordermatrix}{\left(}{\left[}{}{}
\patchcmd{\bbordermatrix}{\right)}{\right]}{}{}
\theoremstyle{plain}
\newtheorem{thm}{Theorem}[section]
\newtheorem{lem}[thm]{Lemma}
\newtheorem{cor}[thm]{Corollary}
\newtheorem{prop}[thm]{Proposition}
\newtheorem{conj}[thm]{Conjecture}
\newtheorem{as}[thm]{Assumption}
\newtheorem*{thm:GoodIffMfullrank}{Theorem \ref{thm:GoodIffMfullrank}}
\newtheorem*{thm:CEDimpliesGood}{Theorem \ref{thm:CEDimpliesGood}}
\newtheorem*{que:existence}{Question \ref{que:existence}}
\newtheorem*{que:constructions}{Question \ref{que:constructions}}
\newtheorem*{prop:DuplicateVertexInG}{Conjecture \ref{prop:DuplicateVertexInG}}
\newtheorem*{prop:AddLineToG}{Proposition \ref{prop:AddLineToG}}
\theoremstyle{definition}
\newtheorem{de}[thm]{Definition}
\newtheorem{ex}[thm]{Example}
\newtheorem{re}[thm]{Remark}
\newtheorem{cond}[thm]{Condition}
\newtheoremstyle{named}{}{}{\itshape}{}{\bfseries}{.}{.5em}{\thmnote{#3's }#1}
\theoremstyle{named}
\newcommand{\CC}{{\mathbb C}}
\newcommand{\RR}{{\mathbb R}}
\newcommand{\NN}{{\mathbb N}}
\newcommand{\calD}{\mathcal{D}}
\newcommand{\calE}{\mathcal{E}}
\newcommand{\im}{\operatorname{im}}
\newcommand{\gl}{\mathfrak{gl}}
\newcommand{\GL}{\operatorname{GL}\nolimits}
\newcommand{\rk}{\operatorname{rk}}
\newcommand{\diag}{\operatorname{diag}\nolimits}
\DeclareMathSymbol{\widehatsym}{\mathord}{largesymbols}{"62}
\newcommand\lowerwidehatsym{%
	\text{\smash{\raisebox{-1.3ex}{%
				$\widehatsym$}}}}
\newcommand\fixwidehat[1]{%
	\mathchoice
	{\accentset{\displaystyle\lowerwidehatsym}{#1}}
	{\accentset{\textstyle\lowerwidehatsym}{#1}}
	{\accentset{\scriptstyle\lowerwidehatsym}{#1}}
	{\accentset{\scriptscriptstyle\lowerwidehatsym}{#1}}
}
\def\quotient#1#2{%
	\raise.5ex\hbox{$#1$}\big/\lower.5ex\hbox{$#2$}%
}
\newcommand\scalemath[2]{\scalebox{#1}{\mbox{\ensuremath{\displaystyle #2}}}}
\begin{document}

\title[Existence of reparametrizations for linear compartment models]{On the existence of identifiable reparametrizations for linear compartment models}

\begin{abstract}
The parameters of a linear compartment model are usually estimated from experimental input-output data. A problem arises when infinitely many parameter values can yield the same result; such a model is called unidentifiable. In this case, one can search for an identifiable reparametrization of the model: a map which reduces the number of parameters, such that the reduced model is identifiable.
We study a specific class of models which are known to be
unidentifiable. Using algebraic geometry and graph theory, we
translate a criterion given by Meshkat and Sullivant for the existence
of an identifiable scaling reparametrization to a new criterion based
on the rank of a weighted adjacency matrix of a certain bipartite graph. This allows us to derive several new constructions to obtain graphs with an identifiable scaling reparametrization. Using these constructions, a large subclass of such graphs is obtained. Finally, we present a procedure of subdividing or deleting edges to ensure that a model has an identifiable scaling reparametrization. 
\end{abstract}

\author[J.A.~Baaijens]{Jasmijn A. Baaijens}
\address[Jasmijn A. Baaijens]{Centrum voor Wiskunde en Informatica, Amsterdam,
	The Netherlands}
\email{baaijens@cwi.nl}

\author[J.~Draisma]{Jan Draisma}
\address[Jan Draisma]{
Department of Mathematics and Computer Science\\
Technische Universiteit Eindhoven\\
P.O. Box 513, 5600 MB Eindhoven, The Netherlands;
and Vrije Universiteit and Centrum voor Wiskunde en Informatica, Amsterdam,
The Netherlands}
\thanks{Both authors are supported by a Vidi grant from
the Netherlands Organisation for Scientific Research
(PI's Sch\"{o}nhuth and Draisma, respectively)}
\email{j.draisma@tue.nl}

\maketitle

\section{Introduction}
\label{sec:introduction}
Linear compartment models are used to describe the transport of material between different compartments of a system and appear widely in the fields of systems biology and pharmacokinetics. These models can be given by a directed graph, where the edges represent the transport of material from one compartment to another. The rate of flow from $i$ to $j$ is assumed to be time-invariant and linear in the amount of material in compartment $i$. We will study identifiability of a particular class of models and conditions for the existence of identifiable scaling reparametrizations, following and extending the ideas of Meshkat and Sullivant \cite{MS}.

\subsection{Problem description}
The parameters corresponding to a linear compartment model are often unknown and are therefore estimated from experimental data. An important step in the modeling process is to check \emph{before} experimenting whether several or even infinitely many parameter sets could yield the same data. If this is the case, it is impossible to tell which parameter values are correct, hence the parameter estimates could lead to wrong predictions.

We assume that the experimental data consists of input-output values:
the input corresponding to the amount of material that was added to
the system in certain input compartments, and the output corresponding
to the amount or concentration of material measured in the output
compartments.  Roughly, a model is called \emph{identifiable} if we can
recover the parameter values from the (noiseless) input-output data of
this experiment; we refer to Section~\ref{sec:identifiability} for the
precise notion in our context. If there is a finite number of possible
parameter values corresponding to given input-output data, then we can
indeed recover the parameter values, at least locally.

A compartment model can be described by a directed simple graph $G=(V,E)$, i.e. a directed graph without loops or multiple edges.
Throughout this paper, a graph $G$ is assumed to be directed unless stated otherwise, with $n=|V|$ the number of vertices in $G$ and  $m=|E|$ the number of edges in $G$. Let $[k]$ denote the set $\{1,\ldots,k\}$ for given $k\in \NN$.
We associate to $G$ the $n\times n$ \emph{parameter matrix} $A(G)$ defined by
\begin{equation}
    A(G)_{ij} =
    \begin{cases}
        a_{ii} &\mbox{if } i=j \\
        a_{ij} & \mbox{if } j\to i \in E \\
        0 & \mbox{otherwise, }  \\
    \end{cases} \label{eq:defA(G)}
\end{equation}
where the $a_{ij}$ $(i,j\in [n], i\neq j)$ are independent real parameters representing the rate of transfer from compartment $j$ to compartment $i$. Possible outflow of material to the exterior is taken into account: each compartment is allowed to have a leak, given by $a_{0i}$, which represents the rate of transfer of material from compartment $i$ to some compartment outside the system (the environment). The diagonal entries of $A(G)$ are defined as $a_{ii}=-a_{0i}-\sum_{j\neq i} a_{ji}$, the negative total flow out of compartment $i$.
Observe that the parameter matrix $A(G)$ uniquely determines $G$ and vice versa.

The \emph{parameter space} of a compartment model given by $G$ consists of all matrices of the form $A(G)$. This space will be denoted by $\Theta_G\subseteq \RR^{n\times n}$, to emphasize that the parameter space depends on the graph $G$. The elements of $\Theta_G$ are $n\times n$ matrices which have zeros on positions $(i,j)$ with $i\neq j$ such that $j\to i$ is not an edge in $G$. In particular, the elements of $\Theta_G$ have $n+m$ nonzero positions which we can choose freely.

\begin{re}
Almost every statement in this paper involves a matrix $A$ corresponding to the given graph $G$: either $A=A(G)$ or $A\in \Theta_G$.
When we write $A=A(G)$, we mean the symbolic matrix defined in
equation~\eqref{eq:defA(G)}. On the other hand, by $A\in \Theta_G$ we
mean a matrix with the zero pattern of $A(G)$ and parameter values
substituted for the symbolic entries $a_{ij}$, i.e. an element of
$\RR^{n\times n}$.
\end{re}

A linear compartment model described by a graph $G$ gives rise to a system of linear differential equations. Let $x\in\RR^n$ be the state variable representing the concentration of material in each compartment, let $u\in \RR^n$ be the input vector corresponding to the input data of the experiment, and let $y\in\RR^n$ be the output vector representing the measurement data. Furthermore, let $A=A(G)$ be the parameter matrix corresponding to $G$, and let $B\in \RR^{n\times n}$ be a matrix that indicates from which compartments the output is obtained.  Then the transport of material through the compartments can be described by a parametrized system:
\vspace{2mm}
\begin{equation}\label{eq:generalODEsystem}
    \begin{aligned}
        \dot{x}(t) &= Ax(t)+u(t) \\
        y(t) &= Bx(t).
    \end{aligned}
    \vspace{2mm}
\end{equation}
Note that the matrices $A$ and $B$ do not depend on the time $t$, since we assume the model to be time-invariant.

As in \cite{MS}, we only consider a specific class of linear compartment models, namely the models that satisfy the following three assumptions:
\begin{as}\label{ass:i/o-compartment}
The input and output take place only in compartment 1.
\end{as}
This implies that the input vector is of the form $u = (u_1, 0, \ldots, 0)^T\in\RR^n$ and that the output vector $y$ is of the form $(x_1,0,\ldots,0)^T$. Therefore, system \eqref{eq:generalODEsystem} can be simplified to system \eqref{eq:ODEsystem}.
\begin{equation}\label{eq:ODEsystem}
    \begin{aligned}
        \dot{x}(t) &= Ax(t)+u(t) \\
        y(t) &= x_1(t).
    \end{aligned}
    \vspace{2mm}
\end{equation}
The output $y$ is no longer a vector in $\RR^n$, but just a value in $\RR$. Because of this assumption, we do not need to indicate the input and output in the graph representation of a given model.
\begin{as}\label{ass:stronglyconnected}
The graph $G$ is strongly connected.
\end{as}
In other words, there is a directed path from any vertex in $G$ to any
other vertex in $G$. A path will be
denoted by a sequence of vertices: the sequence $(v_0,v_1,\ldots,v_k)$
represents the path from $v_0$ to $v_k$ using the edges $v_0\to v_1$,
$v_1\to v_2,\ldots,v_{k-1}\to v_k$. 
\begin{as}\label{ass:leak}
Every compartment has a leak, and the leak parameters $a_{0i}, i \in
[n]$ are independent from each other and from the edge parameters
$a_{ij}$ with $i,j\in [n],\ i \neq j$.
\end{as}
This assumption ensures that all parameters $a_{ij}$ for $i,j \in [n]$,
including the diagonal ones defined as above, are independent. As a consequence, the dimension of the parameter space $\Theta_G$ equals $m+n$. A leak at compartment $i$ would
correspond to an edge from $i$ to the environment, but these edges are not included in $G$.

\begin{re}
In a biological setting, the parameters $a_{ij}$ with $i\neq j$ must be nonnegative, or they would correspond to a negative flow. Combining this with the assumption that every compartment has a leak, it follows that the parameters $a_{ii}$, defined as $-a_{0i}-\sum_{j\neq i} a_{ji}$, must be strictly negative. These constraints are not accounted for in our identifiability analysis, but they may help to recover the correct parameters when a model is only locally identifiable or even unidentifiable.
\end{re}

\begin{ex} \label{ex:introduction}
Consider the general 2-compartment model and its graph representation in Figure~\ref{fig:2compartment}.
This model can be described by the following ODE system:
\begin{figure}[tb]
\centering
\begin{subfigure}[t]{0.51\textwidth}
\centering
\includegraphics[height=2cm]{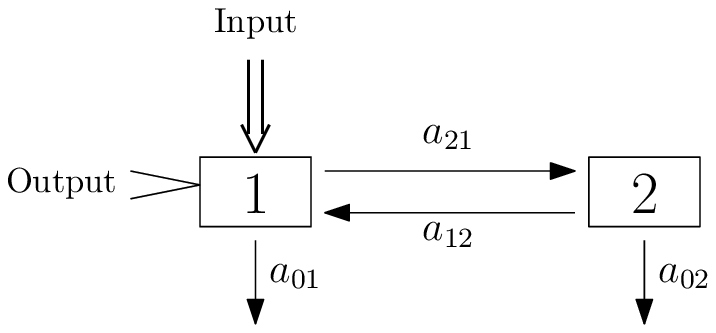}
\caption{Linear compartment model}
\label{fig:2compartmentM}
\end{subfigure}%
~
\begin{subfigure}[t]{0.48\textwidth}
\centering
\includegraphics[height=1cm]{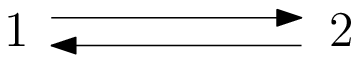}
\caption{Graph representation}
\label{fig:2compartmentG}
\end{subfigure}
\caption{General 2-compartment model}
\label{fig:2compartment}
\end{figure}
\begin{equation*}
    \begin{aligned}
        \dot{x_1}(t) &= -(a_{01}+a_{21})x_1(t)+a_{12}x_2(t)+u(t) \\
        \dot{x_2}(t) &= a_{21}x_1(t)+-(a_{12}+a_{02})x_2(t) \\
        y(t) &= x_1(t).
    \end{aligned}
    \vspace{2mm}
\end{equation*}
Define $a_{11}=-a_{01}-a_{21}$ and $a_{22}=-a_{12}-a_{02}$, then the equations for $\dot{x_1},\dot{x_2}$ can be written as
\begin{equation*}
\begin{bmatrix} \dot{x_1}(t) \\ \dot{x_2}(t) \end{bmatrix} =
\begin{bmatrix} a_{11} & a_{12} \\ a_{21} & a_{22} \end{bmatrix} \begin{bmatrix} x_1(t) \\ x_2(t) \end{bmatrix} + \begin{bmatrix} u_1(t) \\ 0 \end{bmatrix},
\vspace{2mm}
\end{equation*}
which bring the system of equations in the form of system \eqref{eq:ODEsystem}.
\end{ex}

When considering model identifiability, a problem that arises is what
to do with unidentifiable systems. As we shall see in
Section~\ref{sec:identifiability}, a model satisfying Assumptions
\ref{ass:i/o-compartment}-\ref{ass:leak} cannot be identifiable unless
$n=1$. We follow the approach of \cite{MS} by searching for
identifiable combinations of parameters and using these to find
\emph{identifiable scaling reparametrizations} of the original model.
An identifiable reparametrization is a map which transforms the model
into a model whose parameter space is lower dimensional, such that it
\emph{is} identifiable. We restrict ourselves to rational scaling
reparametrizations; these correspond to a rational scaling of the
state variables, as described in Section~\ref{sec:identifiability}.
The advantage of a scaling reparametrization is that it has a
relatively simple connection to the original model. Although the
parameters of the reparametrized model do not allow us to estimate the
original parameter values, they correspond to certain combinations of
the original parameters and hence we can predict relative size changes
of these parameters. Our primary goal is to classify 
models for which
there exists an identifiable scaling reparametrization. 
As we will see, already under the restrictive assumptions above
this task requires new combinations of mathematical techniques.

A related question is how to construct, from a model which has an identifiable
scaling reparametrization, larger models that also admit such
a reparametrization; or, alternatively, how to transform a model that
does {\em not} admit an identifiable scaling reparametrization into
one that does. To answer these questions, it is crucial to understand
the structures in a graph that allow the corresponding model to have
an identifiable scaling reparametrization. In
Section~\ref{sec:constructions} several combinatorial constructions
and necessary or sufficient conditions are presented for the existence
of an identifiable scaling reparametrization. And while our model
assumptions are indeed quite restrictive, we will show that some of
our results extend to a more general setting.


\subsection{Previous work} \label{sec:PreviousWork}
The concept of identifiability of dynamical systems was introduced by Bellman and \r{A}str\"{o}m \cite{Bellman} in 1970, and has been studied extensively since. Godfrey \cite{Godfreybook} gives a thorough description of compartment models and applications, also treating the concept of identifiability. There have been different approaches to determining whether a system is identifiable or not, where one has to distinguish between local and global identifiability, as defined in \cite{Bellman}.
These methods include a Taylor series expansion approach \cite{Pohjanpalo}, a Laplace transform approach \cite{Bellman}, a similarity transformation approach \cite{Chapman, Yates} also known as exhaustive modeling, and a differential algebra approach \cite{Ljung,Meshkat1}. A graph theoretical approach to parameter identifiability is described in \cite{Boukhobza}.
A practical comparison of different algorithms for parameter identifiability analysis of biological systems is given in \cite{Raue}.

We consider the question of what to do with unidentifiable systems. In
this case there are too many parameters (unknowns) compared to the
amount of information obtained from the experiment, so the parameter
space has to be constrained somehow. This can be done using a
reparametrization of the original system, which reduces it to a model
having a parameter space of lower dimension. A procedure for finding
such a reparametrization has been discussed for the differential
algebra approach \cite{Ljung,Meshkat1,Meshkat3}, for the Taylor
series approach \cite{Evans}, and for the similarity transformation
approach \cite{Chappell, Merkt}. In \cite{Merkt,Yates} it is observed
that non-identifiability of a system of ODE's is often due to Lie
point 
symmetries. We would like to point out that this is not the entire story
in our setting; see Remark~\ref{re:Lie}. As we only consider a specific class of
models, the problem of finding identifiable reparametrizations becomes
easier compared to the general setting discussed in these references. 

The motivation for this restrictive setting is that our goal is different
from the results described above. We are interested in certain structures
of graphs that cause the lack or allow the existence of an identifiable
scaling reparametrization for the corresponding model. After all,
it would be very useful to learn not just \emph{whether} a given model
has an identifiable scaling reparametrization, but also \emph{why} it
does or does not, and {\em how} the model can be adapted to obtain one
that does admit such a reparameterization. Our aim, in a restricted
setting, is therefore more ambitious than existing results for more
general settings. We build upon \cite{MS}, where the same
class of models is analyzed and several results for the existence of
identifiable reparametrizations are derived.  Meshkat and Sullivant also
present an algorithm to find such a reparametrization if one exists. We
will discuss their main results before extending these ideas.

\subsection{Organization of this paper} \label{subsec:organization}
Our main goal is to obtain a classification of graphs (satisfying our assumptions) for which there exists an identifiable scaling reparametrization. Section~\ref{sec:identifiability} summarizes the definitions and results from \cite{MS}, which we will need for our analysis. This involves the concepts of identifiability and identifiable (scaling) reparametrizations, and a criterion for the existence of an identifiable scaling reparametrization for a given model. This criterion will be referred to as the \emph{dimension criterion}. 

In Section~\ref{sec:newcriterion} we derive a reformulation of the dimension criterion, which is the main result of this paper:

\begin{thm}\label{thm:GoodIffMfullrank}
	Let $G=(V,E)$ be a graph satisfying Assumptions~\ref{ass:i/o-compartment}-\ref{ass:leak}.
	Then $G$ has an identifiable scaling reparametrization if and only if the  matrix $B(G)$ defined by
	\begin{equation*}
		B(G)_{(k,l),(i,j)} =
		\begin{cases}
			-a_{jl} &\mbox{if } i=k,\, j\neq l \text{ and } l\to j \in E \\
			a_{ki} & \mbox{if } i\neq k,\, j=l \text{ and } i\to k \in E \\
			a_{kk}-a_{ll} &\mbox{if } i=k \text{ and } j=l \\
			0 & \mbox{otherwise. }  \\
		\end{cases}
	\end{equation*}
	has full column rank.
\end{thm}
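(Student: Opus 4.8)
The plan is to translate the dimension criterion recalled in Section~\ref{sec:identifiability} — which, in the form most convenient here, asserts that $G$ admits an identifiable scaling reparametrization if and only if, for a generic parameter matrix $A\in\Theta_G$, the generic fiber of the coefficient (transfer-function) map $c$ coincides with a generic scaling orbit, equivalently the differential $dc_A$ satisfies $\dim\ker(dc_A)=n-1$ — into the injectivity of an explicit linear map whose matrix turns out to be $B(G)$.

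First I would set up the symmetry picture. The transfer function $e_1^\top(sI-A)^{-1}e_1$ is invariant under the similarity action $A\mapsto SAS^{-1}$ of the group $\mathcal{G}=\{S\in\GL_n:Se_1=e_1,\ e_1^\top S=e_1^\top\}$, whose Lie algebra is $\mathfrak{g}=\{X:Xe_1=0,\ e_1^\top X=0\}$, the matrices with vanishing first row and first column. By Assumption~\ref{ass:stronglyconnected}, a generic $A\in\Theta_G$ is controllable from and observable at compartment $1$, hence is a minimal realization; two minimal realizations with $b=c=e_1$ and the same transfer function are related by a unique element of $\mathcal{G}$. Consequently the fiber of $c$ through $A$ is, locally, the $\mathcal{G}$-orbit, and passing to tangent spaces (using $\ker dc_A=\ker d\tilde c_A\cap T_A\Theta_G$ for the ambient map $\tilde c$ on all realizations with $b=c=e_1$) gives
\[
\ker(dc_A)=\{[X,A]:X\in\mathfrak{g}\}\cap T_A\Theta_G=\{[A,X]:X\in\mathfrak{g},\ [A,X]\in T_A\Theta_G\},
\]
where $T_A\Theta_G$ is the space of matrices supported on the diagonal and on the edges of $G$.

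Next I would reduce to an injectivity statement. For generic $A$ the centralizer is the scalar matrices, which meet $\mathfrak{g}$ only in $0$, so $X\mapsto[A,X]$ is injective on $\mathfrak{g}$; hence $\dim\ker(dc_A)=\dim W$ with $W=\{X\in\mathfrak{g}:[A,X]\in T_A\Theta_G\}$. Splitting $\mathfrak{g}=\mathfrak{d}\oplus\mathfrak{g}'$ into its diagonal part $\mathfrak{d}$ (spanned by $E_{kk}$, $k\ge2$) and its off-diagonal part $\mathfrak{g}'$ (spanned by $E_{ij}$, $i\ne j$, $i,j\ge2$), one checks that $[\Delta,A]$ is automatically supported on the edges for diagonal $\Delta$, so $\mathfrak{d}\subseteq W$ and $W=\mathfrak{d}\oplus\ker(\Phi)$, where $\Phi$ sends $X\in\mathfrak{g}'$ to the tuple of entries $[A,X]_{kl}$ over the off-support off-diagonal positions $(k,l)$ (those with $k\ne l$ and $l\to k\notin E$). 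Since $\dim\mathfrak{d}=n-1$, the criterion $\dim\ker(dc_A)=n-1$ is equivalent to $\ker(\Phi)=0$, i.e.\ to $\Phi$ having full column rank.

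Finally I would identify the matrix of $\Phi$ with $B(G)$. A direct computation of $\partial[A,X]_{kl}/\partial X_{ij}=A_{ki}\delta_{lj}-\delta_{ki}A_{jl}$, with rows indexed by the off-support pairs $(k,l)$ and columns by the basis $(i,j)$ of $\mathfrak{g}'$, reproduces exactly the four cases defining $B(G)$: the value $-a_{jl}$ when $i=k$, $j\ne l$ and $l\to j\in E$; the value $a_{ki}$ when $i\ne k$, $j=l$ and $i\to k\in E$; and $a_{kk}-a_{ll}$ on the diagonal pairs $(k,l)=(i,j)$. The main obstacle is the bridge in the second paragraph: rigorously deducing from strong connectivity that a generic $A\in\Theta_G$ is a minimal realization with trivial centralizer, and that the local fiber of $c$ is precisely the $\mathcal{G}$-orbit, so that $\ker(dc_A)$ really is the commutator space above. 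Granting this, the remaining index bookkeeping and the entrywise match to $B(G)$ are routine.
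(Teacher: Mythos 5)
Your overall architecture matches the paper's: reduce the dimension criterion to $\dim\ker(d_Ac)=n-1$, identify the kernel with a commutator space, split off the diagonal part (your $\mathfrak{d}$ is the paper's $\calD_n$), and read off the resulting linear system as $B(G)x=0$; the entrywise computation $\partial[A,X]_{kl}/\partial X_{ij}=A_{ki}\delta_{lj}-\delta_{ki}A_{jl}$ and the final bookkeeping are correct and agree with the paper. The divergence, and the gap, is in how you establish the central identity
\[
\ker(d_Ac)=[\mathfrak{g},A]\cap T_A\Theta_G,\qquad \mathfrak{g}=\{X: Xe_1=0,\ e_1^{\top}X=0\},
\]
for generic $A\in\Theta_G$. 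The inclusion $\supseteq$ is the easy invariance direction; the inclusion $\subseteq$ is exactly the direction needed to conclude that full column rank of $B(G)$ implies an identifiable reparametrization, and your bridge via minimal realizations does not deliver it. Knowing that the \emph{set-theoretic} fiber of the ambient map $\tilde c$ through a minimal realization is the $\mathcal{G}$-orbit bounds the fiber dimension, but it does not bound $\dim\ker d_A\tilde c$ at points of the proper linear subvariety $\Theta_G$: the differential of $\tilde c$ can drop rank along a subvariety even where all nearby fibers have the expected dimension (generic smoothness is a statement about generic points of the \emph{ambient} space, and $\Theta_G$ may a priori sit inside the degeneracy locus). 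So "fiber $=$ orbit, hence $\ker(dc_A)$ is the commutator space" is a non sequitur as stated; this is not a routine rigorization but the place where the substantive work lives.

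The paper closes exactly this gap in Proposition~\ref{prop:kernelJac} and, crucially, Proposition~\ref{prop:Row1+Diagonal Zeros}. There one first shows $\ker d_Ac_0=[\gl_n,A]$ and $\ker d_Ac_1=\{C: C_1\in[\gl_{n-1},A_1]\}$ separately (each needs only that $A$, resp.\ $A_1$, has distinct eigenvalues, which holds generically on $\Theta_G$ because the diagonal is free), and then proves that any $X$ with $[X,A]_1\in[\gl_{n-1},A_1]$ can be normalized, modulo $Z_{\gl_n}(A)+\calD_n$, to have zero first row, first column \emph{and} diagonal. Killing the first row uses that projection of the centralizer to the first row is bijective; killing the first column reduces to showing $\{x_1a_1^{\top}\}\cap[\gl_{n-1},A_1]=\{0\}$, which is proved via the row eigenvectors of $A_1$ and the eigenvector-support Lemma~\ref{lem:EigenvectorNoZeros}. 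Note that even the inputs you want from realization theory (generic controllability and observability of $(A,e_1,e_1^{\top})$ over $\Theta_G$) amount to the same eigenvector-support statement. To repair your argument you would either need to prove this transversality $\ker d_Ac_0+\ker d_Ac_1=\gl_n$ at generic points of $\Theta_G$ directly, or import the paper's rank-one analysis; without one of these, your proof establishes only the "only if" direction of the theorem.
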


Next, in Section~\ref{sec:constructions} we search for constructions that can be applied to a given graph, such that the resulting graph has an identifiable scaling reparametrization. We briefly discuss some results of \cite{MS}, followed by several new constructions. The concept of an ear decomposition of a graph is discussed, and we prove the following theorem:
\begin{thm}\label{thm:CEDimpliesGood}
	Let $G$ be a graph that has a nontrivial ear decomposition, i.e. an ear decomposition without trivial ears whose initial cycle contains vertex 1, then $G$ has an identifiable scaling reparametrization.
\end{thm}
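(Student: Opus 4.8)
By Theorem~\ref{thm:GoodIffMfullrank} it suffices to prove that $B(G)$ has full column rank. Recall that the columns of $B(G)$ are indexed by the off-diagonal pairs $(i,j)$ with $i,j\neq 1$, and its rows by the non-edges, i.e.\ the off-diagonal pairs $(k,l)$ with $l\to k\notin E$; concretely, a kernel vector of $B(G)$ is a matrix $Z$, supported on the column positions, for which the commutator $[A,Z]$ vanishes at every non-edge, and full column rank means that no such nonzero $Z$ exists. Fix the given nontrivial ear decomposition $C_0,P_1,\dots,P_r$ with $1\in V(C_0)$, and put $G_i=C_0\cup P_1\cup\dots\cup P_i$, so $G_0=C_0$ and $G_r=G$. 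Each $G_i$ is strongly connected and contains vertex $1$, hence satisfies Assumptions~\ref{ass:i/o-compartment}--\ref{ass:leak}, and Theorem~\ref{thm:GoodIffMfullrank} applies to it; the plan is to show by induction on $i$ that $B(G_i)$ has full column rank. Since the column indices avoid compartment $1$, keeping vertex $1$ on the initial cycle guarantees that $1$ is never an internal vertex of an ear, which is exactly what lets the induction stay inside the hypotheses of the criterion.

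For the base case $G_0=C_0$, the transfer function of a single cycle already recovers all $m+1$ identifiable invariants --- the diagonal entries $a_{ii}$ (from the characteristic polynomials of $A$ and of $A$ with compartment $1$ deleted) together with the single cycle product --- so the dimension criterion holds and $B(C_0)$ has full column rank. Alternatively one checks this directly: since every vertex of the cycle has in- and out-degree $1$, the rows and columns of $B(C_0)$ are sparse and a column-saturating set of distinct rows can be read off from the cyclic structure.

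For the inductive step write $G'=G_{i-1}$ and $G=G_i=G'\cup P$, where $P$ is a nontrivial ear with new internal vertices $v_1,\dots,v_t$ ($t\geq 1$) and new edges $u\to v_1\to\cdots\to v_t\to w$, $u,w\in V(G')$. The decisive structural fact is that in $G$ each $v_s$ still has in- and out-degree exactly $1$, since the later ears have not yet been attached. Partition the index pairs of $B(G)$ into \emph{old} ones (both coordinates in $V(G')$) and \emph{new} ones (involving some $v_s$), and write
\[
B(G)=\begin{pmatrix} B(G') & C\\ D & E\end{pmatrix}.
\]
Two facts make this tractable. First, the old/old block is exactly $B(G')$: no new edge joins two old vertices, so the edge-type entries are unchanged, and the diagonal-type entries $a_{kk}-a_{ll}$ involve the same independent parameters by Assumption~\ref{ass:leak}. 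Second, because the only edges between $V(G')$ and the new vertices are $u\to v_1$ and $v_t\to w$, every entry of the coupling blocks $C$ and $D$ is one of the two fresh ear-edge parameters $a_{v_1u},a_{wv_t}$, while the fresh diagonal parameters $a_{v_sv_s}$ occur only inside $E$. Since $B(G')$ has full column rank by the inductive hypothesis, it remains to see that the new columns are independent from each other and from the old ones, and for this I would exhibit a column-saturating matching of $B(G)$: extend a column-saturating matching of $B(G')$ by assigning to each new column a distinct new non-edge row, walking along the ear and using the degree-$(1,1)$ structure of the $v_s$ to guarantee a system of distinct representatives. A leading-term argument in the fresh parameters (for instance assigning them generic, widely separated valuations) then shows that the minor indexed by this matching does not vanish, giving full column rank.

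The main obstacle is exactly this last step: verifying that the new columns cannot collapse. The delicate columns are those indexed by consecutive pairs such as $(v_{s+1},v_s)$: for these the diagonal-type entry sits in a row that is an \emph{edge} rather than a non-edge, so it is absent, and the column must instead be matched through one of the fresh edge parameters of the ear. Controlling these together with the coupling through the attachment vertices $u,w$, and proving that the chosen matching yields a non-cancelling term rather than merely a nonzero permanent, is where the real work lies. It is here that nontriviality is indispensable: a nontrivial ear contributes new internal vertices, hence fresh diagonal parameters $a_{v_sv_s}$ and fresh edges that supply the needed pivots, whereas a trivial ear adds a single edge between existing vertices, introduces no new vertex and no fresh parameters, and can in fact turn an identifiable model into an unidentifiable one --- so the conclusion genuinely fails without this hypothesis.
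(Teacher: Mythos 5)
Your overall strategy is exactly the paper's: induct over the ear decomposition, starting from the initial cycle (known to have the expected dimension) and adding one nontrivial ear at a time, noting that a nontrivial ear is precisely a line segment of length at least two attached to the previous graph. The paper's actual proof of the theorem is two lines long, because all of the work is outsourced to Proposition~\ref{prop:AddLineToG}, which states that attaching such a line segment to a graph with the expected dimension preserves the expected dimension.

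The genuine gap is that you attempt to prove that inductive step inline and then stop exactly where it gets hard, saying that verifying the non-vanishing of the chosen minor ``is where the real work lies.'' That is not a small omission: it is the entire content of Proposition~\ref{prop:AddLineToG}. Your proposed remedy --- exhibit a column-saturating matching and then run ``a leading-term argument in the fresh parameters'' --- is not adequate as stated. The paper explicitly warns (Example~\ref{ex:counterex}) that an $L$-saturating matching, i.e.\ a nonzero permanent, does \emph{not} imply full rank of $B(G)$; one must rule out cancellation among permutation terms. The paper's proof of Proposition~\ref{prop:AddLineToG} does this by a careful block decomposition of $\fixwidehat{B'}$: it identifies the diagonal blocks $D_p$ and $D'_p$ as shifted submatrices $A_{k,l,\alpha}$ of the parameter matrix (whose determinants are computed explicitly via Lemma~\ref{lem:A_lFullRank}), classifies \emph{all} possible nonzero entries outside the diagonal blocks (they are only $a_{n_1k}, a_{n_2n_1},\dots$), specializes $a_{ln_s}=a_{n_sn_{s-1}}=0$ to kill some couplings, and then argues degree by degree in the remaining fresh parameters that no other permutation term can reproduce the monomial $\det(\fixwidehat{B})\prod_p\det(D_p)\det(D'_p)$. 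None of that is present in your sketch, and your description of the coupling blocks (``every entry of $C$ and $D$ is one of the two fresh ear-edge parameters'') is where the cancellation danger actually lives. Your base case is also only sketched, but that is a minor issue compared with the missing inductive step. In short: right road map, but the theorem's proof in the paper is a corollary of a proposition you would still have to prove.
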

Interestingly, this result also holds for models with multiple inputs or outputs besides compartment 1, thus relaxing Assumption~\ref{ass:i/o-compartment}, and Assumption~\ref{ass:leak} can even be completely removed.
In the remainder of Section~\ref{sec:constructions} we present some computational results (obtained using \textsc{Mathematica}) on the size of certain classes of graphs for which an identifiable scaling reparametrization exists.

Finally, in Section~\ref{sec:conclusions} we summarize our results and we present some directions for future research.

\section{Identifiable scaling reparametrizations}
\label{sec:identifiability}
In this section we give a brief overview of definitions and
results from \cite{MS}. In
order to relate the observed data to the unknown model parameters, one
constructs an \emph{input-output equation} \cite{Bearup}:
\[ \psi(y,u,A) = 0. \]
This equation depends only on the parameter matrix $A$, input $u$, and output $y$, but it may also contain derivatives $\dot{y},\ddot{y},\ldots,\dot{u},\ddot{u},\ldots$.

Given a matrix $M$, let $M_1$ denote the submatrix obtained by deleting the first row and column of $M$. Recall that according to Assumption~\ref{ass:i/o-compartment} compartment 1 is the input-output compartment. For linear ODE models satisfying our assumptions, we obtain the following input-output equation:

\begin{thm}[{\cite[Thm.~2.2]{MS}}] \label{thm:i/o-equation}
    Let $G$ be a graph satisfying Assumptions \ref{ass:i/o-compartment}-\ref{ass:leak} and let $A=A(G)$. Then the input-output equation corresponding to \eqref{eq:ODEsystem} becomes
    \begin{equation}\label{eq:InputOutputEquation}
        y^{(n)}+c_1 y^{(n-1)} + \ldots + c_n y = u_1^{(n-1)} + d_1u_1^{(n-1)}+\ldots+d_{n-1}u_1
    \end{equation}
    where $c_1,\ldots,c_n$ and $d_1,\ldots,d_{n-1}$ are the coefficients of the characteristic polynomial of $A$ and $A_1$, respectively.
\end{thm}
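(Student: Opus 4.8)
The plan is to derive the input-output equation by eliminating the state variable $x$ from system~\eqref{eq:ODEsystem} using the Laplace transform, exactly as in the classical linear-systems approach. Taking Laplace transforms of $\dot{x}=Ax+u$ with $u=(u_1,0,\ldots,0)^T$ gives $(sI-A)\widehat{x}(s) = \widehat{u}(s) + x(0)$, and since the output is $y=x_1$, I would extract the first coordinate by Cramer's rule: $\widehat{y}(s)=\widehat{x}_1(s)$ is the ratio of the determinant of $(sI-A)$ with its first column replaced by the right-hand side, over $\det(sI-A)$. The key observation is that the denominator is the characteristic polynomial $\det(sI-A)=s^n+c_1 s^{n-1}+\cdots+c_n$, whose coefficients $c_1,\ldots,c_n$ are (up to sign conventions) the elementary symmetric functions encoded by the characteristic polynomial of $A$.

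Next I would identify the numerator. Expanding the Cramer determinant along the first column, the coefficient of $\widehat{u}_1(s)$ is the $(1,1)$-cofactor of $(sI-A)$, which equals $\det(sI_{n-1}-A_1)$ where $A_1$ is $A$ with its first row and column deleted. Thus the transfer function relating input to output has numerator $\det(sI-A_1)=s^{n-1}+d_1 s^{n-2}+\cdots+d_{n-1}$, giving precisely the coefficients $d_1,\ldots,d_{n-1}$ of the characteristic polynomial of $A_1$. Clearing denominators in the relation $\det(sI-A)\,\widehat{y}(s) = \det(sI-A_1)\,\widehat{u}_1(s) + (\text{initial-condition terms})$ and then inverting the Laplace transform (using that multiplication by $s$ corresponds to differentiation) yields the differential equation~\eqref{eq:InputOutputEquation}, where $s^k$ becomes the $k$-th derivative operator.

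The main obstacle, and the step requiring the most care, is the treatment of the initial conditions $x(0)$. A naive Laplace-transform computation produces extra polynomial terms in $s$ coming from $x(0)$, which at first glance seem to spoil the clean form of~\eqref{eq:InputOutputEquation}. The resolution is that the input-output \emph{equation} $\psi(y,u,A)=0$ is meant to hold as an identity in the derivatives of $y$ and $u_1$ regardless of initial data, so one argues that the initial-condition contributions either cancel or are absorbed by viewing the equation as a relation that must hold for all admissible trajectories. Concretely, one can assume zero initial conditions (or equivalently work with the transfer function in the frequency domain and then pass to the time-domain differential operator), since the differential-equation relating $y$ and $u_1$ is determined by the operators $\det(\tfrac{d}{dt}-A)$ and $\det(\tfrac{d}{dt}-A_1)$ independently of $x(0)$. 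Verifying that the leading coefficient on the right-hand side is indeed $1$ (matching the monic term $u_1^{(n-1)}$) is then immediate from monicity of the characteristic polynomial of $A_1$. Since this result is quoted directly from \cite[Thm.~2.2]{MS}, the expectation is that the authors simply cite it, and the sketch above records the standard derivation underlying it.
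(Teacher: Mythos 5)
Your derivation is correct and matches the standard argument behind the cited result: the paper itself offers no proof (it quotes \cite[Thm.~2.2]{MS} directly), but in Section~\ref{sec:conclusions} it records exactly the operator identity $\det(\partial I_n - A)\,y = \det(\partial I_{n-1}-A_1)\,u_1$ that your Cramer's-rule/cofactor computation establishes. Your handling of initial conditions is also sound, and is cleanest in the form you hint at last: applying the adjugate of $\partial I - A$ over the commutative ring $\RR[\partial]$ directly to $(\partial I - A)x = u$ gives the identity for all trajectories with no initial-condition terms appearing at all.
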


The input-output equation gives rise to a coefficient map $c$ that maps a parameter matrix $A\in \Theta_G$ to the coefficient vector of the input-output equation. This vector contains the coefficients of $y,\dot{y},\ddot{y},\ldots,u,\dot{u},\ddot{u},\ldots$  in terms of the parameter values $a_{ij}$.  From the above equation we obtain the coefficient map $c:\Theta_G\to \RR^{2n-1}$ given by
        \[c(A):=(c_1,\ldots,c_n,d_1,\ldots,d_{n-1}),\]
where $c_1,\ldots,c_n,d_1,\ldots,d_{n-1}$ are the coefficients of equation~\eqref{eq:InputOutputEquation}. This map is called the \emph{double characteristic polynomial map}. Because of Theorem~\ref{thm:i/o-equation} the matrices $A$ and $A_1$ and the double characteristic polynomial map $c$ will play a major role in the rest of this paper.

Using $c$ we can define identifiability in mathematical terms. Suppose
two distinct parameter matrices $A,A'$ yield the input-output data,
i.e. $c(A)=c(A')$. Then it is impossible to tell from only observing
the relations among input and output whether the parameter values
corresponding to the model should be those of $A$ or of $A'$. 
The model is called {\em globally identifiable} if this does not
happen, i.e., if $c$ is injective
on the parameter space. This is typically too strong a condition;
as in \cite{MS}, we will reserve the predicate {\em identifiable} for
the following notion, which is also called {\em generically locally
identifiable}: if a map $f$ from a parameter space $\Theta$ captures
the observable quantities of a model, then the model is called {\em
identifiable} if there is a dense subset $U$ of $\Theta$ such that each
$A \in U$ has a neighborhood on which $f$ is injective. 

However, taking $\Theta=\Theta_G$ and $f=c$, Meshkat and Sullivant
show that the resulting model is {\em not} identifiable unless $n=1$.
One approach when dealing with unidentifiable models is to restrict the
parameter space to a lower-dimensional space. Another approach, taken
here and in \cite{MS}, is to look for
combinations of parameters that {\em are} identifiable. Motivated by the
origin of the model corresponding to $G$, they restrict to combinations that take the form
\[ 
b_{ij}=a_{ij} f_i(A)/f_j(A)
\] 
where $f_i:\Theta_G \to \RR, i=1,\ldots,n$
are functions. Then the $b_{ij}$ have a natural interpretation as the
rates of the dynamical system with scaled variables
\[X_i = f_i(A) x_i.\]
To achieve that $X_1$ equals the quantity in the input-output
compartment $1$, we further impose that $f_1(A)=1$ for all
$A$. Together, the functions $f_i,\ i \in [n]$ are called a {\em
scaling reparameterization}. Since the matrix $B=(b_{ij})_{ij}$
is obtained from $A$ by conjugating with a diagonal matrix, we have
$c(B)=c(A)$ and $b_{ii}=a_{ii}$ for all $i \in [n]$. Thus $c$ induces an
$\RR^{2n-1}$-valued function $\overline{c}$ on the image $\Theta'$ of the map that sends $A$ to
$B$. If $\overline{c}:\Theta' \to \RR^{2n-1}$ is identifiable, then the scaling reparameterization
is called identifiable. If an identifiable scaling reparametrization exists,
then it can always be chosen such that the $f_i$ are in fact monomials
in the entries of $A$ \cite{MS}.

\begin{lem}[{\cite[Cor.~2.13]{MS}}] \label{lem:m<=2n-2}
If $G$ has an identifiable scaling reparametrization, then $G$ has at most $2n-2$ edges.
\end{lem}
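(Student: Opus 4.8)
The plan is to convert the existence of an identifiable scaling reparametrization into a lower bound on the dimension of the image of the coefficient map $c$, and to exploit the torus symmetry of $c$. Since $\im c\subseteq\RR^{2n-1}$ we trivially have $\dim\im c\le 2n-1$, so it suffices to prove that the reparametrization forces $\dim\im c\ge m+1$; combining the two then gives $m+1\le 2n-1$, i.e. $m\le 2n-2$. The bound $\dim\im c\ge m+1$ will in turn follow, via the fiber-dimension theorem and $\dim\Theta_G=m+n$, from showing that the generic fiber of $c$ has dimension at most $n-1$.

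The geometric heart is the action of the torus $T=(\RR^*)^{n-1}$ on $\Theta_G$ by conjugation with $\diag(1,t_2,\ldots,t_n)$, which scales $a_{ij}\mapsto(t_i/t_j)a_{ij}$ and fixes the diagonal. Because conjugation preserves the characteristic polynomials of both $A$ and $A_1$, the map $c$ is $T$-invariant. The conjugating matrix $\diag(f_1(A),\ldots,f_n(A))$ defining the reparametrization lies in $T$ (since $f_1\equiv 1$), so the map $\Phi\colon A\mapsto B$ sends each $A$ into its own $T$-orbit; in particular $\Theta'=\im\Phi\subseteq\Theta_G$ and $c=\overline c\circ\Phi$, using $c(B)=c(A)$ and $\overline c=c|_{\Theta'}$. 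The key structural observation is that every fiber $\Phi^{-1}(B)$ is contained in the single $T$-orbit of $B$: from $B=\diag(f(A))\,A\,\diag(f(A))^{-1}$ we get $A=\diag(f(A))^{-1}B\,\diag(f(A))$ with $\diag(f(A))\in T$, whence $\dim\Phi^{-1}(B)\le\dim(T\cdot B)\le n-1$.

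With these in hand I would bound the generic fiber of $c$. Writing $c=\overline c\circ\Phi$, the generic fiber $c^{-1}(c(A))$ equals $\Phi^{-1}(\overline c^{-1}(c(A)))$; identifiability of $\overline c$ makes $\overline c^{-1}(c(A))$ finite for generic $A$, so this fiber is a finite union of sets $\Phi^{-1}(B)$, each of dimension at most $n-1$, and therefore has dimension at most $n-1$. The fiber-dimension theorem then gives $\dim\im c=\dim\Theta_G-(\text{generic fiber dim})\ge(m+n)-(n-1)=m+1$, as required. I expect the main obstacle to be the careful justification of the generic dimension statements: that a generic $T$-orbit really has dimension $n-1$ — this is exactly where strong connectivity enters, forcing the generic stabilizer $\{t : t_i=t_j \text{ along every edge},\, t_1=1\}$ to be trivial — and that "generically locally injective" indeed makes the generic fibers of $\overline c$ finite, so that the fiber-dimension theorem applies on a dense open locus. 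Realizing the $f_i$ as monomials (as permitted above) makes $\Phi$ and $c$ genuine morphisms, which legitimizes these algebraic-geometric dimension counts.
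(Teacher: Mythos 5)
Your argument is correct, but it is not the route available inside this paper: Lemma~\ref{lem:m<=2n-2} is imported from Meshkat--Sullivant as a citation, and the only derivation the paper itself offers is the counting remark after Theorem~\ref{thm:GoodIffMfullrank} (the matrix $B(G)$ has $|L|=(n-1)(n-2)$ columns and $|R|=n^2-n-m$ rows, so full column rank forces $|R|\ge|L|$, i.e.\ $m\le 2n-2$), which presupposes the entire rank criterion. What you give instead is a self-contained dimension count that amounts to the ``easy half'' of the dimension criterion (Theorem~\ref{thm:DimCriterion}): the fibers of $\Phi\colon A\mapsto B$ sit inside torus orbits of dimension at most $n-1$, identifiability makes the generic fiber of $\overline{c}$ finite, hence $\dim\im c\ge(m+n)-(n-1)=m+1$, and the trivial bound $\dim\im c\le 2n-1$ finishes. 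Two small remarks. First, the ``obstacle'' you flag about generic $T$-orbits having dimension exactly $n-1$ is a non-issue for this direction: you only use $\dim(T\cdot B)\le\dim T=n-1$, which is automatic; strong connectivity and trivial stabilizers are only needed for the reverse inequality $\dim\im c\le m+1$, which this lemma does not require. Second, the step ``generically locally injective $\Rightarrow$ generic fiber finite'' should be carried out in the real semialgebraic category (or with a careful complexification), since an isolated real point can lie on a positive-dimensional complex fiber; the semialgebraic fiber-dimension theorem handles this exactly as you indicate. Granting Theorem~\ref{thm:DimCriterion} as a black box, the lemma is a one-liner ($m+1=\dim\im c\le 2n-1$); your proof re-derives the needed half of that criterion, which is more work but also makes the mechanism (torus orbits as the unavoidable fibers) explicit.
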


We focus on determining whether or not an identifiable scaling reparametrization exists for models satisfying our assumptions. Because of Lemma~\ref{lem:m<=2n-2} we only need to consider graphs which have at most $2n-2$ edges, so from now on we assume $m\leq 2n-2$.
The main result of Meshkat and Sullivant gives a criterion to decide if an identifiable scaling reparametrization exists:

\begin{thm}[{\cite[Thm.~1.2]{MS}}] \label{thm:DimCriterion}
A graph $G$ has an identifiable scaling reparametrization if and only if the dimension of the image of the double characteristic polynomial map is $m+1$.
\end{thm}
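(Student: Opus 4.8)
The plan is to exploit the torus of diagonal conjugations, under which the coefficient map $c$ is invariant, and to show that $m+1$ is exactly the maximal possible value of $\dim \im c$, attained precisely when an identifiable scaling reparametrization exists. Let $T=(\RR^*)^n$ act on $\Theta_G$ by $D\cdot A = DAD^{-1}$; this preserves the zero pattern and the diagonal, so it maps $\Theta_G$ to itself, and it fixes the characteristic polynomials of both $A$ and $A_1$ (conjugation by a diagonal matrix commutes with deleting the first row and column). Hence $c$ is $T$-invariant. By Assumption~\ref{ass:stronglyconnected} the generic stabilizer is the scalar subgroup $\RR^* I$, because $DAD^{-1}=A$ forces $t_i=t_j$ whenever $a_{ij}\neq 0$, and strong connectivity then equates all $t_i$ for generic $A$. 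Thus the generic orbit has dimension $n-1$, the field of $T$-invariant rational functions on $\Theta_G$ has transcendence degree $(m+n)-(n-1)=m+1$, and since each coordinate of $c$ is such an invariant we obtain the universal bound $\dim\im c \le m+1$.

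I would first record two facts valid for every scaling reparametrization. Since $c(B)=c(A)$, the induced map $\overline{c}$ is just the restriction $c|_{\Theta'}$, and from $c=\overline{c}\circ(A\mapsto B)$ we get $\im\overline{c}=\im c$; in particular $\dim\im c$ depends only on $G$. Moreover $\dim\Theta'\ge m+1$ always: the $n$ diagonal entries $b_{ii}=a_{ii}$ together with $m-n+1$ products $\prod_{(j\to i)\in C_k} a_{ij}$, taken over directed cycles $C_1,\dots,C_{m-n+1}$ forming a basis of the cycle space of $G$, are all $T$-invariant and satisfy $g(B)=g(A)$, hence descend to functions on $\Theta'$. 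As monomials in the two disjoint groups of variables (diagonal entries and edge parameters) with linearly independent exponent vectors, these $m+1$ functions are algebraically independent, forcing $\dim\Theta'\ge m+1$. For the forward implication, assume an identifiable scaling reparametrization exists: generic local injectivity of $\overline{c}$ makes its generic fibres finite, so $\dim\im c=\dim\im\overline{c}=\dim\Theta'\ge m+1$, and combined with the universal bound this gives $\dim\im c=m+1$.

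For the converse, assume $\dim\im c=m+1$; I would construct a reparametrization realizing $\dim\Theta'=m+1$ and then finish by a dimension count. Take the $f_i$ to be Laurent monomials in the edge parameters only, so that in logarithmic edge-coordinates the map $A\mapsto B$ sends $\alpha\mapsto (I+\partial M)\alpha$, where $\partial\colon\RR^n\to\RR^E$ is the coboundary operator $(\partial\tau)_{j\to i}=\tau_i-\tau_j$ and $M$ records the monomial exponents. Since $\partial$ is injective modulo the constants $\RR\mathbf{1}$, one can choose $M$ with $M\partial\equiv -\id$ on $\RR^n/\RR\mathbf{1}$; then $\im\partial\subseteq\ker(I+\partial M)$, so $\rk(I+\partial M)=m-(n-1)$, and after normalizing $f_1=1$ (which only rescales all $f_i$ by a common factor and leaves $B$ unchanged) the edge part of $\Theta'$ has dimension $m-n+1$. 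As the diagonal entries contribute a further free $n$ dimensions in disjoint variables, $\dim\Theta'=m+1$. Finally $\overline{c}=c|_{\Theta'}\colon\Theta'\to\im c$ is a dominant morphism of irreducible varieties of equal dimension $m+1$, hence generically finite, so its Jacobian has full rank on a dense open set and the inverse function theorem yields generic local injectivity, i.e. an identifiable scaling reparametrization.

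The main obstacle is the converse construction: turning the bare equality $\dim\im c=m+1$ into an explicit monomial reparametrization requires the passage to logarithmic coordinates and the cohomological inversion of the coboundary operator, together with a check that the resulting slice $\Theta'$ meets each generic $T$-orbit in only finitely many points. A secondary technical point is the transition from the algebraic statement ``generically finite'' to the analytic statement ``generically locally injective'' over $\RR$, which is handled by the constant-rank and inverse function theorems on the dense locus where the Jacobian of $c$ attains rank $m+1$.
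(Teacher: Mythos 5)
The paper itself does not prove Theorem~\ref{thm:DimCriterion}: it is imported from Meshkat--Sullivant \cite{MS} as a black box, so the only meaningful comparison is with their original argument. Your proof is sound, and it is in substance a reconstruction of that argument: the invariance of $c$ under the torus of diagonal conjugations with generic stabilizer $\RR^* I$ (this is where Assumption~\ref{ass:stronglyconnected} enters) gives the universal bound $\dim \im c \le m+1$; the diagonal entries together with products over a basis of directed cycles give $m+1$ algebraically independent functions descending to $\Theta'$, hence $\dim\Theta'\ge m+1$ for \emph{every} scaling reparametrization, which settles the forward direction; and your choice of $M$ with $M\partial\equiv-\id$ modulo constants is exactly the spanning-tree normalization used in \cite{MS} (scale so that the parameters on the edges of a spanning tree become $1$) written in coboundary language. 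Each coordinate of your plan checks out: $q(I+\partial M)=q$ on $\RR^E/\im\partial$ does pin the rank at $m-n+1$, and generic finiteness of the dominant map $\overline{c}\colon\Theta'\to\overline{\im c}$ between varieties of equal dimension $m+1$ does yield generic local injectivity.

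Three points you gloss should be made explicit. First, the existence of $m-n+1$ \emph{directed} cycles whose incidence vectors are linearly independent is not automatic for digraphs; it holds because in a strongly connected digraph the directed cycles span the full cycle space of the underlying graph (a standard fact, provable via ear decompositions, and a second place where Assumption~\ref{ass:stronglyconnected} is needed). Second, for the $f_i$ to be Laurent monomials you need an \emph{integral} $M$; this exists because the incidence matrix of a spanning tree is unimodular, so you may define $M$ using tree edges only (alternatively, drop monomiality and allow rational $f_i$, which the definition permits). Third, two real-algebraic transitions deserve a line each: in the forward direction, ``generically locally injective $\Rightarrow$ generic fibers finite'' follows since a positive-dimensional fiber is locally infinite at each of its points, contradicting local injectivity on a dense set; in the converse, rather than invoking generic \'etaleness you can use the generic-rank fact the paper already quotes (\cite[Prop.~14.4]{Harris}): the Jacobian of $\overline{c}$ has generic rank $\dim\overline{\im \overline{c}}=m+1=\dim\Theta'$, so $\overline{c}$ is an immersion, hence locally injective, near a dense set of smooth points of $\Theta'$. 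With these details supplied your proof is complete and faithful to the original \cite{MS} argument.
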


We will refer to Theorem~\ref{thm:DimCriterion} as the \emph{dimension criterion}. This criterion reduces the problem of deciding whether or not an identifiable scaling reparametrization exists to calculating the dimension of the image of the double characteristic polynomial map.
We observe that the dimension criterion allows us to check whether $G$ has an identifiable scaling reparametrization by calculating the rank of the differential (or Jacobian) $d_A c$ of the map $c$ at a sufficiently general point $A\in \Theta_G$. This follows from the fact $c$ is a polynomial map, surjective on $\im c$, hence for $A$ in an open dense subset of $\Theta_G$ the rank of $d_A c$ equals the dimension of the image of $c$ \cite[Prop.~14.4]{Harris}.

\begin{de}
We say that $G$ \emph{has the expected dimension} if the dimension of the image of the double characteristic polynomial map equals $m+1$.
\end{de}

In other words, an identifiable scaling reparametrization exists if and only if $G$ has the expected dimension. If this is the case, the scaling reparametrization can be found using the algorithm presented in \cite{MS}.


\section{Reformulating the dimension criterion}
\label{sec:newcriterion}
So far, we have discussed all relevant definitions and earlier
results. From now on we take a new approach, starting with a
reformulation of the dimension criterion that was given in
Theorem~\ref{thm:DimCriterion}. This leads to our main result: an
alternative criterion to test whether a given graph has the expected
dimension. In other words, we determine if an identifiable scaling
reparametrization exists. This criterion can be verified in
probabilistic polynomial time.

So far we have been working over the real numbers, since all parameters are assumed to be real. However, $\RR$ lies Zariski dense in $\CC$ and the dimension of the image of the double characteristic polynomial map $c$ is determined by the rank of its Jacobian at a sufficiently general point. Therefore we might as well work over the complex numbers to determine the dimension of the image of $c$. From now on let $\Theta_G\subseteq \CC^{n\times n}$ and let $c:\Theta_G\to \CC^{2n-1}$. Working over the complex numbers simplifies issues concerning diagonalizability, which we shall be using later on.

We consider the matrix group $\GL_n(\CC)$, the \emph{general linear group}, consisting of all invertible $n\times n$ matrices over $\CC$. For simplicity $\GL_n$ is written instead of $\GL_n(\CC)$. The tangent space of $\GL_n$ at the identity is its \emph{Lie algebra} $\gl_n$. This space consists of all $n\times n$ complex matrices, with the commutator serving as the Lie bracket: $[X,A]:=XA-AX$. We write $\gl_n$ instead of $\CC^{n\times n}$ to emphasize that it arises as the tangent space of $\GL_n$.

Furthermore, given a matrix $A\in \CC^{n\times n}$, the \emph{centralizer} of $A$ in $\gl_n$ is denoted $Z_{\gl_n}(A)$. It contains all $X\in \gl_n$ that commute with $A$, i.e. $[X,A]=0$.

\subsection{The kernel of the differential map} \label{subsec:kernelJac}
For sufficiently general $A\in \Theta_G$ we have the following chain of equalities:
\begin{equation*}\label{eq:DimIm(c)}
    \dim \im c = \rk (d_A c) = m+n - \dim \ker(d_A c),
\end{equation*}
where $d_A c$ denotes the differential (or Jacobian) of $c$ at the point $A$.
The first equality was already mentioned in the previous section and the second equality follows directly from the rank-nullity theorem.
Thus we have shown the following lemma.
\begin{lem}
    The dimension of the image of the double characteristic polynomial map $c$ equals $m+1$ if and only if the dimension of the kernel of the differential $d_A c$ equals $n-1$ for sufficiently general $A\in \Theta_G$.
\end{lem}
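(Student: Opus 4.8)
The plan is to read off the equivalence directly from the chain of equalities established just above, namely
\begin{equation*}
\dim \im c = \rk(d_A c) = m+n-\dim\ker(d_A c),
\end{equation*}
valid for sufficiently general $A\in\Theta_G$. The only substantive ingredients are already in place: that $c$ is a polynomial map whose generic Jacobian rank computes $\dim \im c$ (by \cite[Prop.~14.4]{Harris}), and that the parameter space has dimension $m+n$ (guaranteed by Assumption~\ref{ass:leak}, which makes all $m+n$ nonzero entries of $A$ independent). Given this, the lemma is a purely arithmetic rearrangement, and I would not expect any genuine obstacle.

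Concretely, I would first fix a sufficiently general $A$ so that both $\dim\im c = \rk(d_A c)$ and the rank--nullity identity $\rk(d_A c) = m+n - \dim\ker(d_A c)$ hold simultaneously. The displayed chain then gives $\dim\im c = m+n-\dim\ker(d_A c)$, and I would simply observe the biconditional
\begin{equation*}
\dim\im c = m+1 \quad\Longleftrightarrow\quad m+n-\dim\ker(d_A c)=m+1 \quad\Longleftrightarrow\quad \dim\ker(d_A c)=n-1,
\end{equation*}
where the last step cancels $m$ and solves for the kernel dimension. Both implications follow at once, so there is no need to argue the two directions separately.

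The one point to be careful about, rather than a real difficulty, is the quantifier on $A$: the equality $\dim\im c = \rk(d_A c)$ holds only on a dense open subset of $\Theta_G$, and $\dim\ker(d_A c)$ is likewise constant only for generic $A$. This is why the statement is phrased for sufficiently general $A$, and why the equivalence then holds for every such $A$ at once, both sides being determined by the same generic rank. All the analytic content—identifying the generic Jacobian rank with the image dimension, and the dimension count $\dim\Theta_G=m+n$—was already settled before the lemma, so the proof reduces entirely to the arithmetic above.
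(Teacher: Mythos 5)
Your proposal is correct and follows exactly the paper's route: the paper establishes the same chain $\dim\im c = \rk(d_A c) = m+n-\dim\ker(d_A c)$ via the generic-rank fact and rank--nullity, and then treats the lemma as the immediate arithmetic consequence. Your added care about the quantifier on $A$ is consistent with the paper's phrasing and introduces nothing new.
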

Using this result, we can determine whether a given model has the expected dimension by calculating the rank of the differential $d_A c$. In order to classify which models have the expected dimension, we need to know what the kernel of $d_A c$ looks like. By definition of the double characteristic polynomial map $c$, the kernel of $d_A c$ is equal to the intersection of the two kernels corresponding to the differentials of the characteristic polynomials of $A$ and $A_1$. Using this observation we will derive the form of $\ker (d_A c)$.

\begin{prop} \label{prop:kernelJac}
    For sufficiently general $A\in \Theta$, the kernel of the differential map $d_A c:\Theta_G \to \CC^{2n-1}$ is given by
    \begin{equation*}
        \left\{ C \in \Theta_G \Bigm|
        \begin{array}{llll} \exists\, X\in \gl_n: &[X,A] &= &C \\
        \exists\, Y\in \gl_{n-1}: &[Y,A_1] &=&C_1 \end{array} \right\}
    \end{equation*}
    where $A_1,C_1$ denote the matrices obtained by removing the first row and the first column from $A,C$, respectively.
\end{prop}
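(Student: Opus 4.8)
The plan is to split the double characteristic polynomial map into its two halves and to reduce the statement to a single structural fact about characteristic polynomials. Write $\chi_n\colon \gl_n\to\CC^n$ for the map sending a matrix to the coefficients $(c_1,\dots,c_n)$ of its characteristic polynomial, and $\chi_{n-1}\colon\gl_{n-1}\to\CC^{n-1}$ for the analogous map in size $n-1$. By Theorem~\ref{thm:i/o-equation} the map $c$ is the restriction to $\Theta_G$ of $A\mapsto(\chi_n(A),\chi_{n-1}(A_1))$. Since $\Theta_G$ is a linear subspace of $\gl_n$, its tangent space at every point is $\Theta_G$ itself and $d_Ac$ is simply the restriction of the corresponding differential; moreover the map $A\mapsto A_1$ is linear, so its differential sends $C$ to $C_1$. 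Hence, by the chain rule, for $C\in\Theta_G$ we have $d_Ac(C)=0$ if and only if $d_A\chi_n(C)=0$ and $(d_{A_1}\chi_{n-1})(C_1)=0$; that is, $\ker(d_Ac)$ consists of those $C\in\Theta_G$ with $C\in\ker(d_A\chi_n)$ and $C_1\in\ker(d_{A_1}\chi_{n-1})$.

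The key step I would isolate as a lemma: for a matrix $M\in\gl_k$ with distinct eigenvalues, $\ker(d_M\chi_k)=\im(\operatorname{ad}_M)=\{[X,M]:X\in\gl_k\}$. For the inclusion $\im(\operatorname{ad}_M)\subseteq\ker(d_M\chi_k)$, note that $\chi_k$ is constant on conjugacy classes, so differentiating $g\mapsto gMg^{-1}$ at $g=\id$ shows that every $[X,M]$ is annihilated by $d_M\chi_k$. For the reverse inclusion I would argue by dimensions. On one side, $\dim\im(\operatorname{ad}_M)=k^2-\dim Z_{\gl_k}(M)=k^2-k$, since a matrix with distinct eigenvalues is regular and its centralizer, the algebra of polynomials in $M$, has dimension $k$. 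On the other side, expressing the coefficients of $\chi_k$ through the power sums $\operatorname{tr}(M^j)$ (an invertible polynomial change of coordinates in characteristic $0$, so with the same differential kernel), whose differentials are $C\mapsto j\operatorname{tr}(M^{j-1}C)$, one sees that $\ker(d_M\chi_k)$ is the orthocomplement under the trace form of $\operatorname{span}(I,M,\dots,M^{k-1})=Z_{\gl_k}(M)$, again of dimension $k^2-k$. As one space is contained in the other and they have equal dimension, they coincide. (Equivalently, $\operatorname{tr}([X,M]Z)=\operatorname{tr}(X[M,Z])$ shows directly that $\im(\operatorname{ad}_M)$ and $Z_{\gl_k}(M)$ are mutually orthogonal under the trace form.)

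Applying this lemma to $M=A$ and to $M=A_1$ rewrites the two kernel conditions as $C\in\im(\operatorname{ad}_A)$ and $C_1\in\im(\operatorname{ad}_{A_1})$, i.e. as the existence of $X\in\gl_n$ with $[X,A]=C$ and of $Y\in\gl_{n-1}$ with $[Y,A_1]=C_1$, which is exactly the asserted description. It remains to justify the phrase ``sufficiently general'': the lemma requires both $A$ and $A_1$ to have distinct eigenvalues, which are Zariski-open conditions given by nonvanishing of the two discriminants. These are nonvacuous on $\Theta_G$ because all diagonal entries $a_{ii}$ (including those of $A_1$, for $i\ge 2$) are free parameters under Assumption~\ref{ass:leak}; specializing $A$ so that its diagonal entries are distinct and dominate the off-diagonal ones forces both $A$ and $A_1$ to be regular semisimple, so neither discriminant vanishes identically.

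The main obstacle is precisely the lemma identifying $\ker(d_M\chi_k)$ with $\im(\operatorname{ad}_M)$: the rest is bookkeeping with the chain rule and a genericity check, whereas this is where the hypothesis of distinct eigenvalues is genuinely used, both to fix the centralizer dimension at $k$ and to ensure that the conjugacy class through $M$ fills out the whole fiber of $\chi_k$ to first order.
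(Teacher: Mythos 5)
Your proof is correct and follows the same overall route as the paper: split $c$ into the two characteristic-polynomial coefficient maps, show that each kernel equals the image of the corresponding adjoint map $\operatorname{ad}_M = [\,\cdot\,,M]$, with one inclusion coming from conjugation-invariance of the characteristic polynomial and the reverse inclusion from a dimension count using $\dim Z_{\gl_k}(M)=k$ for regular $M$. The one place you genuinely diverge is the computation of $\dim\ker(d_M\chi_k)$: the paper gets $k^2-k$ by appealing to generic surjectivity of the coefficient map $\gl_k\to\CC^k$ (so that its differential generically has full rank $k$), whereas you pass to power sums via the Newton identities and identify $\ker(d_M\chi_k)$ exactly as the trace-form orthocomplement of $\operatorname{span}(I,M,\dots,M^{k-1})$. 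Your version is pointwise — valid at any $M$ with distinct eigenvalues — and so transfers to a general point of the subvariety $\Theta_G$ without any further argument, which is slightly cleaner than the paper's appeal to genericity over all of $\gl_k$; your explicit check that the distinct-eigenvalue loci are nonempty open subsets of $\Theta_G$ (using the free diagonal entries from Assumption~\ref{ass:leak}) also makes the meaning of ``sufficiently general'' precise, something the paper leaves implicit.
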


\begin{re} \label{re:Lie}
Suppose that $X$ is an $n \times n$-matrix with zeroes in the first row
and column except on position $(1,1)$ and that the linear map $A \mapsto
[X,A]$ maps $\Theta_G$ into itself. Then taking $Y=X_1$ we find that
$[X,A]$ lies in the set in the proposition for each $A \in \Theta_G$. The
set of such $X$ form a Lie algebra containing the diagonal matrices and
hence spanned by some subset of the elementary matrices $E_{ij}$, and this
Lie algebra can be easily determined from the graph $G$. This Lie algebra
captures Lie point symmetries of the ODE, as in \cite{Merkt}. But the
set in the proposition is often larger than (the image of) this algebra,
and indeed does not correspond to any Lie algebra acting on the parameter space.
\end{re}

\begin{proof}
We begin by writing $c(A)=[c_0(A)|c_1(A)]$, where $c_0,c_1$ are the coefficient maps corresponding to the characteristic polynomials of $A,A_1$, respectively. Let $d_A c_0 \in \CC^{n\times (n+m)}$ and $d_{A} c_1\in \CC^{(n-1)\times(n+m)}$ be the differential maps of $c_0,c_1$, respectively, containing the partial derivatives with respect to the model parameters. The differential of $c$ can be written as
\[d_A c = \left[
    \begin{array}{c}
        (d_A c_0)^T \\ \vspace{-3mm} \\ \hline \vspace{-3mm} \\ (d_{A} c_1)^T
    \end{array} \right],\]
which shows that $X$ lies in the kernel of $d_A c$ if and only if $X$ lies in both the kernel of $d_A c_0$ and $d_{A} c_1$.

Consider the map $c_0:\CC^{n\times n} \to \CC^n$ and define the map $\psi:\GL_n \to \CC^{n\times n}$ that sends $g$ to $gAg^{-1}$ for some fixed $A\in \CC^{n\times n}$. Define the composition $\phi:=c_0 \circ \psi$, which is of the form
\[
    \begin{array}{rcccl}
        \GL_n &\xrightarrow{\quad \psi \quad}& \CC^{n\times n} &\xrightarrow{\quad c_0 \quad}& \CC^n \\
        g &\longmapsto & gAg^{-1} & \longmapsto & (a_1,\ldots,a_n)
    \end{array}
\]
where $a_1,\ldots,a_n$ are the coefficients of the characteristic polynomial of $gAg^{-1}$. The characteristic polynomial of $A$ is invariant under conjugation by an element of $\GL_n$, hence $a_1,\ldots,a_n$ are equal to the coefficients of the characteristic polynomial of $A$. This implies that the composition $c_0 \circ \psi$ is in fact a constant map sending each $g\in \GL_n$ to the fixed point $(a_1,\ldots,a_n)\in \CC^n$. Therefore, the differential $d\phi$ is identically zero; in particular $d_{I}\phi=0$ for the $n\times n$-identity matrix $I$.

By the chain rule, $d_I\phi = (d_A c_0 )(d_I \psi)$, so 
$d_I\phi = 0$ implies that the image of $d_I\psi$
is contained in the kernel of $d_A c_0$. For $X\in \gl_n$ we have
$(d_I\psi)(X) = [X,A]$, hence the image of $d_I \psi$ equals
$[\gl_n,A]$. We conclude that $[\gl_n,A]\subseteq \ker (d_A
c_0)$.

On the other hand, since $c_0$ is a surjective polynomial map from $\CC^{n\times n}$ to $\CC^n$, the dimension of the kernel of $d_A c_0$ is generically equal to $n^2-n$. The kernel of the Lie bracket $[\cdot,A]$ is precisely the centralizer of $A$ in $\gl_n$, which has dimension $n$ according to Lemma~\ref{lem:dim-centralizer} below. This shows that the dimension of $[\gl_n,A]$ equals $n^2-n$, and because the kernel is a linear subspace of $\Theta_G$ we conclude that $\ker (d_A c_0) = [\gl_n,A]$.

The same argument applies to the map $c_1:\CC^{n\times n} \to \CC^{n-1}$, showing that $\ker (d_A c_1) = \{ C \mid C_1 \in [\gl_{n-1},B] \}$ for any $A\in \Theta_G$. Combining these results, we see that $C$ lies in the kernel of $d_A c$ if and only if $C$ lies in $[\gl_n,A]$ and $C_1$ lies in $[\gl_{n-1},A_1]$. In other words, there exist $X\in \gl_n$ and $Y\in \gl_{n-1}$ such that $[X,A]=C$ and $[Y,A_1]=C_1$.
\end{proof}

\begin{lem} \label{lem:dim-centralizer}
For sufficiently general $A\in \CC^{n\times n}$, the centralizer of $A$ in $\gl_n$, denoted by $Z_{\gl_n}(A)$, has dimension $n$.
\end{lem}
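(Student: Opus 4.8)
The plan is to interpret ``sufficiently general'' as the Zariski-open condition that $A$ has $n$ distinct eigenvalues, and then to compute the centralizer by diagonalizing. First I would observe that $Z_{\gl_n}(A)$ is exactly the kernel of the linear map $\operatorname{ad}_A\colon\gl_n\to\gl_n$, $X\mapsto[X,A]$, so the claim is equivalent to $\rk(\operatorname{ad}_A)=n^2-n$ for generic $A$. To make ``generic'' precise I would note that the discriminant of the characteristic polynomial of $A$ is a polynomial in the entries of $A$ which vanishes precisely when $A$ has a repeated eigenvalue. This polynomial is not identically zero, as witnessed by $\diag(1,2,\ldots,n)$; hence the locus of matrices with $n$ distinct eigenvalues is a nonempty Zariski-open, and therefore dense, subset of $\CC^{n\times n}$. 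This is exactly the notion of ``sufficiently general'' intended in the statement, and it is here that working over $\CC$ is convenient, as already remarked in the excerpt.

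For such an $A$ I would use diagonalizability directly. Writing $A=gDg^{-1}$ with $D=\diag(\lambda_1,\ldots,\lambda_n)$ and the $\lambda_i$ pairwise distinct, conjugation by $g$ is a linear automorphism of $\gl_n$ that carries $Z_{\gl_n}(D)$ isomorphically onto $Z_{\gl_n}(A)$, so it suffices to compute $\dim Z_{\gl_n}(D)$. The equation $XD=DX$ reads entrywise as $(\lambda_j-\lambda_i)X_{ij}=0$, which forces $X_{ij}=0$ whenever $i\neq j$. Thus $Z_{\gl_n}(D)$ is precisely the space of diagonal matrices, of dimension $n$, and consequently $\dim Z_{\gl_n}(A)=n$. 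Since this holds on the dense open distinct-eigenvalue locus, it holds for sufficiently general $A$, which is what we want.

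The computation itself is elementary linear algebra, so I do not expect a serious obstacle. The only point genuinely requiring care is the transition from one well-behaved matrix to a generic one: one must check that the distinct-eigenvalue condition defines a dense open set, which reduces to the nonvanishing of the discriminant as a polynomial, exhibited by a single example. I would also remark that $\dim Z_{\gl_n}(A)=n$ is the minimal possible value of a centralizer dimension, attained exactly at the nonderogatory matrices (those whose minimal polynomial equals the characteristic polynomial); the distinct-eigenvalue matrices form a dense subfamily of these, which is why the generic value comes out to $n$.
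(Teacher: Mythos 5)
Your proposal is correct and follows essentially the same route as the paper: both arguments reduce to the Zariski-dense locus of matrices with $n$ distinct eigenvalues and observe that any matrix commuting with such an $A$ is diagonal in the eigenbasis of $A$, leaving exactly $n$ degrees of freedom. Your version merely spells out two points the paper leaves implicit (the non-vanishing of the discriminant, and the conjugation reducing to the diagonal case), so there is nothing further to fix.
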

\begin{proof} Let $A\in \CC^{n\times n}$ be such that it has $n$
distinct, nonzero eigenvalues. The elements of $Z_{\gl_n}(A)$ must be
diagonalized by the same basis that diagonalizes $A$, and such
elements are determined by their eigenvalues on this basis. This
leaves us $n$ degrees of freedom, so the centralizer has dimension $n$.
\end{proof}

\subsection{The preimage of the kernel} \label{subsec:VA}

In the previous section we saw that $G$ has the expected dimension if and only if the kernel of the differential of the double characteristic polynomial map has dimension $n-1$. So far, we have determined this kernel, but what can we say about its dimension?

For given $A\in \Theta_G$, the kernel of $d_A c$ equals the image of the commutator map $X\mapsto [X,A]$ restricted to the linear subspace $V_A\subseteq \gl_n$ defined by
\begin{equation*}
V_A := \{ X\in \gl_n \mid [X,A] \in \ker (d_A c) \},
\end{equation*}
which is the preimage of $\ker (d_A c)$ under the commutator map.
From the fact that $[\cdot,A]$ is a linear map it follows that $V_A$ is a linear subspace of $\gl_n$. Furthermore, any $X$ that commutes with $A$ is contained in $V_A$, since $[X,A]=0$ and $[X,A]_1=0 =[Y,A_1]$ for any $Y\in Z_{\gl_{n-1}}(A_1)$.
From Lemma~\ref{lem:dim-centralizer} we know that the kernel of the commutator map has dimension $n$, which implies that
\begin{equation} \label{eq:dimVA}
\dim \ker (d_A c) = n-1 \quad \Leftrightarrow \quad \dim V_A = 2n-1.
\end{equation}
In words, $G$ has the expected dimension if and only if the dimension of $V_A$ is $2n-1$. Therefore, we will examine the structure of $V_A$ for a given graph $G$. Besides the centralizer of $A$, $V_A$ will always contain the space $\calD_n$ of all $n\times n$ diagonal matrices with entries in $\CC$. Indeed, by computing $DA-AD$ for $D\in \calD_n$ we find
\begin{equation} \label{eq:diagcom}
(DA-AD)_{ij} = (d_{ii}-d_{jj})a_{ij} \quad \mbox{ for } i,j = 1,\ldots,n.
\end{equation}
If position $(i,j)$ of $A$ is zero, it follows that position $(i,j)$ of $[D,A]$ is zero as well. This shows that $[D,A]$ has the correct zero pattern, i.e. $[D,A]\in \Theta_G$. Moreover, one can check that
\[ [D_1,A_1] = [D,A]_1 \]
so the second constraint for being in the kernel of $d_A c$ is also satisfied. The space of $n\times n$ diagonal matrices $\calD_n$ is again $n$-dimensional, hence we already have two $n$-dimensional subspaces of $V_A$. However, these two subspaces have a nontrivial intersection, as the next lemma shows.

\begin{lem}
$Z_{\gl_n}(A) \,\cap \, \calD_n = \CC I_n$ for sufficiently general $A\in \Theta_G$ and $G$ strongly connected. \label{lem:1-dimIntersectionZ,D}
\end{lem}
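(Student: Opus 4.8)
The plan is to reduce the statement to a connectivity argument on $G$ using the explicit commutator formula \eqref{eq:diagcom}. The inclusion $\CC I_n \subseteq Z_{\gl_n}(A) \cap \calD_n$ is immediate, since every scalar matrix is diagonal and commutes with $A$; the content lies in the reverse inclusion.

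First I would fix a diagonal matrix $D = \diag(d_{11}, \ldots, d_{nn})$ and invoke \eqref{eq:diagcom}, which gives $[D,A]_{ij} = (d_{ii} - d_{jj}) a_{ij}$. Thus $D$ lies in $Z_{\gl_n}(A)$ exactly when $(d_{ii} - d_{jj}) a_{ij} = 0$ for all $i,j$. On the diagonal this is automatic; for an off-diagonal position the entry $a_{ij}$ is a free parameter when $j \to i \in E$ and is identically zero otherwise. Hence the condition that $a_{ij} \neq 0$ for every edge $j \to i$ is a nonempty Zariski-open condition on $\Theta_G$, and therefore holds for sufficiently general $A$. On this open set the commuting relation forces $d_{ii} = d_{jj}$ whenever $j \to i$ is an edge of $G$.

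Next I would propagate this equality across the whole vertex set using Assumption~\ref{ass:stronglyconnected}. Given any two vertices $p,q$, strong connectivity provides a directed path $(p = v_0, v_1, \ldots, v_k = q)$; applying $d_{v_t v_t} = d_{v_{t+1} v_{t+1}}$ along each edge $v_t \to v_{t+1}$ yields $d_{pp} = d_{qq}$. Hence all diagonal entries coincide, so $D = \lambda I_n$ for some $\lambda \in \CC$, giving $Z_{\gl_n}(A) \cap \calD_n \subseteq \CC I_n$ and completing the proof.

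I do not expect a serious obstacle: the argument is a direct combination of the commutator computation already recorded in \eqref{eq:diagcom} with the defining property of strong connectivity. The only points requiring care are the genericity bookkeeping — one must note that requiring all edge-parameters $a_{ij}$ to be nonzero is a single nonempty open condition (a finite intersection of complements of coordinate hyperplanes in $\Theta_G$), hence satisfied on a dense subset — and the indexing convention that $a_{ij} \neq 0$ corresponds to the edge $j \to i$ rather than $i \to j$, which does not affect the conclusion since strong connectivity lets us chain the equalities along edges in whichever direction the graph dictates.
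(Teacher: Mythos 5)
Your argument is correct and is essentially the paper's own proof: both use the commutator formula \eqref{eq:diagcom} to force $d_{ii}=d_{jj}$ across each edge for generic $A$ (all edge parameters nonzero being a nonempty Zariski-open condition) and then chain these equalities along directed paths guaranteed by strong connectivity. The only cosmetic difference is that the paper propagates from vertex $1$ to every other vertex, whereas you connect arbitrary pairs; the substance is identical.
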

\begin{proof}
Suppose that $X=\diag(\lambda_1,\ldots,\lambda_n) \in Z_{\gl_n}(A) \cap \calD_n$, then by definition of $Z_{\gl_n}(A)$, $X$ satisfies $XA = AX$.
Combining this equality with equation \eqref{eq:diagcom} shows that for $a_{ij}\neq 0$ this equality implies that $\lambda_i a_{ij} = \lambda_j a_{ij}$ and hence $\lambda_i=\lambda_j$.
Since $G$ is strongly connected, starting from vertex $1$ we can get to any other vertex $j$ along some path $(1,i_1,\ldots,i_k,j)$. The corresponding entries $a_{i_11},a_{i_2i_1},\ldots,a_{ji_k}$ are nonzero for sufficiently general $A$, and by the previous observation it follows that $\lambda_1 = \lambda_{i_1} = \ldots = \lambda_j$. But we can find such a path for any vertex $j\in [n]$, so we conclude that $\lambda_1 = \ldots = \lambda_n$ and therefore $X$ must be of the form $cI_n$, $c\in\CC$.
\end{proof}

What we have seen so far is that $Z_{\gl_n}(A)+\calD_n\subseteq V_A$ for any $G$. According to Lemma~\ref{lem:1-dimIntersectionZ,D} this is a subspace of dimension $2n-1$, so the dimension of $V_A$ is at least $2n-1$. Combining this with equation~\eqref{eq:dimVA}, we obtain the following corollary.

\begin{cor} \label{cor:GoodIffQuotientZero}
$G$ has the expected dimension if and only if \[\quotient{V_A}{(Z_{\gl_n}(A)+\calD_n)} = \{0\}.\]
\end{cor}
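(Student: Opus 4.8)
The plan is to chain together the equivalences already assembled in this section and reduce the statement to a single dimension count, so that the corollary becomes pure linear-algebra bookkeeping on top of the work done in Proposition~\ref{prop:kernelJac} and the subsequent lemmas. First I would recall that, by the definition of ``having the expected dimension'' together with the (unnamed) lemma of Section~\ref{subsec:kernelJac}, the graph $G$ has the expected dimension precisely when $\dim \ker(d_A c) = n-1$. Combining this with the equivalence~\eqref{eq:dimVA}, which itself rests on the fact that the commutator map $X \mapsto [X,A]$ has kernel $Z_{\gl_n}(A)$ of dimension $n$ (Lemma~\ref{lem:dim-centralizer}), I can restate the condition ``$G$ has the expected dimension'' as the single equation $\dim V_A = 2n-1$.

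Next I would pin down the dimension of the subspace $Z_{\gl_n}(A) + \calD_n$, which we have already observed to be contained in $V_A$. By Lemma~\ref{lem:dim-centralizer} the centralizer $Z_{\gl_n}(A)$ is $n$-dimensional, and $\calD_n$ is $n$-dimensional as well. Lemma~\ref{lem:1-dimIntersectionZ,D} identifies their intersection as $\CC I_n$, which is one-dimensional, so the Grassmann dimension formula for subspaces yields $\dim\bigl(Z_{\gl_n}(A) + \calD_n\bigr) = n + n - 1 = 2n-1$.

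It then remains to combine these two facts. Since $Z_{\gl_n}(A) + \calD_n \subseteq V_A$ and the smaller space already has dimension $2n-1$, the equality $\dim V_A = 2n-1$ holds if and only if this inclusion is an equality, i.e. $V_A = Z_{\gl_n}(A) + \calD_n$, which is in turn equivalent to the vanishing of the quotient $\quotient{V_A}{(Z_{\gl_n}(A)+\calD_n)}$. Stringing the equivalences together gives the corollary.

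I do not expect a genuine obstacle here: the substantive content — computing the centralizer dimension, controlling its intersection with $\calD_n$, and identifying $\ker(d_A c)$ — has already been carried out. The only point requiring care is genericity: all the dimension statements, in particular those of Lemmas~\ref{lem:dim-centralizer} and~\ref{lem:1-dimIntersectionZ,D} and the equivalence~\eqref{eq:dimVA}, hold only for \emph{sufficiently general} $A \in \Theta_G$, so I would make sure these hypotheses are simultaneously in force, which they are since the intersection of finitely many dense open subsets of $\Theta_G$ is again dense and open.
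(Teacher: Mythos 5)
Your proof is correct and follows essentially the same route as the paper: both reduce the claim to the dimension count $\dim V_A = 2n-1$ via the equivalence~\eqref{eq:dimVA}, and then observe that $Z_{\gl_n}(A)+\calD_n$ is a $(2n-1)$-dimensional subspace of $V_A$ by Lemma~\ref{lem:dim-centralizer} and Lemma~\ref{lem:1-dimIntersectionZ,D}, so that equality of dimensions is equivalent to the vanishing of the quotient. Your explicit remark on intersecting the finitely many genericity conditions is a point the paper leaves implicit.
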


We shall now derive several restrictions on the form of elements of the quotient space in the above corollary. An important tool will be the following lemma:

\begin{lem} \label{lem:EigenvectorNoZeros}
Let $G$ be a graph, not necessarily strongly connected, and let $A\in \Theta_G$ sufficiently general. Suppose $v=(v_1,\ldots,v_n)$ is an eigenvector of $A$. If $v_i\neq 0$ and there exists a path from $i$ to $j$ in $G$, then also $v_j\neq0$.
\end{lem}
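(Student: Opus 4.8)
The plan is to rephrase the statement as a closure property of the support of an eigenvector and then exploit the independence of the model parameters. Since we work over $\CC$ and a sufficiently general $A\in\Theta_G$ has $n$ distinct eigenvalues (as in the proof of Lemma~\ref{lem:dim-centralizer}), it suffices to prove the following: for general $A$, the support $\supp(v)=\{k: v_k\neq 0\}$ of every eigenvector $v$ of $A$ is \emph{forward-closed}, meaning that whenever $k\in\supp(v)$ and $k\to l\in E$ we also have $l\in\supp(v)$. Granting this, the lemma follows by walking along a path $i=w_0\to w_1\to\cdots\to w_\ell=j$: from $i\in\supp(v)$ forward-closedness gives $w_1,\ldots,w_\ell\in\supp(v)$, so $v_j\neq 0$. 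Writing the eigenvector equation $Av=\lambda v$ in coordinate $l$ gives $(\lambda-a_{ll})v_l=\sum_{k\to l}a_{lk}v_k$, which is the identity I will analyse.

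First I would pass to a setting where the eigenpairs are algebraic functions of $A$. Over the dense open subset $U\subseteq\Theta_G$ on which $A$ has distinct eigenvalues, after a generically finite cover one obtains branches $A\mapsto(\lambda(A),v(A))$ along which the support $\Sigma:=\supp(v)$ is constant. Suppose, for contradiction, that some branch has a coordinate $l_0\notin\Sigma$ with an in-edge from $\Sigma$. The crucial structural observation is that, restricting the eigenvector equation to the rows indexed by $\Sigma$ and using $v_k=0$ for $k\notin\Sigma$, one finds $A[\Sigma]\,(v|_\Sigma)=\lambda\,(v|_\Sigma)$, where $A[\Sigma]$ is the principal submatrix of $A$ on $\Sigma$. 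Thus both $\lambda$ and the components $v_k$ with $k\in\Sigma$ depend \emph{only} on the entries of $A[\Sigma]$.

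The main obstacle is exactly the possibility of cancellation: for each $l\notin\Sigma$ the equation in row $l$ reads $\sum_{k\in\Sigma,\,k\to l}a_{lk}v_k=0$, and a priori the nonzero terms might cancel. This is where the independence of the parameters (Assumption~\ref{ass:leak}) enters decisively. Because every compartment carries an independent leak, the off-diagonal entries of $A$ together with the diagonal entries form $m+n$ independent coordinates on $\Theta_G$; in particular the weights $a_{lk}$ of the out-edges from $\Sigma$, which live in rows $l\notin\Sigma$, are coordinates that do not occur in $A[\Sigma]$ and hence are independent of the values $v_k$, $k\in\Sigma$. Viewing $\sum_{k\in\Sigma,\,k\to l}a_{lk}v_k=0$ as an identity on $U$ in which the coefficients $a_{lk}$ vary freely while the $v_k$ stay fixed, each coefficient $v_k$ must vanish; since $v_k\neq 0$ for $k\in\Sigma$, there can be no edge $k\to l$ with $k\in\Sigma$ and $l\notin\Sigma$. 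Hence $\Sigma$ is forward-closed, contradicting the assumption on $l_0$. Running this over the finitely many branches and pairs $(i,j)$ that admit a path shows that the exceptional set is contained in a proper subvariety of $\Theta_G$, which proves the claim for sufficiently general $A$. The delicate point to get right in a full write-up is the passage to algebraic branches with constant support and the precise sense in which $v_k$ is a function of $A[\Sigma]$ alone; the remainder is the clean linear-independence argument just described, and notably it never uses strong connectivity, consistent with the hypotheses of the lemma.
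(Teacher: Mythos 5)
Your argument is correct and is essentially the paper's own proof: both restrict the eigenvector equation to the principal submatrix on the support (your $A[\Sigma]$ is the paper's $A'$), observe that the nonzero entries of $v$ are determined by that submatrix alone, and then use the independence of the out-of-support parameters $a_{lk}$ to rule out cancellation in $\sum_{k\in\Sigma,\,k\to l}a_{lk}v_k=0$, before propagating along the path. Your extra care about passing to algebraic branches with constant support is a slightly more rigorous packaging of the paper's informal ``sufficiently general'' step, but the substance is identical.
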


\begin{proof}
Let $v=(v_1,\ldots,v_n)\in \CC^n$ such that $Av = \lambda v$.
Partition the indices $1,\ldots,n$ into two sets, $[n]=I\sqcup J$,
such that $v_i=0 \; \forall \, i\in I$ and $v_j\neq 0 \; \forall \,
j\in J$. Construct the $|J|\times|J|$ matrix $A'$ by removing the rows
and columns of $A$ indexed by elements of $I$. Similarly, let $v'$ be
the vector obtained from $v$ by removing its zero entries. Then we
have $A'v' = \lambda v'$ and for sufficiently general $A'$ this
determines the vector $v'\in \CC^{|J|}$ up to multiplication by a
scalar (recall that the diagonal of $A'$ consists of free parameters
independent of the other parameters). 
Since $v$ is obtained from $v'$ by adding zero entries at positions indexed by $I$, also $v$ has been determined up to scalar multiplication. However, for $Av=\lambda v$ to hold, $v$ must satisfy a system of $n$ linear equations of the form
\[ \sum_{j\in J} a_{ij}v_j = \lambda v_i, \qquad i\in [n].\]
We know that $v$ must be a solution of the subset of these equations corresponding to $i\in J$, since $Av'=\lambda v'$. The equations that remain to be satisfied are of the form
\begin{equation} \sum_{j\in J} a_{ij}v_j = 0, \qquad i\in I. \label{eq:eigenvector} \end{equation}
The entries $v_j$ with $j\in J$ are already fixed and only depend on the matrix $A'$, so for sufficiently general $A$ the $v_j$ are completely independent of the entries $a_{ij}$ with $i\in I$. Therefore, if $v_j\neq 0$ and $a_{ij}\neq 0$, the nonzero term $a_{ij}v_j$ cannot be cancelled from equation \eqref{eq:eigenvector}. So for $v$ to satisfy $Av=\lambda v$, one must have $v_i\neq0$ whenever there exists $j\in [n]$ such that $j\to i$ is an edge in $G$ and $v_j\neq 0$.

Now suppose $v_i\neq 0$ and there exists a path $\{i,s_1,\ldots,s_t,j\}$ in $G$. Then by our previous observation, we have
\[ v_i\neq 0 \; \Rightarrow \; v_{s_1}\neq 0 \; \Rightarrow \; \ldots \; \Rightarrow \; v_{s_t}\neq 0 \; \Rightarrow \; v_j\neq 0.\]
\end{proof}

This lemma implies that the support of $v$ is the union of vertex sets of
strongly connected components of $G$. In particular, if $G$ is strongly
connected, then $v$ does not have any zero entries.

\begin{prop} \label{prop:Row1+Diagonal Zeros}
Let $G$ be strongly connected and $A\in\Theta_G$ sufficiently general. Then any class $[X] \in V_A \,/ \left( Z_{\gl_n}(A) + \calD_n \right)$ has a representative $x=(x_{ij})\in V_A$ whose first row, first column and diagonal are all zero, i.e. $x_{i1}=x_{1i}=x_{ii}=0$ for all $i\in [n]$.
\end{prop}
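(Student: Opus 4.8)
The plan is to normalize $X$ in three stages, spending the two available gauge directions $Z_{\gl_n}(A)$ and $\calD_n$ and then letting the defining conditions of $V_A$ kill the remaining entries for free. Throughout I use that $X\in V_A$ means exactly (i) $[X,A]\in\Theta_G$ and (ii) there is a $Y\in\gl_{n-1}$ with $[Y,A_1]=[X,A]_1$. As a bookkeeping sanity check: I must annihilate $3n-2$ entries (first row, first column, diagonal, with overlaps at $(1,1)$), while $Z_{\gl_n}(A)+\calD_n$ supplies only $\dim = 2n-1$ directions; so $n-1$ of the constraints, namely the off-diagonal first row, will have to vanish automatically, and producing this vanishing is the real content.

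\emph{Stage 1 (first column).} Adding an element of $Z_{\gl_n}(A)$ leaves $[X,A]$ unchanged, hence preserves both (i) and (ii), so I may adjust $X$ freely by the centralizer. Writing $A=S\Lambda S^{-1}$ with distinct eigenvalues (generic $A$), the first column of $SMS^{-1}$ for $M=\diag(\mu_1,\dots,\mu_n)$ is $\sum_k \mu_k (S^{-1})_{k1}\, v^{(k)}$, where the $v^{(k)}$ are the columns of $S$. Applying Lemma~\ref{lem:EigenvectorNoZeros} to $A^{T}$, which lies in $\Theta_{G^{\mathrm{rev}}}$ with $G^{\mathrm{rev}}$ again strongly connected, the rows of $S^{-1}$ (the left eigenvectors of $A$) have no zero entries, so $(S^{-1})_{k1}\neq 0$ for all $k$ and the first-column map $Z_{\gl_n}(A)\to\CC^{n}$ is onto. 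Subtracting a centralizer element matching the first column of $X$, I may assume $X_{i1}=0$ for all $i$.

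\emph{Stage 2 (first row).} Set $\mathbf{a}=(A_{i1})_{i\geq 2}$ and $\mathbf{x}=(X_{1j})_{j\geq 2}$. Using $X_{i1}=0$, the $k=1$ terms in the commutator give $[X,A]_1=[X_1,A_1]-\mathbf{a}\mathbf{x}^{T}$, so condition (ii) forces $\mathbf{a}\mathbf{x}^{T}=[X_1-Y,A_1]\in\im(\operatorname{ad}_{A_1})$. As $\operatorname{ad}_{A_1}$ is skew for the trace form and its kernel is the centralizer $Z_{\gl_{n-1}}(A_1)$, which for generic $A_1$ is spanned by $I,A_1,\dots,A_1^{n-2}$, I obtain $0=\operatorname{tr}(\mathbf{a}\mathbf{x}^{T}A_1^{k})=\mathbf{x}^{T}A_1^{k}\mathbf{a}$ for $k=0,\dots,n-2$; that is, $\mathbf{x}$ is orthogonal to the entire Krylov space of $(A_1,\mathbf{a})$. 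The crux is therefore to prove that $\mathbf{a}$ is a \emph{cyclic} vector for $A_1$, so that this Krylov space is all of $\CC^{n-1}$ and $\mathbf{x}=0$. I will show no left eigenvector $q$ of $A_1$ is orthogonal to $\mathbf{a}$: since the entries of $A_1$ are independent of the first-column parameters $a_{i1}$, it suffices that $\supp(q)$ contains some out-neighbour of vertex $1$, for then $q\cdot\mathbf{a}=\sum_{i:\,1\to i} q_i a_{i1}$ is a nonzero linear form in generic parameters. Now $q$ is an eigenvector of $A_1^{T}\in\Theta_{(G_1)^{\mathrm{rev}}}$, so Lemma~\ref{lem:EigenvectorNoZeros} makes $\supp(q)$ closed under $G_1$-predecessors; taking any $i\in\supp(q)$ and a simple path $1\to v_1\to\cdots\to i$ in $G$ (strong connectivity), its tail lies in $G_1$, whence $v_1\in\supp(q)$ is the required out-neighbour of $1$. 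This yields cyclicity, hence $\mathbf{x}=0$, i.e.\ the off-diagonal first row vanishes on its own.

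\emph{Stage 3 (diagonal).} I finally subtract $\diag(X)\in\calD_n\subseteq V_A$; being diagonal it changes no off-diagonal entry, so the (already zero) first row and column are untouched while the diagonal becomes zero. The resulting matrix is the claimed representative, and since every subtraction was by an element of $Z_{\gl_n}(A)+\calD_n$, the class $[X]$ is unchanged. I expect the only genuine obstacle to be Stage 2, and within it precisely the cyclicity of $\mathbf{a}$ for $A_1$ (equivalently, that every left eigenvector of $A_1$ is supported on an out-neighbour of $1$); Stages 1 and 3 are routine gauge fixing.
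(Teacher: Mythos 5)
Your proof is correct, and at its core it is the paper's argument reflected through transposition: the paper gauge-fixes the first \emph{row} by the centralizer (using that the right eigenvectors of $A$ have no zero entries, and that the first-row projection $Z_{\gl_n}(A)\to\CC^n$ is bijective) and then shows the first \emph{column} vanishes automatically, whereas you fix the first column (via the left eigenvectors, i.e.\ Lemma~\ref{lem:EigenvectorNoZeros} applied to $A^T\in\Theta_{G^{\mathrm{rev}}}$) and let the first row vanish automatically; both proofs rest on the same decomposition $Z_{\gl_n}(A)+\calD_n$ and the same key lemma, and your identity $[X,A]_1=[X_1,A_1]-\mathbf{a}\mathbf{x}^T$ is the mirror of the paper's $[X,A]_1=[X_1,A_1]+x_1a_1^T$. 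The one place you genuinely diverge is the engine for the automatic-vanishing step. The paper proves by direct computation that $v_i^TB\in\bigoplus_{j\neq i}\CC v_j^T$ for every $B\in[\gl_{n-1},A_1]$ and left eigenvector $v_i$ of $A_1$, expands $a_1^T$ in the left eigenbasis, and then runs a case analysis over the strongly connected components $C_1,\dots,C_l$ of $G_1$ to show no coefficient of that expansion vanishes. You replace this with trace-orthogonality of $\im(\operatorname{ad}_{A_1})$ to $I,A_1,\dots,A_1^{n-2}$ (for which you only need these powers to \emph{lie in} the centralizer, not to span it), reducing everything to cyclicity of $\mathbf{a}$ for $A_1$; and your proof of cyclicity --- the support of a left eigenvector of $A_1$ is closed under $G_1$-predecessors, so a simple path from $1$ to any support vertex hands you an out-neighbour of $1$ in the support --- compresses the paper's component-by-component analysis into two lines. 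The two formulations are equivalent (for $A_1$ with distinct eigenvalues, cyclicity of $\mathbf{a}$ is exactly the statement that no left eigenvector of $A_1$ is orthogonal to $\mathbf{a}$), but yours is the cleaner packaging, and the remaining verifications (surjectivity of the first-column projection of the centralizer, independence of $\mathbf{a}$ from $A_1$, nonvanishing of $\mathbf{a}$ by strong connectivity) all check out.
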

\begin{proof}
Let $[X] \in V_A \,/ \left( Z_{\gl_n}(A) + \calD_n \right)$. First we show that there exists a representative $x$ of $[X]$ whose first row and the diagonal are zero, then we use these facts to show that also the first column must be zero.

We claim that projecting $M\in Z_{\gl_n}(A)$ onto its first row yields
a bijection between $Z_{\gl_n}(A)$ and $\CC^n$. Note that indeed both
spaces are $n$-dimensional. The set of diagonalizable matrices is
dense in $\CC^{n\times n}$, so a sufficiently general $A\in \Theta_G$
is diagonalizable. Let $A = PDP^{-1}$ be the eigendecomposition of
$A$, the columns of $P$ forming a basis of eigenvectors. If $A$ is
diagonalizable, then $MA = AM$  if and only if $M = PD'P^{-1}$ for
some diagonal matrix $D'$. 
Since $G$ is strongly connected, Lemma~\ref{lem:EigenvectorNoZeros}
implies that $P$ contains no zeros. Hence if $M$ is nonzero, then
$MP=PD'$ implies that the first row of $M$ has at least one nonzero
position. Therefore the projection $Z_{\gl_n}(A) \to \RR^n$ to the
first row is injective and as both spaces have dimension $n$, it is
also surjective.

Now choose $M\in Z_{\gl_n}(A)$ such that its first row equals the first row of $X$ and choose a diagonal matrix $D\in \calD_n$ whose diagonal equals the diagonal of $X-M$. Then $M+D\in Z_{\gl_n}(A)+\calD_n$ and $[X]=[X-(M+D)] \in V_A \,/ \left( Z_{\gl_n}(A) + \calD_n \right)$, hence $x=X-(M+D)$ is a representative of $[X]$ satisfying $x_{1i}=x_{ii}=0$ for all $i\in [n]$.

What remains to be shown, is that for $X \in V_A $ which has its first
row and diagonal all zero, also the first column of $X$ must be zero;
in fact, in the proof we will use only that the first row is zero. Write
both $X$ and $A$ as block matrices: 
\[
X = \left[\begin{array}{c|c}
    0 & 0^T  \cr
    \cline{1-2}
    \vspace{-3mm} &  \cr
    x_1 &  X_1
\end{array}\right]
\quad \text{ and }
\quad 
A =\left[\begin{array}{c|c}
    a_{11} & a_1^T \cr
    \cline{1-2}
    \vspace{-3mm} &  \cr
    a_2 & A_1
\end{array}\right],
\]
where $x_1,a_1$ and $a_2$ are vectors in $\CC^{n-1}$ and $X_1,A_1$ are matrices in $\CC^{(n-1)\times(n-1)}$. Multiplying these matrices to obtain $XA-AX$, we see that
\[[X,A]_1 = X_1A_1 - A_1X_1 + x_1a_1^T.\]
For $X$ to lie in $V_A$ there must exist $Y\in \gl_{n-1}$ such that $[X,A]_1 = [Y,A_1]$, so we obtain
\begin{equation*}
x_1a_1^T = (Y-X_1)A_1 - A_1(Y-X_1) = [Y-X_1,A_1].
\end{equation*}
We need to show that for sufficiently general $A$ this implies $x_1=0$, i.e.
\begin{equation*} 
\left\{x_1a_1^T \mid x_1 \in \CC^{n-1}\right\} \cap [\gl_{n-1},A_1] = \{0\}.
\end{equation*}
Observe that the first space has dimension $n-1$, while the dimension
of the second space equals $\dim \gl_{n-1}-\dim Z_{\gl_{n-1}}(A_1) =
(n-1)^2-(n-1)$. This suggests that their intersection might indeed be
trivial.

Let $B=x_1a_1^T\in [\gl_{n-1},A_1]$ en let $v_1^T,\ldots,v_{n-1}^T$
the row eigenvectors of $A_1$, where $v_1,\ldots,v_{n-1}\in \CC^{n-1}$
form a basis (since $A_1$ is sufficiently general). We claim that
\begin{equation} \label{eq:B} v_i^T B \in \bigoplus_{j\neq i} \CC v_j^T \qquad \mbox{ for }
i=1,\ldots,n-1.\end{equation}
Indeed, we can write $B=[C,A_1]$ for some $C\in \gl_{n-1}$, so that,
for a fixed $i$,
\begin{equation*}
    v_i^T B = v_i^T CA_1 - \lambda_iv_i^TC = v_i^T(CA_1-\lambda_i C) = v_i^T C (A_1-\lambda_i I).
\end{equation*}
Now $v_i^T C = \sum_{j=1}^{n-1} \alpha_j v_j^T$ for some 
$\alpha_1,\ldots,\alpha_{n-1}\in \CC$. As $v_i^T(A_1-\lambda_i I) =
0$, we find 
\begin{equation*} 
    v_i^T B = \sum_{j=1}^{n-1} \alpha_j v_j^T (A_1-\lambda_i I) =
\sum_{j\neq i} \alpha_j (\lambda_j - \lambda_i) v_j^T \in
\bigoplus_{j\neq i} \CC v_j^T, 
\end{equation*}
as claimed. Now decompose $a_1^T=\sum_{j=1}^{n-1} c_j v_j^T=c^T P$ where
the rows of $P$ are the $v_j^T$ and let $J:=\{j \in [n] \mid c_j \neq
0\}$ be the support of $a_1$ on this basis. Since $G$ has an arrow to
$1$, $a_1$ is not identically zero, and hence $J \neq \emptyset$. Now
\eqref{eq:B} and $B=x_1 a_1^T$ implies that $v_j^T x_1=0$ for $j \in
J$. We claim that $J=[n]$, so that $x_1=0$, as desired.

To see this, write $c^T=a_1^T P^{-1}$, and note that $P^{-1}$ is the
matrix whose columns are the column eigenvectors of $A_1$. If the
graph $G_1$ corresponding to $A_1$ is strongly connected, then we know from
Lemma~\ref{lem:EigenvectorNoZeros} that $P^{-1}$ contains no zeros,
and since $a_1$ is independent of $A_1$ and not identically zero, we find that $c$
has no zeroes.

In the general case, let $C_1,\ldots,C_l$ be the strongly connected
components of $G_1$, and let $u$ be a column of $P^{-1}$. By
Lemma~\ref{lem:EigenvectorNoZeros}, for each component $C_i$, the
entries of $u$ corresponding to the vertices of $C_i$ are either all
zero or all nonzero. The eigenvector $u$ must be nonzero on at least
one component $C_i$, and if this component has an edge to vertex $1$
in the original graph $G$, then $a_1^T u\neq 0$. If $C_i$ does not have an edge to vertex 1 in $G$,
there must be a path in $G_1$ from $C_i$ to some component $C_j$ which
does have an edge to $1$ in $G$, as $G$ is strongly connected. Then
Lemma~\ref{lem:EigenvectorNoZeros} implies that for every
vertex $k$ on this path we have $u_k\neq 0$. In particular, all
entries of $u$ corresponding to vertices of $C_j$ are nonzero, and
since $C_j$ has an edge to 1 in $G$, again we obtain $a_1^T u\neq 0$.
Hence $c^T=a_1^T P^{-1}$ has no zero entries.
\end{proof}

This proposition implies that when looking for $X\in V_A \,/ \left( Z_{\gl_n}(A) + \calD_n \right)$, it suffices to search for $X$ whose first row, first column and diagonal are all zero. By definition, $V_A$ contains all $X\in \gl_n$ for which $[X,A]$ lies in the kernel of the differential $d_A c$. From Proposition~\ref{prop:kernelJac} we know that this implies that $[X,A]\in \Theta_G$ \emph{and} there must exist $Y\in \gl_{n-1}$ such that $[Y,A_1]=[X,A]_1$.
For $X\in \gl_n$ whose first row, first column and diagonal are all zero, we can just take $Y=X_1$, since
\begin{equation*} \label{eq:XA-AX}
\left[\begin{array}{c|c}
    0 & 0^T  \cr
    \cline{1-2}
    \vspace{-3mm} &  \cr
    0 &  X_1
\end{array}\right]
\left[\begin{array}{c|c}
    a_{11} & a_1^T \cr
    \cline{1-2}
    \vspace{-3mm} &  \cr
    a_2 & A_1
\end{array}\right]
-
\left[\begin{array}{c|c}
    a_{11} &  a_1^T  \cr
    \cline{1-2}
    \vspace{-3mm} & \cr
    a_2 & A_1
\end{array}\right]
\left[\begin{array}{c|c}
    0 & 0^T \cr
    \cline{1-2}
    \vspace{-3mm} & \cr
    0 & X_1
\end{array}\right]
= \left[\begin{array}{c|c}
    0 & a_1^TX_1 \cr
    \cline{1-2}
    \vspace{-3mm} & \cr
    X_1a_2 & [X_1,A_1]
\end{array}\right]
\end{equation*}
Hence for $X$ of this form we have
\[ X \in V_A \quad \Leftrightarrow \quad [X,A]\in \Theta_G. \]
Combining this observation with Corollary~\ref{cor:GoodIffQuotientZero} and Proposition~\ref{prop:Row1+Diagonal Zeros}, we obtain the following corollary.
\begin{cor} \label{cor:GoodIffX}
$G$ has the expected dimension if and only if, for sufficiently general $A\in \Theta_G$, there does not exist $X\in \gl_n$ of the form $X_{1i}=X_{i1}=X_{ii}=0$ for all $i\in [n]$, $X\neq 0$, such that the commutator $[X,A]$ lies in the parameter space $\Theta_G$.
\end{cor}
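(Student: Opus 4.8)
The plan is to deduce this corollary from Corollary~\ref{cor:GoodIffQuotientZero} together with Proposition~\ref{prop:Row1+Diagonal Zeros} and the block computation preceding the statement, the only genuinely new point being to check that a nonzero matrix of the prescribed shape cannot be absorbed into $Z_{\gl_n}(A)+\calD_n$. Throughout I will abbreviate ``$X_{1i}=X_{i1}=X_{ii}=0$ for all $i\in[n]$'' as \emph{special form}, and $A\in\Theta_G$ is taken sufficiently general so that all cited results apply simultaneously. The first step is to make precise the equivalence between the two forms of the condition. For $X$ of special form the displayed block multiplication gives $[X,A]_1=[X_1,A_1]$, so taking $Y=X_1$ in Proposition~\ref{prop:kernelJac} shows that the second defining constraint for membership in $V_A$ is automatically met; hence for such $X$ one has $X\in V_A$ if and only if $[X,A]\in\Theta_G$. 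Thus the nonexistence of a nonzero special-form $X$ with $[X,A]\in\Theta_G$ is the same as the nonexistence of a nonzero special-form $X$ in $V_A$, and by Corollary~\ref{cor:GoodIffQuotientZero} it remains to prove that this last condition is equivalent to the triviality of $V_A/(Z_{\gl_n}(A)+\calD_n)$.

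One implication is routine. Assuming that no nonzero special-form element of $V_A$ exists, I would take an arbitrary class $[X]$ in the quotient, use Proposition~\ref{prop:Row1+Diagonal Zeros} to choose a special-form representative $x\in V_A$, and observe that $x$ must then be $0$, so that $[X]=0$; hence the quotient is trivial and $G$ has the expected dimension.

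The reverse implication is where I expect the main obstacle to lie: given a nonzero special-form $X\in V_A$, I must argue that its class in the quotient is nonzero, i.e. that $X\notin Z_{\gl_n}(A)+\calD_n$. The crux is to show that the only special-form matrix contained in $Z_{\gl_n}(A)+\calD_n$ is $0$. Writing such a matrix as $M+D$ with $M\in Z_{\gl_n}(A)$ and $D\in\calD_n$, I would note that since its first row vanishes and $D$ affects only the diagonal, the first row of $M$ must equal $(-D_{11},0,\ldots,0)$. By the first-row bijection established in the proof of Proposition~\ref{prop:Row1+Diagonal Zeros}, a centralizer element is determined by its first row, and the scalar matrix $-D_{11}I_n$ lies in $Z_{\gl_n}(A)$ and has precisely this first row, forcing $M=-D_{11}I_n$; this is exactly where Lemma~\ref{lem:1-dimIntersectionZ,D}, identifying $Z_{\gl_n}(A)\cap\calD_n$ with $\CC I_n$, is doing the work. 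Then $M+D$ is diagonal, and vanishing of its diagonal gives $M+D=0$. Consequently a nonzero special-form $X\in V_A$ has $[X]\neq0$, the quotient is nontrivial, and $G$ does not have the expected dimension. Combining the two implications yields the stated equivalence.
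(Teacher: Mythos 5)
Your proof is correct and follows essentially the same route as the paper, which simply combines Corollary~\ref{cor:GoodIffQuotientZero}, Proposition~\ref{prop:Row1+Diagonal Zeros}, and the block computation showing that a special-form $X$ lies in $V_A$ iff $[X,A]\in\Theta_G$. The one step the paper leaves implicit --- that a nonzero special-form matrix cannot lie in $Z_{\gl_n}(A)+\calD_n$, which you settle via the first-row bijection on the centralizer --- is exactly right and is a worthwhile detail to have spelled out.
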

Thus, to determine whether a graph has the expected dimension, we need to check whether there exists $X\in \gl_n$ satisfying the properties of Corollary~\ref{cor:GoodIffX}. Consider the following condition on the parameter matrix $A=A(G)$:
\begin{cond}\label{cond:support}
There exists an ordered pair $(i,j)$ with $i,j\in \{2,\ldots,n\}$, $i\neq j$, such that the support of the $j$-th row is contained in the support of the $i$-th row of $A$ and the support of the $i$-th column is contained in the support of the $j$-th column of $A$.
\end{cond}

For a strongly connected graph $G$, the matrix $A=A(G)$ satisfies the above condition whenever there exist vertices $i,j\neq 1$ such that for all $k\in [n]$ the following holds: for any edge $k\to j$ there is also an edge $k\to i$ and for any edge $i \to k$ there is also an edge $j \to k$. Also the nonzero entries $a_{ii}$ and $a_{jj}$ of $A$ should be taken into account, which implies that both $a_{ij}$ and $a_{ji}$ are nonzero, i.e. $i$ and $j$ form a 2-cycle in $G$.

\begin{ex}\label{ex1:sec243}
Consider the graph $G$ in Figure \ref{fig:example1_sec243} and its parameter matrix
\[ A(G) =
    \begin{bmatrix}
    a_{11} & 0 & 0 & a_{14} \\
    a_{21} & a_{22} & a_{23} & 0 \\
    0 & a_{32} & a_{33} & 0 \\
    0 & 0 & a_{43} & a_{44}
    \end{bmatrix}
\]
Observe that the pair $(2,3)$ satisfies Condition \ref{cond:support}. Let $X=E_{23}$ be the matrix with a 1 at position $(2,3)$ and zeros elsewhere, then $[X,A]$ has the correct zero pattern:
\[ [X,A] =
    \begin{bmatrix}
    0 & 0 & 0 & 0 \\
    0 & a_{32} & a_{33}-a_{22} & 0 \\
    0 & 0 & -a_{32} & 0 \\
    0 & 0 & 0 & 0
    \end{bmatrix}
\]
This shows that $E_{23}$ represents a nontrivial element of $V_A \,/ \left(Z_{\gl_n}(A) + D_n\right)$ and hence $G$ does not have the expected dimension.
\end{ex}

\begin{figure}
  \centering
  \includegraphics[height=2cm]{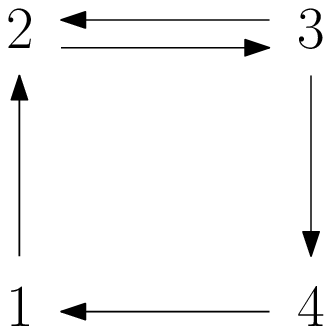}
  \caption{\small{Example \ref{ex1:sec243}}}
  \label{fig:example1_sec243}
\end{figure}

It turns out that Condition~\ref{cond:support} is a sufficient condition for $ V_A \,/ \left(Z_{\gl_n}(A) + D_n\right)$ to be nontrivial, as we show in the next lemma. Let $E_{ij}$ be the matrix with a 1 at position $(i,j)$ and zeros elsewhere.

\begin{lem} \label{lem:cond_support}
Let $G$ be a strongly connected graph such that the pair $(i,j)$ satisfies Condition~\ref{cond:support} with $A\in \Theta_G$. Then the matrix $X = E_{ij} \in \gl_n$ yields a nontrivial class $[X]\in V_A \,/ \left( Z_{\gl_n}(A) + D_n \right)$.
\end{lem}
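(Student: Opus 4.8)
The plan is to apply the characterization from Corollary~\ref{cor:GoodIffX}. Since $i,j\in\{2,\ldots,n\}$ and $i\neq j$, the matrix $X=E_{ij}$ automatically has vanishing first row, first column and diagonal, and $X\neq 0$. By the observation immediately preceding Corollary~\ref{cor:GoodIffX}, an $X$ of this form lies in $V_A$ precisely when $[X,A]\in\Theta_G$. So the argument splits into two tasks: first verify that the commutator $[E_{ij},A]$ has the correct zero pattern, and then show that the resulting class in $V_A/(Z_{\gl_n}(A)+\calD_n)$ is nonzero.

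For the first task I would compute $[E_{ij},A]=E_{ij}A-AE_{ij}$ explicitly. Left multiplication by $E_{ij}$ copies the $j$-th row of $A$ into the $i$-th row (killing all other rows), while right multiplication copies the $i$-th column of $A$ into the $j$-th column. Hence the off-diagonal entries of $[E_{ij},A]$ are $a_{jl}$ at position $(i,l)$ for $l\neq j$, the entries $-a_{ki}$ at position $(k,j)$ for $k\neq i$, and the single entry $a_{jj}-a_{ii}$ at position $(i,j)$, every other off-diagonal position being zero. I would then match these three families against the zero pattern of $\Theta_G$ using Condition~\ref{cond:support}. The inclusion of the $j$-th row support in the $i$-th row support guarantees that $a_{jl}\neq 0$ forces $l\to i\in E$, so the entries in row $i$ are admissible; the inclusion of the $i$-th column support in the $j$-th column support guarantees that $a_{ki}\neq 0$ forces $j\to k\in E$, so the entries in column $j$ are admissible; and since the diagonal entries $a_{ii},a_{jj}$ lie in these supports, the two inclusions force $a_{ij},a_{ji}\neq 0$, i.e. $i,j$ form a $2$-cycle, which legitimizes the $(i,j)$ entry. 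As the diagonal of $\Theta_G$ is unconstrained, this shows $[E_{ij},A]\in\Theta_G$ and hence $E_{ij}\in V_A$.

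For nontriviality I would inspect the diagonal of the commutator, where a direct computation gives $[E_{ij},A]_{ii}=a_{ji}$, $[E_{ij},A]_{jj}=-a_{ji}$, and $[E_{ij},A]_{kk}=0$ for all other $k$. Suppose, for contradiction, that $E_{ij}=M+D$ with $M\in Z_{\gl_n}(A)$ and $D\in\calD_n$. Then $[E_{ij},A]=[M,A]+[D,A]=[D,A]$, because $M$ commutes with $A$. But for any diagonal $D$ the commutator $[D,A]$ has zero diagonal by \eqref{eq:diagcom}, whereas $[E_{ij},A]_{ii}=a_{ji}\neq 0$ for sufficiently general $A$ (the $2$-cycle ensures $i\to j\in E$, so $a_{ji}$ is a free, generically nonzero parameter). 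This contradiction shows $E_{ij}\notin Z_{\gl_n}(A)+\calD_n$, so $[E_{ij}]$ is a nontrivial class.

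The main obstacle is the bookkeeping in the zero-pattern verification: one must carefully translate each of the two support inclusions of Condition~\ref{cond:support} into an admissibility statement for the correct family of commutator entries, and separately argue that the $2$-cycle, a hidden consequence of the diagonal entries lying in the supports, takes care of the exceptional $(i,j)$ position. By contrast, the nontriviality is a clean one-line observation: a diagonal commutator $[D,A]$ always has zero diagonal, so it can never reproduce the nonzero diagonal of $[E_{ij},A]$.
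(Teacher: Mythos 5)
Your verification that $[E_{ij},A]\in\Theta_G$ is correct and is essentially the paper's own proof: one observes that $E_{ij}A$ copies row $j$ of $A$ into row $i$ and $AE_{ij}$ copies column $i$ into column $j$, after which the two support inclusions of Condition~\ref{cond:support} do all the work. Note that for the exceptional position $(i,j)$ you only need $a_{ij}\neq 0$ (i.e.\ $j\to i\in E$), and this does follow: $a_{jj}\neq 0$ puts $j$ in the support of row $j$, hence in the support of row $i$.

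The gap is in your nontriviality argument. Both support inclusions, applied to the diagonal entries $a_{jj}$ and $a_{ii}$, yield the \emph{same} conclusion $a_{ij}\neq 0$; neither yields $a_{ji}\neq 0$, so the full $2$-cycle is not a consequence of Condition~\ref{cond:support}. (The paper's informal discussion following the condition also asserts the $2$-cycle, but that assertion does not follow either.) Concretely, take $n=4$ with edges $1\to 2$, $1\to 3$, $3\to 2$, $2\to 4$, $3\to 4$, $4\to 1$: this graph is strongly connected with $2n-2$ edges, the pair $(i,j)=(2,3)$ satisfies Condition~\ref{cond:support}, yet $2\to 3\notin E$, so $a_{32}=0$ and the diagonal of $[E_{23},A]$ vanishes identically. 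Your intended contradiction --- the nonzero diagonal of $[E_{ij},A]$ against the always-zero diagonal of $[D,A]$ from \eqref{eq:diagcom} --- then evaporates, even though the lemma's conclusion still holds in this example. The repair is the one already built into the paper's machinery: if $E_{ij}=M+D$ with $M\in Z_{\gl_n}(A)$ and $D\in\calD_n$, then the first row of $M$ is $(-d_{11},0,\ldots,0)$, which is also the first row of the centralizer element $-d_{11}I_n$; by the injectivity of the first-row projection on $Z_{\gl_n}(A)$ established in the proof of Proposition~\ref{prop:Row1+Diagonal Zeros}, this forces $M=-d_{11}I_n$, so $E_{ij}=D-d_{11}I_n$ would be diagonal, a contradiction. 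This is precisely what allows the paper to invoke Corollary~\ref{cor:GoodIffX} and reduce the entire lemma to the zero-pattern check; your diagonal argument is a nice shortcut, but only in the case where $i\to j$ really is an edge.
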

\begin{proof}
From $i,j\in \{2,\ldots,n\}$ and $i\neq j$, it follows that $X$ is of the correct form: it has its first row, first column and diagonal all zero. According to Corollary~\ref{cor:GoodIffX} we only need to show that $[X,A]\in \Theta_G$.
Consider the two terms of the Lie bracket: $XA$ has its $i$-th row equal to the $j$-th row of $A$ and zeros elsewhere, while $AX$ has its $j$-th column equal to the $i$-th column of $A$ and zeros elsewhere. Clearly $A$ itself must have the correct zero pattern, so from Condition~\ref{cond:support} we immediately see that $XA-AX$ must be in $\Theta_G$.
\end{proof}

When a graph $G$ satisfies Condition~\ref{cond:support}, this lemma allows us to conclude that $G$ does not have the expected dimension by only inspecting $A(G)$. However, Condition~\ref{cond:support} is not a necessary condition for $G$ not to have the expected dimension. In the next section we will derive a criterion to decide for a given graph whether or not there exists $X \in \gl_n$ satisfying Corollary~\ref{cor:GoodIffX}.

\subsection{A new criterion based on matrix rank} \label{subsec:NewRankCriterion}
Let $X\in \gl_n$ have its first row, first column and diagonal all zero and write
\[X =\sum_{(i,j)\in L} x_{ij}E_{ij}.\]
Define two sets $L, R$ as follows:
\begin{equation*}
	\begin{aligned} \label{eq:defLR}
		L &= \left\{ (i,j) \mid i,j\in \{2,\ldots,n\} \text{ and } i\neq j \right\} \\
		R &= \left\{ (k,l) \mid k,l \in [n],\, k\neq l \text{ and } l\to k \text{ is not an edge of } G \right\} 
	\end{aligned}
\end{equation*}
Note that $L$ corresponds to all positions of $X$ that are outside the first row, first column and the diagonal. Also note that $R$ corresponds to all zero positions of $A(G)$. 

The constraint $[X,A]\in \Theta_G$ from Corollary~\ref{cor:GoodIffX}
gives rise to a system of linear equations in the entries of $X$ with
coefficients that are linear $A$. To see what this expression looks like, consider the two terms of the Lie bracket $[E_{ij},A]$. The product $E_{ij}A$ has its $i$-th row equal to the $j$-th row of $A$ and zeros elsewhere, while $AE_{ij}$ has its $j$-th column equal to the $i$-th column of $A$ and zeros elsewhere. Hence $E_{ij}$ adds a nonzero term to position $(k,l)$ of $[X,A]$ only in the following three cases:
\[
\vspace{2mm}
\begin{array}{rcl}
i=k,\, j\neq l \text{ and } l\to j \in G & \leadsto & -a_{jl} \\
i\neq k,\, j=l \text{ and } i\to k \in G & \leadsto & a_{ki} \\
i=k \text{ and } j=l & \leadsto & a_{kk}-a_{ll}
\end{array}
\]
Define the matrix $B(G)\in \CC^{|R|\times |L|}$ as
\begin{equation}\label{eq:def_B}
\vspace{2mm}
    B(G)_{(k,l),(i,j)} =
    \begin{cases}
        -a_{jl} &\mbox{if } i=k,\, j\neq l \text{ and } l\to j \in E \\
        a_{ki} & \mbox{if } i\neq k,\, j=l \text{ and } i\to k \in E \\
        a_{kk}-a_{ll} &\mbox{if } i=k \text{ and } j=l \\
        0 & \mbox{otherwise. }  \\
    \end{cases}
\end{equation}
From our previous observations, it follows that $B(G)$ is the coefficient matrix corresponding to the system of equations obtained from $[X,A]\in \Theta_G$. 

Let $x\in \CC^{|L|}$ be the vector of coefficients $x_{ij}$, $(i,j)\in L$, then the linear system corresponding to $[X,A]\in \Theta_G$ is given by
\[B(G)x = 0.\]
It follows that each solution $x\in \CC^{|L|}$ gives rise to a class $[X]\in V_A\,/(Z_{\gl_n}(A)+\calD_n)$ and vice versa. Furthermore, $x=0$ if and only if $X=0$. Combining these observations with the fact that $B(G)$ has a nontrivial kernel if and only if its rank is less than $|L|$, we obtain the following theorem.

\begin{thm:GoodIffMfullrank}
Let $G=(V,E)$ be a graph satisfying Assumptions~\ref{ass:i/o-compartment}-\ref{ass:leak}.
Then $G$ has an identifiable scaling reparametrization if and only if the  matrix $B(G)$ as defined in \eqref{eq:def_B} has full column rank.
\end{thm:GoodIffMfullrank}
If $|R|<|L|$, then $B(G)$ certainly has rank smaller than $|L|$. However, this implies that the number of zero positions of $A$ is less than $(n-1)(n-2)$. Since the number of zero positions in $A$ equals $n^2-(n+m)$, we obtain
\[n^2-(n+m) < (n-1)(n-2)\]
and hence $m> 2(n-1)$.
This is equivalent to Lemma~\ref{lem:m<=2n-2}, which stated that if $m>2n-2$ then $G$ does not have the expected dimension.

\begin{re}
This condition is related to the existence of a perfect matching in the bipartite (undirected) graph $H(G)=(L\cup R,E)$ whose edges are defined by
\begin{equation*} \label{eq:defE}
    E = \left\{ ((i,j),(k,l)) \mid [E_{ij},A] \text{ is nonzero on position } (k,l) \right\}.
\end{equation*}
If we define edge weights for the edges in the bipartite graph $H(G)$ by
\[ w((i,j),(k,l)) = [E_{ij},A]_{kl} \]
with $(i,j)\in L$ and $(k,l)\in R$, then $B(G)$ is the weighted bi-adjacency matrix corresponding to $H(G)$.
So if $G$ has an identifiable scaling reparametrization, then $B(G)$ has full rank and therefore there exists an $L$-saturating matching in $H$ \cite{tutte}.
In other words, an $L$-saturating matching in $H(G)$ is a necessary condition for $G$ to have the expected dimension. However, this is not a sufficient condition: Example~\ref{ex:counterex} shows that although an $L$-saturating matching in $H$ exists, the matrix $B(H)$ does not have rank $|L|$.
\end{re}

\begin{ex}\label{ex:counterex}
Let $G$ be the graph given in Figure~\ref{fig:counterex} with its corresponding bipartite graph $H(G)$ in Figure~\ref{fig:counterex-bipartite}, where the thick edges represent an $L$-saturating matching $M$. Note that $|L|=|R|$, so this is actually a perfect matching. The matrix $B(G)$ is of the form given in Figure~\ref{fig:counterex-matrix}.
One can check that the last six columns of this matrix are linearly dependent, hence $B(G)$ has rank 11 while $|L|=12$. In other words, $G$ does not have the expected dimension.
\end{ex}

\begin{figure}[tb]
\centering
\begin{subfigure}[t]{0.52\textwidth}
\centering
\includegraphics[height=1.8cm]{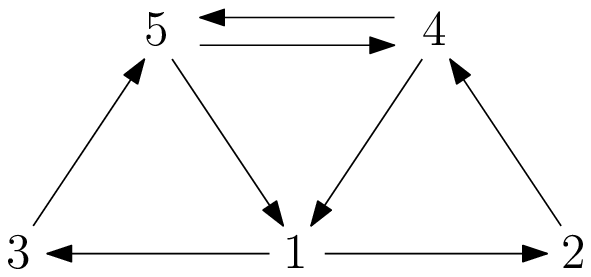}
\caption{$G$}
\label{fig:counterex}
\end{subfigure}\vspace{.75cm}
~
\begin{subfigure}[t]{0.99\textwidth}
\centering
\includegraphics[height=3.3cm]{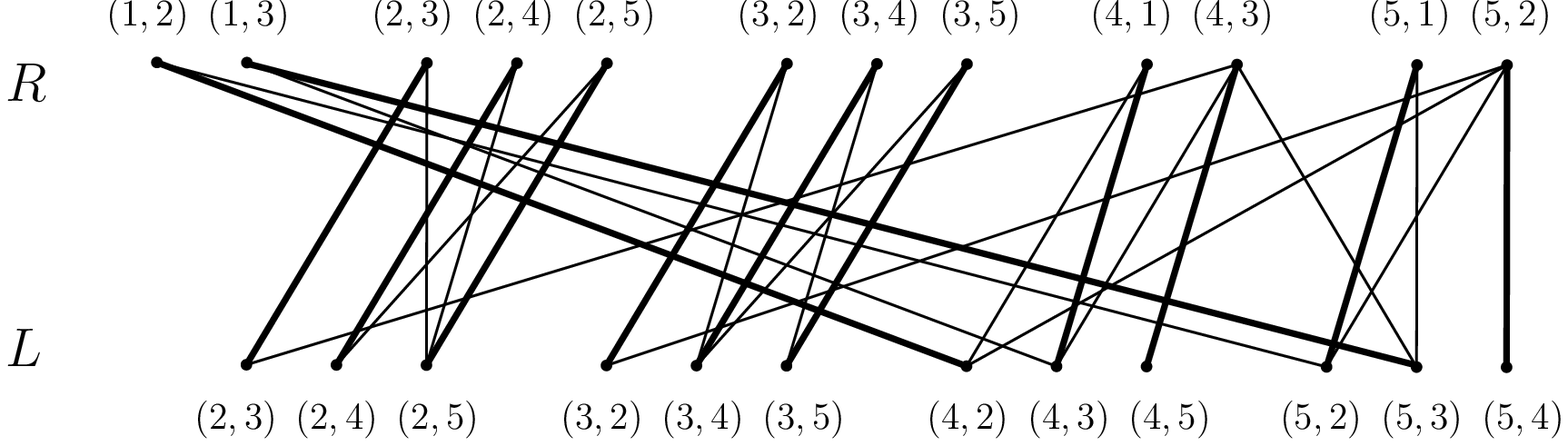}
\caption{$H(G)$}
\label{fig:counterex-bipartite}
\end{subfigure}\vspace{.75cm}
~
\begin{subfigure}[t]{0.99\textwidth}
\centering
\begin{equation*}
\scalemath{0.66}{
\bbordermatrix{
~ &\scriptstyle(2,3)&\scriptstyle(2,4)&\scriptstyle(2,5)&\scriptstyle(3,2)&\scriptstyle(3,4)
&\scriptstyle(3,5)&\scriptstyle(4,2)&\scriptstyle(4,3)&\scriptstyle(4,5)&\scriptstyle(5,2)
&\scriptstyle(5,3)&\scriptstyle(5,4)\cr
\scriptstyle(1,2)&0 & 0 & 0 & 0 & 0 & 0 & a_{14} & 0 & 0 & a_{15} & 0 & 0 \cr
\scriptstyle(1,3)&0 & 0 & 0 & 0 & 0 & 0 & 0 & a_{14} & 0 & 0 & a_{15} & 0 \cr
\scriptstyle(2,3)& a_{22}-a_{33} & 0 & -a_{53} & 0 & 0 & 0 & 0 & 0 & 0 & 0 & 0 & 0 \cr
\scriptstyle(2,4)&0 & a_{22}-a_{44} & -a_{54} & 0 & 0 & 0 & 0 & 0 & 0 & 0 & 0 & 0 \cr
\scriptstyle(2,5)&0 & -a_{45} & a_{22}-a_{55} & 0 & 0 & 0 & 0 & 0 & 0 & 0 & 0 & 0 \cr
\scriptstyle(3,2)&0 & 0 & 0 & a_{33}-a_{22} & -a_{42} & 0 & 0 & 0 & 0 & 0 & 0 & 0 \cr
\scriptstyle(3,4)&0 & 0 & 0 & 0 & a_{33}-a_{44} & -a_{54} & 0 & 0 & 0 & 0 & 0 & 0 \cr
\scriptstyle(3,5)&0 & 0 & 0 & 0 & -a_{45} & a_{33}-a_{55} & 0 & 0 & 0 & 0 & 0 & 0 \cr
\scriptstyle(4,1)&0 & 0 & 0 & 0 & 0 & 0 & -a_{21} & -a_{31} & 0 & 0 & 0 & 0 \cr
\scriptstyle(4,3)&a_{42} & 0 & 0 & 0 & 0 & 0 & 0 & a_{44}-a_{33} & a_{53} & 0 & a_{45} & 0 \cr
\scriptstyle(5,1)&0 & 0 & 0 & 0 & 0 & 0 & 0 & 0 & 0 & -a_{21} & -a_{31} & 0 \cr
\scriptstyle(5,2)&0 & 0 & 0 & a_{53} & 0 & 0 & a_{54} & 0 & 0 & a_{55}-a_{22} & 0 & -a_{42}
}
}
\end{equation*}
\caption{$B(G)$}
\label{fig:counterex-matrix}
\end{subfigure}
\caption{Example \ref{ex:counterex}: a graph $G$ with an $L$-saturating matching, yet $B(G)$ does not have full rank.}
\end{figure}

\begin{re}
	The dimension criterion given by Meshkat and Sullivant allows us to check whether a given graph has the expected dimension or not, by computing the double characteristic polynomial map $c$ and determining the rank of $d_A c$ in sufficiently general $A\in \Theta_G$. This can be done by substituting random parameter values from some large (but finite) set $S$ and evaluating the rank of $d_A c$ in this point. The Schwarz-Zippel lemma ensures that by taking $S$ large enough, the probability of a false negative can be made arbitrarily small.
	
	Our new criterion suggests a different randomized algorithm: we construct the matrix $B(G)$ as defined in equation~\eqref{eq:def_B} and compute its generic rank. Again, this can be done by substituting random parameter values from a sufficiently large set $S$. Using this algorithm, we avoid calculating the double characteristic polynomial map and its differential map.
\end{re}

\section{Properties and constructions}
\label{sec:constructions}
In the previous section we have seen a new criterion to decide whether a given graph has the expected dimension, i.e. whether there exists an identifiable scaling reparametrization. We shall now consider the question of how we can extend a given graph with the expected dimension by adding vertices and edges, such that the resulting graph has the expected dimension as well. Some constructions satisfying this property were already presented in \cite{MS}, but using Theorem \ref{thm:GoodIffMfullrank} we can derive stronger results. This section is concluded with some computational results for graphs on four and five vertices.

\subsection{Definitions and earlier results} \label{sec:Definitions}

\begin{de} \label{def:exchange}
A graph $G$ is said to have an \emph{exchange with} $i\in
\{2,\ldots,n\}$ if both $1\to i$ and $i\to 1$ are edges in $G$. More
generally, a graph has an \emph{exchange} if there exists $i\in V$ such that $G$ has an exchange with $i$.
\end{de}

If a graph has an exchange, one of the operations that we can apply is the collapse of two vertices:
\begin{de} Given a graph $G=(V,E)$ that has an exchange with $i$, the \emph{collapsed graph} $G'=(V',E')$ is the graph in which vertex $1$ and $i$ have been identified, with $V'=V\setminus\{i\}$. An edge $u\to v$ appears in $G'$ if $u\to v$ appears in $G$, or if $v=1$ and $u\to i$ is an edge in $G$, or if $u=1$ and $i\to v$ is an edge in $G$.
\end{de}
Figure \ref{fig:exchange_and_line} illustrates an exchange with vertex $2$ in $G$ and the collapsed graph $G'$.
When collapsing two arbitrary vertices, it is hard to tell whether the resulting graph will have the expected dimension or not. In some special cases where $G$ has an exchange with $i$ and $G'$ is obtained by collapsing the exchange, i.e. vertices 1 and $i$ are identified, we \emph{can} predict whether or not the collapsed graph will have the expected dimension.

\begin{center}
    \begin{figure}
    \includegraphics[scale=0.72]{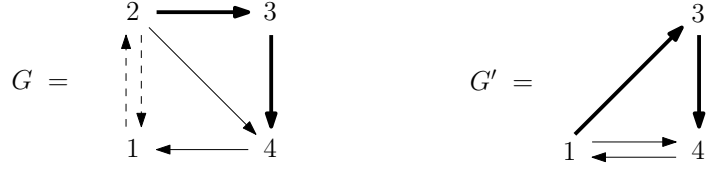}
    \caption{Left: Exchange (dashed) and line segment (thick) in $G$. \\
    Right: The graph $G'$ obtained by collapsing the exchange.}
    \label{fig:exchange_and_line}
    \end{figure}
\end{center}

Identifying two vertices reduces the number of vertices by one. Conversely, we can also increase the number of vertices, for example by subdividing an edge.
\begin{de}
Let $G=(V,E)$ be a graph on $n-1$ vertices and let $i\to j$ be an edge in $G$. The graph $G'=(V',E')$ obtained by \emph{subdividing the edge } $i\to j$ has vertex set $V'=V\cup \{n\}$ and edges $E'= (E\setminus \{i\to j\}) \cup \{i\to n, n\to j\}$.
\end{de}
Another way to increase the number of vertices is by adding a line
segment to $G$: choose two vertices $k,l$ of $G$, add new vertices
$n_1,\ldots,n_s$, and add the edges of the path $(k, n_1, n_2 , \ldots
, n_s , l)$. This is called a line segment, as defined below.
\begin{de} \label{def:linesegment}
A \emph{line segment} of length $k\geq 2$ in $G$ is a path $(v_0,v_1,\ldots,v_{k})$ such that $v_0\to v_1, \ldots, v_{k-1}\to v_{k}$ are edges in $G$ and these are the only edges incident to $v_1, \ldots, v_{k-1}$.
\end{de}
Note that given an edge $i\to j$ in $G$, subdividing this edge creates a line segment of length two, since the new vertex $n$ is incident to $i$ and $j$ but no other vertices. Figure~\ref{fig:exchange_and_line} illustrates a line segment of length two in the graph $G$.

\begin{de}
A graph $G=(V,E)$ is \emph{minimally strongly connected} if it is strongly connected and for each edge $e\in E$ the graph $(V,E\setminus\{e\})$ is no longer strongly connected. $G$ is said to be \emph{inductively strongly connected} if there exists some ordering of vertices of the $n$ vertices, say $1,\ldots,n$, such that for each $i\in [n]$ the induced subgraph $G_{\{1,\ldots,i\}}$ containing vertices $1,\ldots,i$ is strongly connected.
\end{de}
Because of Lemma~\ref{lem:m<=2n-2} we assume that the number of edges is at most $2n-2$. We say that a graph is \emph{maximal} if it contains exactly $2n-2$ edges.

Observe that if $G$ is inductively strongly connected, then it must have at least $2n-2$ edges. Hence any inductively strongly connected graph that satisfies the bound on the number of edges $(m\leq 2n-2)$ is maximal.

Meshkat and Sullivant have already proven some constructions to obtain graph with the expected dimension, and derived some properties of such graphs. The proofs of these results can be found in \cite{MS}, and some can also be derived from our results in Sections~\ref{sec:NewConstructions}-\ref{sec:EarDecompositions}.
\begin{prop}[{\cite[Prop.~5.3]{MS}}]
Let $G$ be a strongly connected maximal graph that has the expected dimension. Then $G$ has an exchange.
\end{prop}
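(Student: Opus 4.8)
The plan is to prove the contrapositive through the rank reformulation of Theorem~\ref{thm:GoodIffMfullrank}. The first observation is that maximality pins down the shape of $B(G)$: with $m=2n-2$ we have $|R|=n(n-1)-m=(n-1)(n-2)=|L|$, so $B(G)$ is \emph{square}. Consequently ``$G$ has the expected dimension'' is equivalent to $B(G)$ being nonsingular, and by Corollary~\ref{cor:GoodIffX} this is equivalent to the nonexistence of a nonzero $X\in\gl_n$ with vanishing first row, first column and diagonal and with $[X,A]\in\Theta_G$ for generic $A$. It therefore suffices to prove: a strongly connected, maximal, exchange-free graph admits such a nonzero $X$. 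Maximality is used precisely in making $B(G)$ square, and it cannot be dropped: the directed $n$-cycle is strongly connected and exchange-free yet has the expected dimension (for $n=3$ one checks directly that $B(G)$ has full column rank), and for a non-square $B(G)$ one cannot search for an honest kernel vector.

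Write $O=\{j\ge 2: 1\to j\in E\}$ and $I=\{i\ge 2:i\to1\in E\}$ for the out- and in-neighbourhoods of vertex~$1$; strong connectivity gives $O,I\neq\emptyset$ and the exchange-free hypothesis is exactly $O\cap I=\emptyset$. I would first isolate the role of vertex~$1$: for an admissible $X$ one computes
\[
[X,A]_{1l}=-\sum_{k\in I}a_{1k}X_{kl},\qquad [X,A]_{k1}=\sum_{l\in O}a_{l1}X_{kl},
\]
so the first-row constraints ($l\notin I$) involve only the rows of $X$ indexed by $I$, and the first-column constraints ($k\notin O$) only the columns indexed by $O$; the disjointness $O\cap I=\emptyset$ keeps these two families from interfering. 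The construction I would attempt is to support $X$ on an interior directed path from $I$ to $O$: take a \emph{shortest} directed path $w_0\to w_1\to\cdots\to w_s$ with $w_0\in I$, $w_s\in O$ and all $w_t\ge 2$, so that by minimality no intermediate $w_t$ lies in $I\cup O$, and put $X=\sum_{t}c_t E_{w_{t+1}w_t}$ with coefficients $c_t$ built from the path edge-weights $a_{w_{t+1}w_t}$. Minimality forces the $I$-rows and $O$-columns of $X$ to be empty, so the two displayed families of equations hold automatically; the coefficients are then chosen so that in $[X,A]$ the off-pattern entries produced by consecutive terms telescope, the only survivor being at the wrap-around position $(w_s,w_0)=(o,i)$, where the edge-weights cancel it (for $n=4$ one checks that $X=a_{23}E_{23}+a_{34}E_{34}$ does exactly this). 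The single-term case $s=1$ is precisely the Condition~\ref{cond:support} situation of Lemma~\ref{lem:cond_support}.

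The main obstacle is twofold. First, the telescoping must be verified in $\Theta_G$ in full generality: a priori $[X,A]$ could be nonzero at many positions of $R$, and one must show—going through the three cases of \eqref{eq:def_B} position by position—that under $O\cap I=\emptyset$ and maximality every such entry is either an allowed (edge or diagonal) position or cancels against a neighbouring term; fixing the coefficients $c_t$ correctly for paths of length $s>2$ is the delicate point, and one must also confirm that in the degenerate $s=1$ case the hypotheses really do yield a Condition~\ref{cond:support} pair. Second, and I expect this to be the genuine crux, the required interior $I\to O$ path need not exist: strong connectivity always supplies an interior $O\to I$ path, but the in-neighbourhood can be an interior ``sink region'' from which $O$ is unreachable without passing through vertex~$1$. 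In that complementary case the above support is unavailable and the obstruction must be relocated; I would argue that this sink structure, together with maximality, forces a Condition~\ref{cond:support} configuration among the interior vertices, again producing a kernel element via Lemma~\ref{lem:cond_support}. Throughout, Example~\ref{ex:counterex} is the reminder that a purely matching-theoretic (Hall) argument cannot suffice, so the proof must exhibit the explicit kernel vector rather than merely deny an $L$-saturating matching.
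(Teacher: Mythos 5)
The paper does not prove this statement itself --- it is quoted from \cite{MS} --- so I am judging your argument on its own terms. Your setup is fine: with $m=2n-2$ the matrix $B(G)$ is square, and the reduction to exhibiting a nonzero $X$ with $[X,A]\in\Theta_G$ (Corollary~\ref{cor:GoodIffX}), together with the computation showing that the first-row constraints only see the rows of $X$ indexed by $I$ and the first-column constraints only the columns indexed by $O$, is correct. The gap is in the core construction. Supporting $X$ on a shortest interior path $w_0\to\cdots\to w_s$ controls only the positions of $[X,A]$ whose row and column indices both lie on the path; for an off-path vertex $l$ with $l\to w_t\in E$, position $(w_{t+1},l)$ of $[X,A]$ receives the single contribution $c_t a_{w_t l}$, which nothing can cancel, so $[X,A]\in\Theta_G$ forces $l\to w_{t+1}\in E$ as well (and dually for out-neighbours of $w_{t+1}$). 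That is a chain of Condition~\ref{cond:support}-type containments along the path, and it simply does not follow from ``maximal, strongly connected, exchange-free.'' Concretely, take $n=5$ with edges $1\to2$, $2\to3$, $2\to4$, $2\to5$, $3\to2$, $4\to1$, $4\to3$, $5\to4$: this graph is strongly connected, has $m=8=2n-2$, and has no exchange ($O=\{2\}$, $I=\{4\}$). The unique shortest interior path from $I$ to $O$ is $4\to3\to2$, and your $X=a_{34}E_{34}+a_{23}E_{23}$ gives $[X,A]_{35}=a_{34}a_{45}\neq0$ while $5\to3\notin E$; moreover no pair of vertices satisfies Condition~\ref{cond:support}, so Lemma~\ref{lem:cond_support} is unavailable too. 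A kernel element does exist (after the first-row and first-column constraints kill $x_{42},x_{43},x_{45},x_{32},x_{52}$, one of those six equations is redundant and $7$ unknowns face $6$ equations), but it is not of your sparse path form; nothing in your proposal produces it. Your second, self-acknowledged gap --- that an interior $I\to O$ path need not exist, to be repaired by an unspecified ``sink structure forces Condition~\ref{cond:support}'' argument --- is likewise not a proof.

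The underlying difficulty is that you are trying to exhibit an explicit kernel vector of $B(G)$, which is much more than the statement requires. The efficient route goes through the dimension criterion (Theorem~\ref{thm:DimCriterion}) and the input-output equation (Theorem~\ref{thm:i/o-equation}) directly. Write $P(\lambda)=\det(\lambda I-A)$ and $Q(\lambda)=\det(\lambda I-A_1)$. Expanding $P$ by permutations and cycle types, the terms in which vertex $1$ is absent or fixed assemble to $(\lambda-a_{11})Q(\lambda)$, while every remaining term contains a directed cycle of $G$ through vertex $1$; if $G$ has no exchange such a cycle has length at least $3$, so $R:=(\lambda-a_{11})Q-P$ has degree at most $n-3$, hence at most $n-2$ coefficients. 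Thus $c=(P,Q)$ factors through $(a_{11},\,\text{coeffs of }Q,\,\text{coeffs of }R)\in\CC^{1+(n-1)+(n-2)}=\CC^{2n-2}$, so $\dim\im c\le 2n-2<2n-1=m+1$ for a maximal graph, and $G$ cannot have the expected dimension. This argument shows exactly where maximality enters (for $m<2n-2$ the inequality $m+1\le 2n-2$ leaves room, consistent with the exchange-free directed cycle having the expected dimension) and avoids the matrix $B(G)$ altogether. I recommend either adopting this counting argument or substantially strengthening the combinatorial analysis of the kernel of $B(G)$; as written, the proposal does not prove the proposition.
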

If a graph is not maximal, then an exchange is not a necessary condition for a graph to have the expected dimension. For example, any directed cycle has the expected dimension. This follows immediately from Corollary \ref{cor:MSCimpliesGood} and the fact that a cycle is minimally strongly connected.

The first construction that we consider is to add an exchange to a given graph. The proof in \cite{MS} considers the characteristic polynomials of the corresponding parameter matrices, but this proposition is also an immediate consequence of the fact that a cycle has the expected dimension and Proposition~\ref{prop:GraphUnion} in Section~\ref{sec:NewConstructions}.
\begin{prop}[{\cite[Prop.~5.5]{MS}}] \label{prop:ms_add_exchange}
Let $G$ be a graph on $n$ vertices and construct $G'$ from $G$ by adding a new vertex $1'$ and an exchange $1\to 1', 1'\to 1$. Then the resulting graph $G'$ with input-output node $1'$ has the expected dimension if and only if $G$ has the expected dimension.
\end{prop}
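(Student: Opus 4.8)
The plan is to prove the equivalence directly via Corollary~\ref{cor:GoodIffX}, which characterizes the expected dimension in terms of the (non)existence of a nonzero $X$ whose first row, first column and diagonal vanish and with $[X,A]\in\Theta_G$. The first step is to set up coordinates: label the vertices of $G'$ as $1',1,2,\ldots,n$ with $1'$ the new input-output compartment, and order the rows and columns of $A(G')$ so that $1'$ comes first. Since $1'$ is adjacent only to the old vertex $1$ (through the exchange), $A(G')$ has the block form
\[
A(G') = \begin{bmatrix} a_{1'1'} & a_{1'1}\, e_1^T \\ a_{11'}\, e_1 & \tilde A \end{bmatrix},
\]
where $e_1\in\CC^n$ is the standard basis vector pointing at the old vertex $1$, the scalars $a_{1'1},a_{11'}$ are the two exchange parameters (generically nonzero), and $\tilde A$ is the $n\times n$ block that agrees with $A(G)$ except possibly in its $(1,1)$ entry.

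Next I would parametrize the candidate matrices for $G'$. By Corollary~\ref{cor:GoodIffX}, any relevant $X\in\gl_{n+1}$ has its $1'$-row, $1'$-column and diagonal zero, hence $X=\begin{bmatrix}0&0^T\\0&\hat X\end{bmatrix}$ with $\hat X\in\gl_n$ of zero diagonal. A block computation then yields
\[
[X,A(G')] = \begin{bmatrix} 0 & -a_{1'1}\, e_1^T\hat X \\ a_{11'}\, \hat X e_1 & [\hat X,\tilde A] \end{bmatrix}.
\]
The heart of the argument is reading off the membership $[X,A(G')]\in\Theta_{G'}$. Because $1'$ has only the single in-edge from $1$ and out-edge to $1$, the top-right block is permitted to be nonzero only in its first slot and the bottom-left block only in its first slot. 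As $e_1^T\hat X$ and $\hat X e_1$ are the first row and first column of $\hat X$, and $a_{1'1},a_{11'}\neq 0$ generically, these constraints force the entire first row and first column of $\hat X$ to vanish (the two first slots correspond to the already-zero diagonal entry of $\hat X$ and impose nothing).

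Once $\hat X$ has zero first row, first column and diagonal, its support lies in $\{2,\ldots,n\}\times\{2,\ldots,n\}$ off the diagonal, exactly the shape required of a candidate for $G$; moreover the $(1,1)$-entry of $\tilde A$ never enters $[\hat X,\tilde A]$, so $[\hat X,\tilde A]=[\hat X,A(G)]$ and the surviving constraint is $[\hat X,A(G)]\in\Theta_G$. This gives a linear bijection $X\leftrightarrow\hat X$ between the nonzero matrices satisfying Corollary~\ref{cor:GoodIffX} for $G'$ and those for $G$; the converse direction is immediate, since feeding a valid $\hat X$ for $G$ into the block formula annihilates both off-diagonal blocks. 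Therefore such an $X$ exists for $G'$ if and only if such an $\hat X$ exists for $G$, and Corollary~\ref{cor:GoodIffX} then gives the claimed equivalence. The one step I would check most carefully is the forcing argument above: it relies on the special adjacency of $1'$ to make the off-diagonal blocks proportional to a single row and column of $\hat X$, so that genericity of $a_{1'1},a_{11'}$ cleanly eliminates them; for a vertex with more neighbours this reduction would be considerably more delicate, but here everything else is routine bookkeeping with the block decomposition.
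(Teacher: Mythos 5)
Your argument is correct, and the block computation checks out: with $X=\bigl[\begin{smallmatrix}0&0^T\\0&\hat X\end{smallmatrix}\bigr]$ the off-diagonal blocks of $[X,A(G')]$ are $-a_{1'1}e_1^T\hat X$ and $a_{11'}\hat Xe_1$, the zero pattern of $\Theta_{G'}$ at the $1'$-row and $1'$-column forces (generically) the first row and first column of $\hat X$ to vanish, the $(1,1)$-entry of $\tilde A$ then never enters $[\hat X,\tilde A]$, and one obtains a bijection between the obstructions of Corollary~\ref{cor:GoodIffX} for $G'$ and for $G$. This is, however, a genuinely different route from the one taken here. The paper does not reprove the statement directly: it notes that the original proof in \cite{MS} proceeds by comparing characteristic polynomials of $A(G')$, $A(G')_{1'}$ with those of $A(G)$, $A(G)_1$, and that the proposition also drops out of the more general Proposition~\ref{prop:GraphUnion} by taking the union of the $2$-cycle on $\{1',1\}$ (which, being a cycle, has the expected dimension) with $G$ along the shared vertex $1$; both directions of the ``if and only if'' then follow because Proposition~\ref{prop:GraphUnion} gives the converse implication whenever the second factor fails to have the expected dimension. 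The trade-off is clear: the graph-union proposition is proved by a fairly heavy block-determinant analysis of the weighted bi-adjacency matrix $B(G)$, but it covers gluing along an arbitrary vertex with an arbitrary second graph; your argument exploits the very special adjacency of the pendant vertex $1'$ to get a short, self-contained proof at the level of the commutator condition, but it does not generalize beyond this case (as you yourself observe, a new vertex with more neighbours would not make the off-diagonal blocks proportional to a single row and column of $\hat X$). Two small points worth making explicit if you write this up: $G'$ is strongly connected iff $G$ is, so Corollary~\ref{cor:GoodIffX} applies to both; and a sufficiently general $A'\in\Theta_{G'}$ restricts to a sufficiently general element of $\Theta_G$ because, by Assumption~\ref{ass:leak}, all entries (including the diagonal ones) are independent parameters.
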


The next proposition shows that adding a line segment of length two to a graph with the expected dimension again yields a graph with the expected dimension, under the condition that $G$ has a \emph{`chain of cycles'} containing both vertex $1$ and the line segment. We do not go into details about this concept, because a we will remove this restriction and extend the result to longer line segments in Theorem~\ref{prop:AddLineToG}. 

\begin{prop}[{\cite[Thm.~5.7]{MS}}] \label{prop:MSsimpleEar}
Let $G'$ be a graph that has the expected dimension with $n-1$ vertices. Let $G$ be a new graph obtained from $G'$ by adding a new vertex $n$ and two edges $k\to n$ and $n\to l$ and such that $G$ has a `chain of cycles' containing both 1 and $n$. Then $G$ has the expected dimension.
\end{prop}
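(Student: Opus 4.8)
The plan is to prove the contrapositive through the rank reformulation. By Corollary~\ref{cor:GoodIffX}, $G$ fails to have the expected dimension precisely when, for sufficiently general $A=A(G)\in\Theta_G$, there is a nonzero $X\in\gl_n$ with zero first row, first column and diagonal such that $[X,A]\in\Theta_G$; the same holds for $G'$ with $A'=A(G')$. So I would assume such a certificate $X$ exists for $G$ and manufacture one for $G'$, contradicting the hypothesis that $G'$ has the expected dimension. A preliminary observation makes this sensible: because the new vertex $n$ carries an independent leak (Assumption~\ref{ass:leak}) and the added edges $k\to n,\ n\to l$ do not touch the parameters among the old vertices, deleting row and column $n$ from a sufficiently general $A$ yields a sufficiently general $A'\in\Theta_{G'}$, and the zero pattern of $A'$ is exactly the restriction of that of $A$. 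In particular $G'$ is strongly connected, so Lemma~\ref{lem:EigenvectorNoZeros} applies to $A'$.

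Next I would order $n$ last and compute the commutator in block form. Writing $A=\left[\begin{smallmatrix} A' & c \\ r^T & a_{nn}\end{smallmatrix}\right]$ and $X=\left[\begin{smallmatrix} X' & u \\ v^T & 0\end{smallmatrix}\right]$, the crucial structural input is that $n$ has in-degree and out-degree one: the single edge $k\to n$ forces $r^T=a_{nk}e_k^T$ and the single edge $n\to l$ forces $c=a_{ln}e_l$. The commutator then splits into three informative pieces. The top-left block equals $[X',A']+ur^T-cv^T$, a perturbation of $[X',A']$ of rank at most one, supported on column $k$ and row $l$. The top-right and bottom-left blocks give the auxiliary relations $(a_{nn}I-A')u=-a_{ln}X'_{\cdot l}$ and $v^T(A'-a_{nn}I)=a_{nk}X'_{k\cdot}$ at all coordinates except $l$ and $k$ respectively, where $X'_{\cdot l}$ is the $l$-th column and $X'_{k\cdot}$ the $k$-th row of $X'$; together with $u_1=v_1=0$ these are square linear systems. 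Since the leak parameter $a_{nn}$ is free and independent of $A'$, the matrix $a_{nn}I-A'$ is generically invertible, so $u$ and $v$ are determined as explicit linear functions of $X'$.

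With $u,v$ expressed through $X'$, the remaining task is to read the top-left block as a certificate condition for $G'$. Since $[X,A]\in\Theta_G$, its top-left block lies in $\Theta_{G'}$, so $[X',A']$ agrees with $-ur^T+cv^T$ at every non-edge of $G'$; as these corrections are supported only on column $k$ and row $l$, the requirement $[X',A']\in\Theta_{G'}$ reduces to the vanishing of $u_p$ at non-out-neighbours $p$ of $k$ and of $v_q$ at non-in-neighbours $q$ of $l$. The heart of the argument, and the step I expect to be the main obstacle, is to show that the coupled constraints force $u=v=0$ outright. This is exactly where the chain-of-cycles hypothesis enters: substituting the formulas for $u,v$ back into the auxiliary relations and propagating the induced vanishing along directed paths---using that the eigenvectors of the generic matrix $A'$ have full support by Lemma~\ref{lem:EigenvectorNoZeros}---one shows the perturbation is identically zero. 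Once $u=v=0$, the certificate $X$ is supported entirely in the top-left block, so $X'\neq 0$ inherits the zero first row, column and diagonal and satisfies $[X',A']\in\Theta_{G'}$; by Corollary~\ref{cor:GoodIffX} this contradicts that $G'$ has the expected dimension.

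Finally, I would remark that this argument really says that adjoining the \emph{ear} $k\to n\to l$ preserves the expected dimension, and that the cleanest route avoids the chain-of-cycles bookkeeping by passing to the ear-decomposition picture. One would rather deduce the statement from a gluing construction in the spirit of Proposition~\ref{prop:GraphUnion}, or as the length-two single-ear instance of Theorem~\ref{prop:AddLineToG} and the ear-theoretic Theorem~\ref{thm:CEDimpliesGood}. These reformulations are what ultimately allow one to drop the chain-of-cycles restriction and to permit line segments of arbitrary length.
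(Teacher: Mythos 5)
Your overall strategy (transfer a nonzero certificate $X$ with $[X,A]\in\Theta_G$ from $G$ down to $G'$ via Corollary~\ref{cor:GoodIffX}) is a legitimate reformulation, and your block computation is correct: with $n$ ordered last one indeed has $r^T=a_{nk}e_k^T$, $c=a_{ln}e_l$, the top-left block of $[X,A]$ equal to $[X',A']+ur^T-cv^T$, and the stated auxiliary systems for $u$ and $v$. The genuine gap is exactly the step you flag as ``the main obstacle'': you assert, without argument, that the constraints force $u=v=0$, and that assertion \emph{is} essentially the whole content of the proposition. Two concrete problems. First, membership of the top-left block in $\Theta_{G'}$ only says that $[X',A']_{pk}=-a_{nk}u_p$ at non-out-neighbours $p$ of $k$ and $[X',A']_{lq}=a_{ln}v_q$ at non-in-neighbours $q$ of $l$; your phrase ``the requirement $[X',A']\in\Theta_{G'}$ reduces to the vanishing of $u_p$\dots'' presupposes the conclusion $[X',A']\in\Theta_{G'}$ that you are trying to establish. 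Without $u=v=0$ you have shown neither that $[X',A']\in\Theta_{G'}$ nor that $X'\neq 0$ (a certificate with $X'=0$ but $(u,v)\neq 0$ is not excluded). Second, even the preliminary claim that $u,v$ are determined by $X'$ needs care: the relevant square matrix is not $a_{nn}I-A'$ but the rows $i\neq l$ of $a_{nn}I-A'$ together with $e_1^T$, whose leading coefficient in $a_{nn}$ vanishes when $l\neq 1$, so genericity of the leak alone does not settle invertibility. Finally, the chain-of-cycles hypothesis --- the only place the combinatorics can enter --- is never actually used; it is only gestured at.

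For comparison: the paper does not prove this proposition itself (it is quoted from \cite{MS}); instead it proves the strictly stronger Proposition~\ref{prop:AddLineToG} (line segments of arbitrary length, no chain-of-cycles hypothesis) by a different mechanism. There one works with the matrix $B(G)$ of Theorem~\ref{thm:GoodIffMfullrank} directly in the ``good $\Rightarrow$ good'' direction, arranging $\fixwidehat{B'}$ into a near-block-triangular form and exhibiting a monomial $\det(\fixwidehat{B})\prod_p\det(D_p)\det(D'_p)$ in $\det(\fixwidehat{B'})$ that cannot be cancelled; this avoids solving for the kernel (your $u,v$) altogether. Your closing remark --- that the statement is the $s=1$ instance of Proposition~\ref{prop:AddLineToG} --- coincides with the paper's own position, but as a self-contained argument your primary route is incomplete at its central step.
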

Recall that an inductively strongly connected graph can be constructed by adding the vertices one by one, while in each step the corresponding subgraph is strongly connected. Combining this fact with Proposition~\ref{prop:MSsimpleEar} one can derive the following corollary by induction on the number of vertices.

\begin{cor}[{\cite[Thm.~5.13]{MS}}] \label{cor:ISCisGood}
If $G$ is inductively strongly connected with at most $2n-2$ edges, then $G$ has the expected dimension.
\end{cor}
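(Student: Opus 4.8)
The plan is to prove Corollary~\ref{cor:ISCisGood} by induction on the number of vertices $n$, using Proposition~\ref{prop:MSsimpleEar} as the inductive step. The base case is a single vertex (or equivalently a graph which is trivially strongly connected), which has the expected dimension. For the inductive step, I would exploit the defining property of an inductively strongly connected graph: there is an ordering $1,\ldots,n$ of the vertices such that each induced subgraph $G_{\{1,\ldots,i\}}$ is strongly connected. In particular, deleting the last vertex $n$ from $G=G_{\{1,\ldots,n\}}$ leaves a graph $G'=G_{\{1,\ldots,n-1\}}$ which is again inductively strongly connected (the same ordering restricted to the first $n-1$ vertices witnesses this). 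So by the induction hypothesis $G'$ has the expected dimension, and it remains to analyze how $G$ is obtained from $G'$ by reintroducing vertex $n$.

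The crux is to show that the passage from $G'$ to $G$ is exactly the operation covered by Proposition~\ref{prop:MSsimpleEar}: adding a single new vertex together with two edges $k\to n$ and $n\to l$ forming a line segment. Here I would invoke the edge bound. Since $G$ satisfies $m\leq 2n-2$ and is inductively strongly connected, the earlier observation that inductively strongly connected graphs have at least $2n-2$ edges forces $G$ to be \emph{maximal}, i.e. to have exactly $2n-2$ edges; the same holds for $G'$, which has exactly $2(n-1)-2=2n-4$ edges. Thus vertex $n$ contributes precisely $(2n-2)-(2n-4)=2$ edges to $G$. Because $G_{\{1,\ldots,n\}}$ is strongly connected while $G_{\{1,\ldots,n-1\}}$ already is, vertex $n$ must have at least one incoming and at least one outgoing edge (otherwise $n$ could not lie on directed paths to and from the rest of the graph); having exactly two edges total, these must be a single incoming edge $k\to n$ and a single outgoing edge $n\to l$ with $k,l\in\{1,\ldots,n-1\}$. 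This is exactly a subdivision-type line segment of length two attached to $G'$.

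The remaining hypothesis of Proposition~\ref{prop:MSsimpleEar} is the existence of a ``chain of cycles'' containing both vertex $1$ and the new vertex $n$. Here I would argue that inductive strong connectivity supplies precisely such a structure: since every $G_{\{1,\ldots,i\}}$ is strongly connected, vertex $1$ and vertex $n$ both lie in a common strongly connected graph built up cycle by cycle, which is what the chain-of-cycles condition encodes. With all hypotheses verified, Proposition~\ref{prop:MSsimpleEar} yields that $G$ has the expected dimension, completing the induction.

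I expect the main obstacle to be the verification of the chain-of-cycles hypothesis, since the excerpt deliberately omits the definition of this concept (``We do not go into details about this concept''). The cleanest route is to sidestep reliance on the precise combinatorial definition and instead observe that the inductive construction itself produces the required structure by design: at each stage the strongly connected subgraph $G_{\{1,\ldots,i\}}$ can be decomposed into the previous subgraph plus one new ear, and vertex $1$ is present from the very first stage. If matching the chain-of-cycles definition literally proves delicate, an alternative is to bypass Proposition~\ref{prop:MSsimpleEar} entirely and derive the corollary directly from our stronger Theorem~\ref{prop:AddLineToG} (which removes this restriction), reducing the induction to the unconditional line-segment addition result.
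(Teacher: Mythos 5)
Your proposal is correct and follows essentially the same route as the paper, which likewise derives the corollary by induction on the number of vertices using Proposition~\ref{prop:MSsimpleEar}, relying on the observation that an inductively strongly connected graph with at most $2n-2$ edges is maximal so that each new vertex contributes exactly one incoming and one outgoing edge. Your fallback of invoking Proposition~\ref{prop:AddLineToG} (with $s=1$) instead of the chain-of-cycles hypothesis is sound and is exactly the simplification the paper's stronger result makes available.
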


Meshkat and Sullivant have also formulated a conjecture:
\begin{conj}[{\cite[Conj.~6.6]{MS}}] \label{conj:MS}
Let $G$ be a graph with $n$ vertices, $2n-2$ edges, and an exchange with $i$. Let the collapsed graph $G'$ be the graph where 1 and $i$ have been identified. If $G'$ has $2n-4$ edges with an exchange, then $G$ has the expected dimension if and only if $G'$ has the expected dimension.
\end{conj}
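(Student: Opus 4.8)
The first move is to apply Theorem~\ref{thm:GoodIffMfullrank} to both graphs and exploit that they are maximal. Since $G$ has $n$ vertices and $2n-2$ edges, the number of free off-diagonal zero positions of $A(G)$ equals the number of admissible positions for $X$, i.e. $|R|=|L|=(n-1)(n-2)$, so $B(G)$ is a \emph{square} matrix; likewise $G'$ has $n-1$ vertices and $2(n-1)-2$ edges, so $B(G')$ is square of size $(n-2)(n-3)$. Hence on each side ``has the expected dimension'' is equivalent to the nonvanishing of a determinant, and the whole conjecture reduces to the single statement
\[
\det B(G)\neq 0\quad\Longleftrightarrow\quad \det B(G')\neq 0
\]
for generic parameter values. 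The entire problem is therefore to relate these two determinants.

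\textbf{Block decomposition along the collapsed vertex.} Write $p\in\{2,\ldots,n\}$ for the exchange vertex (this is the vertex called $i$ in the statement; I rename it to free the letter $i$ for column indices). I would split the columns $L$ of $B(G)$ into the pairs $(i,j)$ avoiding $p$ and the $2(n-2)$ pairs of the form $(p,j)$ or $(i,p)$, and split the rows $R$ into the zero positions of $A(G)$ avoiding both $1$ and $p$ and the remaining positions incident to $1$ or $p$. The hypothesis that $G'$ has exactly $2n-4$ edges is decisive here: it forces the collapse to delete only the exchange edges $1\to p$ and $p\to 1$ and to create no coincidences, so that no vertex is simultaneously an in-neighbour of $1$ and of $p$, and none is simultaneously an out-neighbour of both. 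Under this no-collision condition the edges and parameters among the vertices $\{2,\ldots,n\}\setminus\{p\}$ are identical in $G$ and $G'$, which is what lets the ``$p$-avoiding'' part of $B(G)$ be matched, after relabelling, with $B(G')$.

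\textbf{Factoring the determinant.} The goal of the central step is to perform elementary row and column operations, driven by the exchange, that bring $B(G)$ into block-triangular form with diagonal blocks $B(G')$ and a $2(n-2)\times 2(n-2)$ ``exchange block'' $S$, yielding $\det B(G)=\pm\,\det B(G')\cdot\det S$ with $\det S$ generically nonzero. The $p$-incident columns correspond to the commutators $[E_{pj},A]$ and $[E_{ip},A]$; computing these and using that both $a_{p1}$ and $a_{1p}$ are nonzero (the exchange in $G$) together with the generically nonzero diagonal differences $a_{kk}-a_{ll}$ available as pivots because every compartment carries an independent leak (Assumption~\ref{ass:leak}) should let me clear the cross terms and identify $S$ explicitly. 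The exchange hypothesis on $G'$ enters precisely at this point: it is what guarantees that $S$ is generically invertible, so that the factorization delivers the desired equivalence. Alternatively, one can run the same comparison through Corollary~\ref{cor:GoodIffX}, extending a nontrivial $X'$ for $G'$ to a nontrivial $X$ for $G$ (and conversely restricting), where again the $p$-row and $p$-column of $X$ are pinned down using the exchange.

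\textbf{Main obstacle.} The hard part will be the factorization itself, because the naive partition is \emph{not} block triangular: a $p$-avoiding column $(i,j)$ can be nonzero on a row incident to $1$ or $p$ whenever $i\to 1$ or $i\to p$ is an edge, and, more seriously, in $G'$ the merged vertex $1$ absorbs the whole neighbourhood of $p$, so the rows of $B(G')$ touching the merged vertex mix rows $1$ and $p$ of $A(G)$ rather than matching them one-to-one. Controlling these cross-entries, either by an explicit sequence of operations using the exchange parameters or by computing the Schur complement of $S$ and proving it equals $B(G')$ up to a generically invertible factor, is where both exchanges and the no-collision edge count must all be used simultaneously. I expect this Schur-complement identification to be the genuine crux; once it is established, the square-ness of both matrices (from maximality) makes the passage from nonsingularity to the expected dimension immediate, and the statement recovers Proposition~\ref{prop:ms_add_exchange} in the special case where $p$ has only the two exchange edges.
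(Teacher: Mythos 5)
This statement cannot be proved, because it is false, and the paper's treatment of it is not a proof but a refutation: the authors exhibit an explicit counterexample (Figure~\ref{fig:conjecture}). There, $G$ is a strongly connected graph on $n=6$ vertices with $2n-2=10$ edges and an exchange with vertex $2$; one checks via Theorem~\ref{thm:GoodIffMfullrank} that $G$ has the expected dimension. Collapsing the exchange yields a graph $G'$ with $2n-4=8$ edges that again has an exchange, so all hypotheses of the conjecture are met; yet $A(G')$ satisfies Condition~\ref{cond:support} (the pair of vertices $4$ and $6$ in the collapsed graph), so $G'$ does \emph{not} have the expected dimension. Hence the ``only if'' direction of the biconditional fails. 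Only the converse direction ($G'$ good $\Rightarrow$ $G$ good) survives as an open conjecture, with the partial result of Proposition~\ref{prop:PartialConj} covering the case where $G$ has a nontrivial ear decomposition.

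Concretely, the step in your plan that must fail is the proposed factorization $\det B(G)=\pm\det B(G')\cdot\det S$ with $\det S$ generically nonzero: in the counterexample $\det B(G)\neq 0$ while $\det B(G')=0$, so no such identity can hold, and the Schur complement of the exchange block cannot equal $B(G')$ up to a generically invertible factor. The structural reason is the one you yourself flag as the ``main obstacle'': the merged vertex of $G'$ absorbs the entire neighbourhood of $p$, so the rows of $B(G')$ indexed by zero positions in the first row and column of $A(G')$ genuinely mix the constraints coming from rows $1$ and $p$ of $A(G)$, and this mixing can create new linear dependencies (equivalently, new support containments as in Condition~\ref{cond:support}) that are absent in $B(G)$. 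Your preliminary reductions are sound --- both matrices are indeed square by maximality, and the edge count $2n-4$ does force the no-collision condition on the neighbourhoods of $1$ and $p$ --- but these facts are not enough to prevent the collapse from destroying the expected dimension. Your closing remark that the argument would recover Proposition~\ref{prop:ms_add_exchange} should have been a warning sign: that proposition is an equivalence only in the very special case where $p$ carries no edges besides the exchange, and it does not generalize.
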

We have constructed a counterexample, showing that this conjecture certainly does not hold in both directions.
Consider the graph $G$ in Figure~\ref{fig:conjecture1}, which is strongly connected, has an exchange, and satisfies $m=2n-2$. Its parameter matrix is given by
\begin{equation*}
A(G) = \begin{bmatrix}
a_{11} & a_{12} & 0 & a_{14} & 0 & 0 \\
a_{21} & a_{22} & a_{23} & 0 & 0 & a_{26} \\
0 & a_{32} & a_{33} & 0 & 0 & 0 \\
0 & 0 & 0 & a_{44} & a_{45} & 0 \\
0 & 0 & a_{53} & 0 & a_{55} & 0 \\
0 & 0 & 0 & a_{64} & a_{65} & a_{66}
\end{bmatrix}.
\end{equation*}
One can check that $G$ has the expected dimension using \textsc{Mathematica} and the algorithm based on Theorem~\ref{thm:GoodIffMfullrank}. After collapsing the exchange with 2, we obtain the graph $G'$ given in Figure~\ref{fig:conjecture2}. This graph has parameter matrix
\begin{equation*}
A(G') = \begin{bmatrix}
a_{11} & a_{13} & a_{14} & 0 & a_{16} \\
a_{31} & a_{33} & 0 & 0 & 0 \\
0 & 0 & a_{44} & a_{45} & 0 \\
0 & a_{53} & 0 & a_{55} & 0 \\
0 & 0 & a_{64} & a_{65} & a_{66}
\end{bmatrix}.
\end{equation*}
We see that $G'$ is again strongly connected, has an exchange and satisfies $m=2n-4$. However, $G'$ does not have the expected dimension. This follows from the fact that $A(G')$ satisfies Condition \ref{cond:support}: the support of the column corresponding to vertex 6 is contained in the column corresponding to vertex 4, and for the rows vice versa. So the fact that $G$ has the expected dimension does not imply that $G'$ has the expected dimension as well.
\begin{figure}
\centering
\begin{subfigure}[t]{0.51\textwidth}
\centering
\includegraphics[height=2cm]{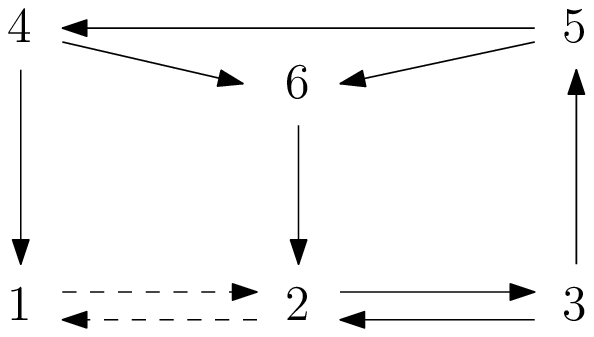}
\caption{Graph $G$}
\label{fig:conjecture1}
\end{subfigure}%
~
\begin{subfigure}[t]{0.48\textwidth}
\centering
\includegraphics[height=2cm]{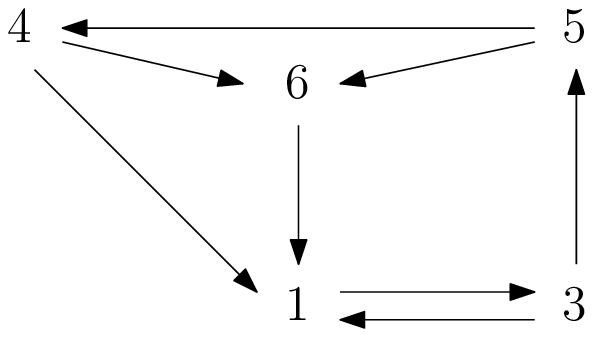}
\caption{Collapsed graph $G'$}
\label{fig:conjecture2}
\end{subfigure}
\caption{Counterexample to Conjecture~\ref{conj:MS}}
\label{fig:conjecture}
\end{figure}

The other direction remains a conjecture, although a partial result follows from Proposition~\ref{prop:PartialConj}.

\subsection{New constructions} \label{sec:NewConstructions}
In this section we present two new constructions of graphs with the expected dimension. These proofs rely on Theorem~\ref{thm:GoodIffMfullrank}, so the matrix $B=B(G)$ plays an important role in this section.
Recall that the rows of $B$ are indexed by pairs $(i,j)$ corresponding to the zero positions of $A(G)$, and the columns are indexed by pairs $(k,l)$ with $k,l\in [n], k\neq l$ and $k,l\neq 1$. These column indices correspond to the entries of $X\in \gl_n$ outside the first row, the first column and the diagonal. The entries of $B$ are given in \eqref{eq:def_B}.

We will refer to the entry $B(G)_{(k,l),(i,j)}$ as the entry (or position) indexed by $(k,l),(i,j)$, where $(k,l)$ is the row index and $(i,j)$ the column index. We start with some basic observations on the structure of $B=B(G)$. For an entry to be nonzero, the two pairs representing the row and column index must have at least one coordinate in common. Entries indexed by $(i,\cdot),(\cdot,i)$ or $(\cdot,i),(i,\cdot)$ are zero, since neither the rows nor the columns of $B$ have indices $(i,i)$. Furthermore, the column indices have no coordinate equal to 1, hence a nonzero entry in the row indexed by $(i,1)$ must be of the form $a_{j1}$. Similarly, a nonzero entry in the row indexed by $(1,i)$ must be of the form $a_{1j}$. Also note that every entry in a given row or column of $B$ contains a different parameter.

Both proofs have the same structure: to show that the matrix $B$ has full rank, we group the rows and columns such that we obtain a block matrix. Then we argue that each of the diagonal blocks has full rank, and that the non-diagonal blocks cannot cancel this term from the determinant of $B$. When constructing a graph $G'$ from $G$, we choose the blocks such that one of the diagonal blocks is of the form $B(G')$. The other diagonal blocks will be similar to the parameter matrix $A$, except that some rows and columns may be missing. Therefore the following lemma will be very useful:

\begin{lem} \label{lem:A_lFullRank}
	Let $G$ be a strongly connected graph on $n$ vertices and let $A\in \Theta_G$. For $k,l\in [n]$ and $\alpha \in \CC$ define $A_{k,l,\alpha}$ to be the submatrix of $A$ obtained by replacing the diagonal entries $a_{ii}$ by $a_{ii}-\alpha$ for all $i\in [n]$, and removing row $k$ and column $l$. Then for sufficiently general $A$ the determinant of $A_{k,l,\alpha}$ is nonzero.
\end{lem}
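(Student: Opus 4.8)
The plan is to show that $\det A_{k,l,\alpha}$, regarded as a polynomial in the entries $a_{ij}$, is not identically zero; since a nonzero polynomial is nonvanishing on a Zariski-dense open subset, this yields the claim for sufficiently general $A$. First I would absorb the shift $\alpha$: by Assumption~\ref{ass:leak} the diagonal entries $a_{ii}$ are independent free parameters, so the shifted entries $a_{ii}-\alpha$ are again independent free parameters. Hence $A_{k,l,\alpha}$ has exactly the form of the matrix obtained from a generic element of $\Theta_G$ by deleting row $k$ and column $l$, and I may treat all of its nonzero entries as algebraically independent variables.

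The key observation I would exploit is that each parameter occupies a single position in $A$: the off-diagonal parameter $a_{ij}$ appears only in entry $(i,j)$, and the (shifted) diagonal parameter only in entry $(i,i)$. Expanding the determinant of the $(n-1)\times(n-1)$ matrix $A_{k,l,\alpha}$ by the Leibniz formula as a signed sum over bijections $\sigma\colon [n]\setminus\{k\}\to[n]\setminus\{l\}$, every nonzero term $\prod_i (A_{k,l,\alpha})_{i,\sigma(i)}$ is a squarefree monomial in pairwise distinct variables, and this monomial records precisely the positions used, hence determines $\sigma$ uniquely. Distinct bijections therefore contribute distinct monomials, so no cancellation can occur. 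Consequently it suffices to exhibit a single bijection $\sigma$ all of whose entries $(A_{k,l,\alpha})_{i,\sigma(i)}$ are nonzero.

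To build such a bijection I would use strong connectivity of $G$: choose a simple directed path $k=w_0\to w_1\to\cdots\to w_r=l$ in $G$ (trivial, with $r=0$, when $k=l$), set $\sigma(w_t)=w_{t-1}$ for $t=1,\dots,r$, and set $\sigma(v)=v$ for every vertex $v$ off the path. Then $\sigma$ is a bijection from $[n]\setminus\{k\}$ onto $[n]\setminus\{l\}$, since its domain is covered by the off-path vertices together with $w_1,\dots,w_r$ and its image by the off-path vertices together with $w_0,\dots,w_{r-1}$. Its path entries $(A_{k,l,\alpha})_{w_t,w_{t-1}}=a_{w_t,w_{t-1}}$ are nonzero because $w_{t-1}\to w_t$ is an edge of $G$, and its off-path entries are the nonzero diagonal entries $a_{vv}-\alpha$. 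This is exactly the required transversal.

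I expect the only genuine obstacle to be the danger of cancellation among the many terms of the determinant; this is dissolved entirely by the fact that each parameter sits in a unique position, which turns the determinant into a sum of pairwise distinct monomials. Once that is in hand, producing the nonzero transversal is a routine consequence of strong connectivity, and the verification that $\sigma$ is a bijection is a short bookkeeping check.
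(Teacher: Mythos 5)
Your proposal is correct and follows essentially the same route as the paper: both arguments use strong connectivity to extract a simple path from $k$ to $l$, take the transversal consisting of the path's edge parameters together with the shifted diagonal entries of the off-path vertices, and note that because every entry of $A_{k,l,\alpha}$ is a distinct parameter this term of the determinant cannot be cancelled. Your Leibniz-formula phrasing of the no-cancellation step is a slightly more explicit version of the paper's observation, but the underlying argument is identical.
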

\begin{proof}
	Since $G$ is strongly connected, there exists a path $p$ from $k$ to $l$, say
	\[p\,=\, (k= v_1, v_2, \ldots, v_{r-1},  v_r= l).\]
	Let $v_{r+1},\ldots,v_{n}$ be the vertices of $G$ that do not appear in $p$. Rearrange the rows and columns of $A_{k,l,\alpha}$ such that the row indices are ordered as
	\[v_2,v_3,\ldots,v_r,v_{r+1},\ldots,v_{n}\]
	and the column indices are ordered as
	\[v_1,v_2\ldots,v_{r-1},v_{r+1},\ldots,v_{n}.\]
	Then $A_{k,l,\alpha}$ has diagonal
	\begin{equation*}
	(a_{v_2v_1},a_{v_3v_2},\ldots,a_{v_rv_{r-1}},a_{v_{r+1}v_{r+1}}-\alpha,\ldots,a_{v_{n}v_{n}}-\alpha)
	\end{equation*}
	whose entries are nonzero for sufficiently general $A$. All entries of $A_{k,l,\alpha}$ correspond to different parameters, so taking the diagonal entries large enough will make the determinant of $A_{k,l,\alpha}$ nonzero. Having full rank is a Zariski open condition on the parameters, so it follows that $A_{k,l,\alpha}$ has full rank for sufficiently general $A$.
\end{proof}

Now we will derive our first new construction, taking the union of two graphs which have exactly one vertex in common. This vertex has to be the input-output compartment of at least one of the two graphs. The resulting graph inherits only one input-output compartment, such that it still satisfies Assumption~\ref{ass:i/o-compartment}. 

\begin{prop} \label{prop:GraphUnion}
Let $G$ be of the form $(V'\cup V'',E'\cup E'')$ for some graphs $G'=(V',E'),G''=(V'',E'')$, such that $V'\cap V'' = \{v\}$, $E'\cap E'' = \emptyset$ and $1\in V'$. Let $1$ be the input-output compartment of $G'$, while $G''$ has input-output compartment $v$, and let $G$ inherit 1 as its unique input-output compartment. Then $G$ has the expected dimension if both $G'$ and $G''$ have the expected dimension. Conversely, if $G''$ does not have the expected dimension, then neither does $G$.
\end{prop}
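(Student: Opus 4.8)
The plan is to prove both implications through Theorem~\ref{thm:GoodIffMfullrank}, i.e.\ by analysing the column rank of $B(G)$, reading off witnesses via Corollary~\ref{cor:GoodIffX} where convenient. Write $W'=V'\setminus\{v\}$ (which contains vertex $1$) and $W''=V''\setminus\{v\}$, and set $n'=|V'|$, $n''=|V''|$. Since $E'\cap E''=\emptyset$ and $V'\cap V''=\{v\}$, the graph $G$ has no edges between $W'$ and $W''$, so $A=A(G)$ is block structured and the only coupling between the two sides occurs at the shared vertex $v$. I expect this, together with the fact that $v$ is the \emph{input--output} vertex of $G''$ but an ordinary vertex of $G'$, to drive the whole argument. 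One may assume $v\neq 1$; the coincident case $v=1$ is analogous and strictly easier, because then $v$ is excluded from every column index and no ordinary vertex straddles the two sides.

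I would treat the converse first, as it is the easy half. If $G''$ does not have the expected dimension, then Corollary~\ref{cor:GoodIffX} applied to $G''$ (with $v$ relabelled as its input--output compartment) yields a nonzero matrix $X''$ indexed by $V''$, with zero $v$-row, $v$-column and diagonal, such that $[X'',A'']\in\Theta_{G''}$ for $A''=A(G'')$. Lift $X''$ to $X\in\gl_n$ equal to $X''$ on $V''\times V''$ and zero elsewhere. Because $X''$ kills the $v$-row and $v$-column, a direct block computation shows $XA$ and $AX$ are supported inside $V''\times V''$ and that $[X,A]$ agrees there with $[X'',A'']$; in particular the value $a_{vv}$, which differs between $A$ and $A''$, never enters. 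Hence $[X,A]\in\Theta_{G''}\subseteq\Theta_G$, and since vertex $1$ lies in $W'$ the matrix $X$ has zero first row, first column and diagonal, so $G$ does not have the expected dimension by Corollary~\ref{cor:GoodIffX}. This zero-padding trick \emph{fails} for a witness of $G'$, whose $v$-row and $v$-column need not vanish and would spill mass onto the non-edges between $W'$ and $W''$; this is exactly why the converse is asserted only for $G''$.

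For the forward implication I would show that $B(G)$ has full column rank by exhibiting an invertible square submatrix. Partition the columns $L$ into $L'$ (both indices in $V'\setminus\{1\}$), $L''$ (both in $W''$), and the mixed blocks $L^A$ (first index in $V'\setminus\{1\}$, second in $W''$) and $L^B$ (the transposed type); partition the non-edge rows $R$ into $R'$ (inside $V'$), $R''$ (inside $V''$), and the cross blocks $\mathrm I=W'\times W''$ and $\mathrm{II}=W''\times W'$. Here $|L^A|=|L^B|=|\mathrm I|=|\mathrm{II}|=(n'-1)(n''-1)$. On the diagonal, the block $R''\times L''$ is \emph{literally} $B(G'')$ (no $a_{vv}$ enters, as $v\notin W''$), while $R'\times L'$ is $B(G')$ with the free diagonal parameter $a_{vv}$ taking a generic value; both have full column rank by hypothesis, so I can choose row subsets $R^*_{G'}\subseteq R'$ and $R^*_{G''}\subseteq R''$ realising that rank. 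For each fixed $l\in W''$ the diagonal block $\mathrm I\times L^A$ restricts to the matrix $A'_{v,1,a_{ll}}$ of Lemma~\ref{lem:A_lFullRank} (delete row $v$ and column $1$ of $A(G')$, shift the diagonal by $a_{ll}$), coupled across different $l$ by the $G''$-adjacency; the same dominant-diagonal reasoning shows the whole block $\mathrm I\times L^A$ is invertible for generic $A$, and likewise $\mathrm{II}\times L^B$. Finally one records the clean vanishings $R'\times L''=R''\times L'=0$ and $\mathrm I\times L''=\mathrm I\times L^B=\mathrm{II}\times L''=\mathrm{II}\times L^A=0$, each because the relevant index pairs share no coordinate. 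The chosen row set $R^\ast=R^*_{G'}\sqcup R^*_{G''}\sqcup\mathrm I\sqcup\mathrm{II}$ then has size exactly $|L'|+|L''|+|L^A|+|L^B|=|L|$.

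The hard part is that this decomposition is \emph{not} block triangular: the surviving off-diagonal blocks $R'\times L^A$, $R'\times L^B$, $R''\times L^A$, $R''\times L^B$, $\mathrm I\times L'$ and $\mathrm{II}\times L'$ are all nonzero, and each is concentrated in the rows or columns carrying the coordinate $v$. Thus the coupling through the shared vertex $v$ cannot be removed by reordering, and one cannot simply multiply the determinants of the diagonal blocks. To finish I would take $B^\ast=B(G)[R^\ast,L]$ and show $\det B^\ast\not\equiv 0$ by isolating, in the Leibniz expansion, the monomial produced by combining the distinguished matchings of the four diagonal blocks (the two from $B(G')$ and $B(G'')$, and the two path-plus-shift matchings furnished by Lemma~\ref{lem:A_lFullRank}); a dominance argument in the diagonal parameters $a_{ii}$, exactly in the style of the proof of Lemma~\ref{lem:A_lFullRank}, should show that this monomial cannot be cancelled by the $v$-coupled off-diagonal entries. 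Controlling these $v$-coordinate rows and columns in the non-cancellation step is where I expect essentially all of the real work to lie.
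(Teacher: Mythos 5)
Your block decomposition of $B(G)$ is exactly the one the paper uses: the same four column classes $L',L'',L^A,L^B$, the same four row classes, and the same identification of the diagonal blocks with $B(G')$, $B(G'')$ and two square matrices assembled from the $A'_{v,1,\alpha}$ of Lemma~\ref{lem:A_lFullRank}. Your proof of the converse is genuinely different from the paper's and is correct: the paper instead observes that the columns indexed by $L''$ are supported only in the rows indexed by $R''$, so that every maximal minor of $B(G)$ has a maximal minor of $B(G'')$ as a factor; your lifting of a kernel witness through Corollary~\ref{cor:GoodIffX} is a cleaner way to reach the same conclusion, and your remark about why zero-padding fails for a witness of $G'$ correctly explains the asymmetry in the statement.

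The gap is in the decisive non-cancellation step of the forward direction, which you explicitly defer, and the tool you propose for it (dominance in the diagonal parameters $a_{ii}$) is not the mechanism that closes the argument. What actually finishes the proof is a parameter-separation observation you have not extracted: the surviving off-diagonal blocks ($R'\times L^A$, $R''\times L^A$, $\mathrm I\times L'$, $\mathrm{II}\times L'$, and so on) have nonzero entries \emph{only} of the form $\pm a_{vj}$ and $\pm a_{jv}$ with $j\in V''\setminus\{v\}$, i.e.\ the parameters of the edges between $v$ and $W''$ --- and these parameters occur nowhere in $B(G')$ nor in the two blocks $\mathrm I\times L^A$, $\mathrm{II}\times L^B$ (whose entries are $A'$-parameters, shifted diagonals, and $A''$-parameters with both indices in $W''$). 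Hence, after pulling out the factor coming from $B(G'')$ (using that the $L''$ columns live only in the $R''$ rows), every Leibniz term that uses at least one off-diagonal entry is divisible by some $a_{vj}$ or $a_{jv}$, while no monomial of the product of the remaining three diagonal-block determinants is; so the distinguished product survives. Without this observation a dominance argument in the $a_{ii}$ is not obviously sufficient, since the off-diagonal entries contain no diagonal parameters but neither do many of the terms of $\det(\mathrm I\times L^A)\det(\mathrm{II}\times L^B)$ that you would still have to control. A minor further point: several of your ``clean vanishings'' ($R''\times L'$, $\mathrm I\times L''$, $\mathrm{II}\times L''$) do not hold because the index pairs share no coordinate --- they can share one --- but because the resulting parameter $a_{ki}$ or $a_{jl}$ would correspond to an edge between $W'$ and $W''$, which does not exist.
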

\begin{proof}
Let $A=A(G)$, $A'=A(G')$ and $A''=A(G'')$. The input-output compartment of $G''$ is vertex $v$, so if we order the vertices of $G'$ such that the last row and column of $A'$ correspond to vertex $v$, then $A$ is of the following form:
\begin{center}
\includegraphics[scale=0.65]{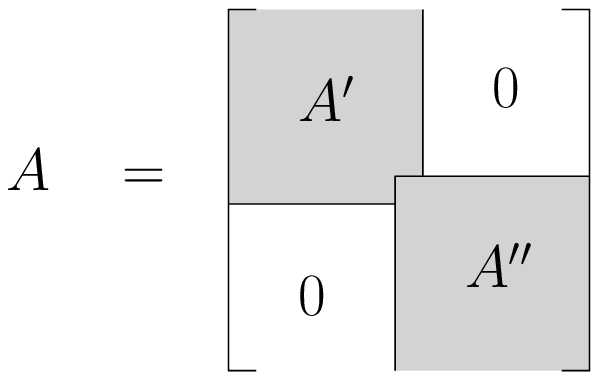}
\end{center}
The matrices $A',A''$ intersect at only one position, which is the entry containing $a_{vv}$.
Let $B=B(G)$, $B'=B(G')$ and $B''=B(G'')$. First, we will derive that $B$ has full rank whenever both $B',B''$ have full rank, thus proving the first part of the proposition. To do so, we partition the matrix $B$ into blocks, such that some of these blocks are equal to $B',B''$. Recall that the rows of $B$ are indexed by the zero entries of $A$, and the columns of $B$ are indexed by the pairs $(i,j)$ with $i,j\neq 1$ and $i\neq j$. We find a block partition of $B$ by partitioning both $A$ and $X$, since this gives us a partition of the row and column indices. Let $A',A'',A_3,A_4$ be blocks of $A$ and let $X',X'',X_3,X_4$ be blocks of $X$ of the form:
\begin{center}
\includegraphics[scale=0.65]{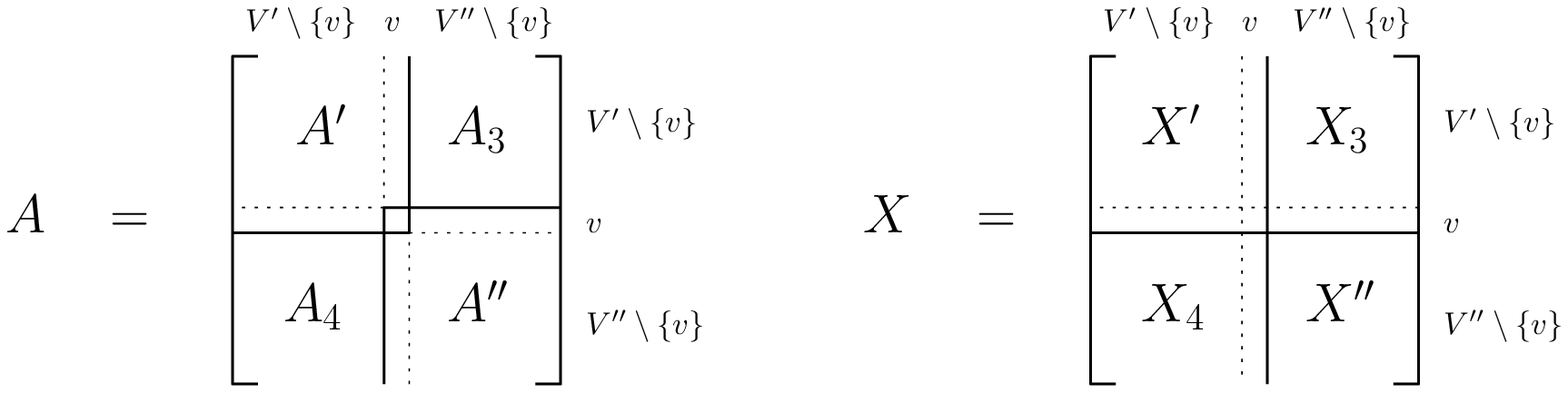}
\end{center}
The solid lines indicate the partitioning and the dotted lines indicate the position of the row and column indexed by vertex $v$. We obtain a partition of the rows and columns of $B$ by distinguishing between the four blocks of $A$ and $X$, respectively.
Note that the blocks of $A$ do not form a partition of the matrix $A$, because $A'$ and $A''$ intersect. However, they intersect at a nonzero position, so this position does not appear as a row index of $B$. Therefore the blocks of $A$ induce a well-defined partition of the row indices of $B$.
We obtain the following block matrix:
\begin{center}
\includegraphics[scale=0.41]{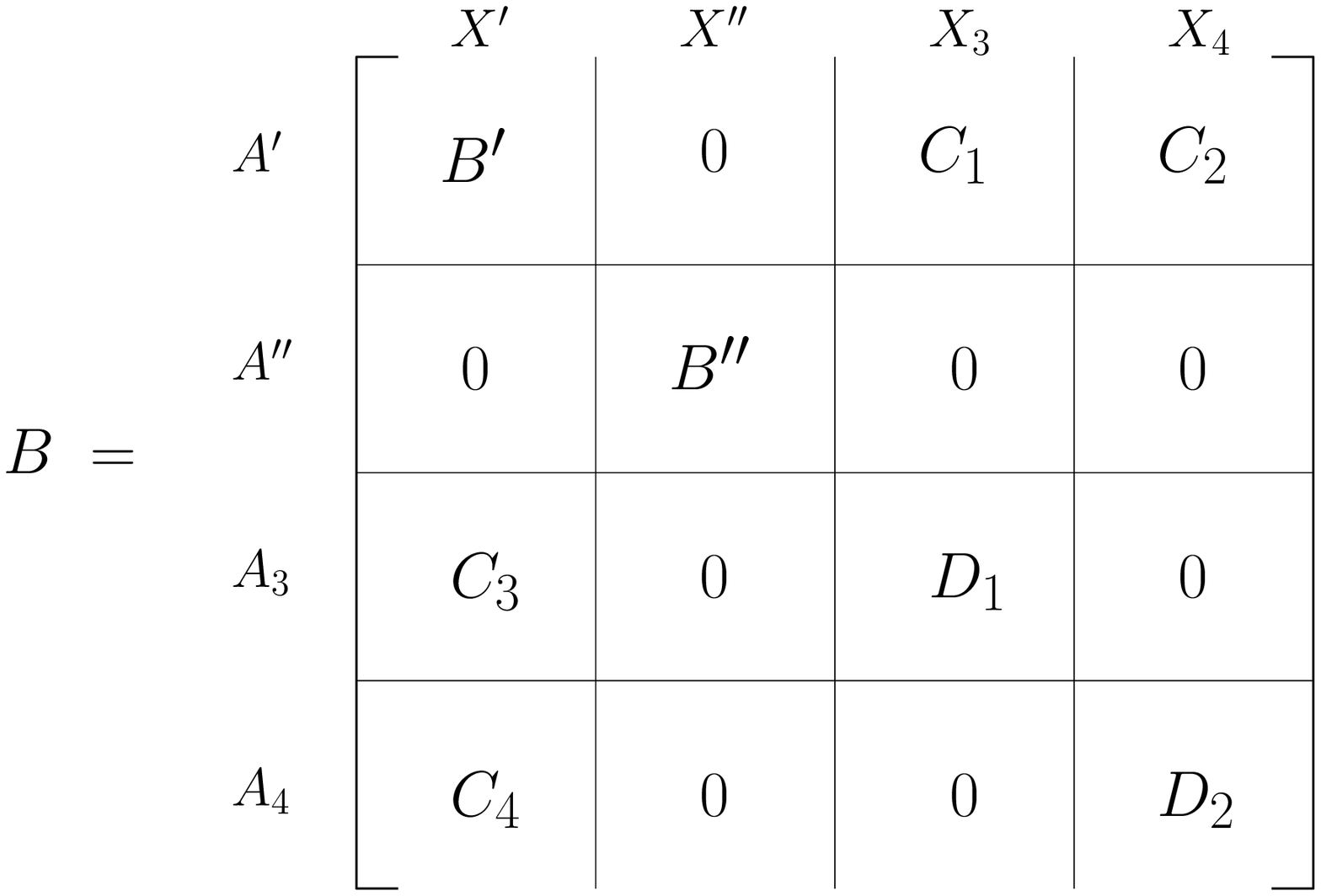}
\end{center}
The zero positions of the matrices $A',A''$ are exactly the row indices of $B',B''$, respectively. Furthermore, the positions of $X',X''$ which are outside the first row, first column and diagonal of $X$ yield exactly the column indices of $B',B''$, respectively.
Each edge of $G'$ also appears in $G$, so by definition of $B'$ \eqref{eq:def_B} the block indexed by $A'$ and $X'$ is indeed the matrix $B'$ corresponding to $G'$, and similarly, the block indexed by $A''$ and $X''$ is exactly the matrix $B''$ corresponding to $G''$. Now consider the block indexed by $A'$ and $X''$; if $(k,l)$ is a zero position of $A'$ and $(i,j)$ is a nonzero position of $X''$, then these two pairs only have a coordinate in common when $j=l=v$ or $i=k=v$. Then the corresponding entry of $B$ is of the form $a_{ki}$ or $-a_{jl}$, respectively, with $i,j\in V''\setminus\{v\}$ and $k,l\in V'$. However, there are no such edges $j\to l$ or $k\to i$ in $G$, because these would correspond to edges between $G'$ and $G''$ not incident to $v$. Hence the corresponding entry of $B$ is zero, and therefore the entire block indexed by $A',X''$ is zero. A similar analysis shows that each of the blocks of $B$ denoted with a zero indeed is a zero matrix.

Next, we analyze the blocks $C_1,C_2,C_3$ and $C_4$. For $C_1$ to have a nonzero entry, we need a position $(k,l)$ of $A'$ and a position $(i,j)$ of $X_3$ to have a coordinate in common. From the way that $A$ and $X$ have been partitioned, we see that the only option is $k=i$. This gives the entry $-a_{jl}$, where $j\in V''\setminus\{v\}$ and $l\in V$. The only parameters of this form are $a_{jv}$, with $v\to j$ an edge in $G''$. Therefore, the nonzero entries of $C_1$ are indexed by $(i,v),(i,j)$ such that $v\to j$ is an edge in $G$, and the corresponding entry is of the form $-a_{jv}$. Similarly, the nonzero entries of $C_2$ are indexed by $(v,j),(i,j)$ such that $i\to v$ is an edge in $G''$, and the corresponding entry is of the form $a_{vi}$. For the block $C_3$ the same analysis shows that all nonzero entries are of the form $-a_{vi}$, while the nonzero entries of $C_4$ are of the form $a_{iv}$, $i\in V''\setminus\{v\}$. The exact form of these blocks is not important for our further analysis, all we need is that the only nonzero entries are either $\pm a_{vi}$ or $\pm a_{iv}$ with $i\in V''\setminus\{v\}$.

Finally, consider the block $D_1$, which is indexed by $A_3,X_3$. The block $A_3$ has size $(|V'|-1)(|V''|-1)$ and consists entirely of zeros, so the number of rows of $D_1$ equals $(|V'|-1)(|V''|-1)$. The columns of $D_1$ are indexed by the block $X_3$ of size $|V'|(|V''|-1)$. Since the first row of $X$ must be zero, $X_3$ gives only $(|V'|-1)(|V''|-1)$ column indices. We conclude that $D_1$ is square, hence we can calculate its determinant to see whether it has full rank. 

Observe that $A_3$ yields pairs $(i,j)$ with $i\in V'\setminus\{v\}$ and $j\in V'' \setminus\{v\}$, while the pairs corresponding to $X_3$ are of the form $(i,j)$ with $i\in V'\setminus\{1\}$ and $j\in V''\setminus \{v\}$. 
Order both the row and column indexes by their second coordinate, then we obtain diagonal blocks of $D_1$ of the form $A'_{v,1,\alpha}$ for all $\alpha \in V''\setminus \{v\}$, with $A'_{v,1,\alpha}$ as defined in Lemma~\ref{lem:A_lFullRank}. It follows that
$\prod_{\alpha\in V''\setminus \{v\}} \det (A'_{v,1,\alpha})$
is a nonzero term appearing in the determinant of $D_1$. The nonzero entries of $D_1$ outside the blocks $A'_{v,1,\alpha}$ are parameters from $A''$, because the corresponding indices can only have their first coordinate in common. These parameters are therefore determined by the second coordinates of their indices, which are from $V''\setminus \{v\}$. Since the determinants of $A'_{v,1,\alpha}$ contain only parameters from $A'$, the product of those determinants cannot be canceled out when calculating the determinant of $D_1$.\\
A very similar argument (ordering rows and columns by their first coordinate) holds for the determinant of $D_2$, so we conclude that the determinants of $D_1$ and $D_2$ are generically nonzero.

Now suppose that both $B'$ and $B''$ have full rank. These matrices do not need to be square, since the number of rows may be larger than the number of columns. However, being full rank means that there exists a subset of the rows such that the corresponding matrix is square and invertible. Let $\fixwidehat{B'},\fixwidehat{B''}$ be such square submatrices with nonzero determinant, and let $\fixwidehat{B}$ be the corresponding square submatrix of $B$. Then from the structure of $B$, we see that the determinant $\det(\fixwidehat{B})$ contains a term
\[ \det(\fixwidehat{B'})\det(\fixwidehat{B''})\det(D_1)\det(D_2).\]
Moreover, the determinant of $\fixwidehat{B}$ contains a factor $\det(\fixwidehat{B''})$, because all other entries in the corresponding rows and columns are zero. The nonzero off-diagonal blocks only contain entries of the form $\pm a_{vj}$ and $\pm a_{jv}$ with $j\in V''\setminus \{v\}$, but these entries do not appear in $D_1,D_2$ or $B'$. Therefore the term above can never vanish, i.e. $\fixwidehat{B}$ has nonzero determinant.

The second part of the proposition follows directly from the fact that the determinant of $\fixwidehat{B}$ contains a factor $\det(\fixwidehat{B''})$: if $\det(\fixwidehat{B})$ is nonzero, then $\det(\fixwidehat{B''})$ must also be nonzero.
\end{proof}

Note that the proof does not rely on any assumption on the shared vertex $v$, so it may also be that $G'$ and $G''$ have their input-output compartment in common.

\begin{re}
Let $G,G',G''$ be as in Proposition~\ref{prop:GraphUnion}. We have just seen that if both $G',G''$ have the expected dimension, then so does $G$. Conversely, if $G'$ does not have the expected dimension, this does not necessarily imply that $G$ does not have the expected dimension. For example, the graph in Figure~\ref{fig:BadUnion} has the expected dimension, while its subgraph $G'$ does not. However, if $V'\cap V''=\{1\}$, then applying the proposition twice shows that $G$ has the expected dimension if and only if both $G'$ and $G''$ have the expected dimension.
\end{re}

\begin{figure}
\centering
\includegraphics[scale=0.7]{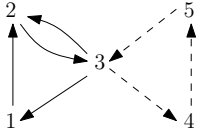}
\caption{$G'$ $\cup$ $G''$, where the edges of $G''$ are dashed.}
\label{fig:BadUnion}
\end{figure}

For our next result, recall Proposition \ref{prop:MSsimpleEar} of the previous section; it states that if $G'$ is a graph on $n-1$ vertices which has the expected dimension and we construct $G$ from $G'$ by adding a new vertex $n$ and two edges $k\to n$ and $n\to l$ such that $G$ has a chain of cycles containing both $1$ and $n$, then $G$ has the expected dimension as well. Using Theorem \ref{thm:GoodIffMfullrank} we present a stronger version of this theorem:

\begin{prop} \label{prop:AddLineToG}
	Let $G=(V,E)$ on $n-1$ vertices be a graph with the expected dimension. Construct $G'$ from $G$ by adding new vertices $n_1,\ldots,n_s$ and edges $k\to n_1$, $n_s\to l$ and $n_i\to n_{i+1}$ for $i=1,\ldots,s-1$, where $k,l\in V$ are vertices of $G$. Then $G'$ has the expected dimension.
\end{prop}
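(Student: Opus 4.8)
The plan is to invoke Theorem~\ref{thm:GoodIffMfullrank}: since $G$ has the expected dimension, it suffices to show that $B(G')$ has full column rank for sufficiently general $A'=A(G')$. I would order the vertices of $G'$ with those of $G$ first and the added vertices $n_1,\ldots,n_s$ last, so that $A'$ consists of the old block $A=A(G)$, a lower-bidiagonal $s\times s$ block carrying the chain parameters $a_{n_{t+1}n_t}$ and the new diagonal entries $a_{n_t n_t}$, and two ``boundary'' entries at positions $(n_1,k)$ and $(l,n_s)$. The rows of $B(G')$ (the zero positions of $A'$) and its columns (the pairs $(i,j)$ with $i,j\neq 1$, $i\neq j$) are then partitioned according to whether each coordinate is an old vertex or one of the $n_t$. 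The key first observation is that the \emph{old--old} block of $B(G')$ is exactly $B(G)$, because adding the line segment creates no edge between two vertices of $G$; by hypothesis this block has full column rank.

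For the remaining (``new'') columns I would exhibit, after reordering, a block-triangular structure whose diagonal blocks are of the type handled by Lemma~\ref{lem:A_lFullRank}. Concretely, fixing a new vertex $n_t$ and pairing the columns $(i,n_t)$ (with $i$ an old vertex) against the rows $(\kappa,n_t)$ gives a block with entries $a_{\kappa i}$ off the diagonal and $a_{\kappa\kappa}-a_{n_t n_t}$ on it; this is precisely a matrix of the form $A_{l,1,a_{n_t n_t}}$ in the notation of Lemma~\ref{lem:A_lFullRank}, hence invertible for generic $A'$ since $G$ is strongly connected. The columns $(n_t,j)$ produce symmetrically a transpose of such a matrix, again invertible. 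The couplings between consecutive indices $t$ and $t+1$ run through the chain parameters $a_{n_{t+1}n_t}$ and place these blocks in triangular position, so that the determinant of the new part contains the product $\prod_t \det\!\big(A_{\cdot,\cdot,a_{n_t n_t}}\big)$ as a nonzero term. Combining this with a maximal invertible square submatrix $\widehat{B(G)}$ of the old--old block, as in the proof of Proposition~\ref{prop:GraphUnion}, yields a square submatrix $\widehat{B}$ of $B(G')$ whose determinant has a distinguished term equal, up to sign, to $\det\!\big(\widehat{B(G)}\big)$ times the product of the new-block determinants.

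The main obstacle is the non-cancellation of this distinguished term. The cleanest route is to induct on $s$, inserting a single new vertex at each step, so that one only ever analyzes the case of one added vertex $n$ on the path from $k$ to $l$. In that case the ``new--new'' rows and columns are absent, the two new diagonal blocks are literally $A_{l,1,a_{nn}}$ and a transpose of $A_{1,k,a_{nn}}$, and the only off-diagonal entries of the submatrix are $\pm a_{nk}$ and $\pm a_{ln}$ — the two boundary parameters, which occur neither in $\widehat{B(G)}$ nor in the diagonal blocks. Setting $a_{nk}=a_{ln}=0$ therefore makes $\widehat{B}$ block diagonal with invertible diagonal blocks, so $\det\widehat{B}$ is a nonzero polynomial and $B(G')$ has full column rank generically. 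I expect the delicate point to be exactly what this induction sidesteps: for $s>1$ the boundary and chain parameters reappear inside the ``new--new'' block, so the specialization argument no longer block-diagonalizes the matrix in one stroke, and a direct all-at-once treatment would require carefully tracking which parameter occurs in which block to rule out cancellation.
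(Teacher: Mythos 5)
Your setup coincides with the paper's: the same reduction via Theorem~\ref{thm:GoodIffMfullrank}, the same observation that the old--old block of $B(G')$ is exactly $B(G)$, the same diagonal blocks controlled by Lemma~\ref{lem:A_lFullRank}, and the same distinguished term $\det(\widehat{B(G)})$ times the product of the new-block determinants. Your treatment of the case $s=1$ is correct. The gap is the induction on $s$ by which you propose to reduce everything to that case: the inductive step does not exist. Passing from $G$ plus the path $(k,n_1,\ldots,n_t,l)$ to $G$ plus the path $(k,n_1,\ldots,n_t,n_{t+1},l)$ is not an instance of the proposition being proved --- it requires \emph{deleting} the edge $n_t\to l$ and replacing it by $n_t\to n_{t+1}\to l$, i.e.\ subdividing an edge. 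The smaller graph is then not a subgraph of the larger one, $A(G_t)$ is not a submatrix of $A(G_{t+1})$, the old--old block is no longer $B(G_t)$, and the whole mechanism collapses; indeed, the assertion that subdivision preserves the expected dimension is precisely the paper's open Conjecture~\ref{conj:SubdividingEdges}. Nor can the $n_t$ be adjoined one at a time in some other order: every intermediate graph must be strongly connected, so each newly added vertex needs an outgoing edge into the existing graph, and any such temporary edge must later be removed.

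So the ``delicate point'' you flag at the end cannot be sidestepped; it is the heart of the proof, and the paper carries out exactly the all-at-once bookkeeping you describe as the fallback. Concretely: one specializes $a_{ln_s}=0$ and $a_{n_sn_{s-1}}=0$ (harmless, since neither parameter occurs in $\widehat{B(G)}$ nor in any $D_p$ or $D'_p$), after which the only off-diagonal entries left are the chain parameters $a_{n_1k},a_{n_2n_1},\ldots,a_{n_{s-1}n_{s-2}}$. These same parameters divide the determinants of the blocks $D'_p$ with controlled multiplicities ($a_{n_{s-1}n_{s-2}}$ only in $\det(D'_s)$, $a_{n_{s-2}n_{s-3}}$ in $\det(D'_{s-1})$ and $\det(D'_s)$, and so on down to $a_{n_1k}$). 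A descending argument --- the rows indexed by $(n_s,\cdot)$ vanish outside $D'_s$, forcing every term of the determinant to contain $\det(D'_s)$, which caps the multiplicity of $a_{n_{s-1}n_{s-2}}$, which in turn forces $\det(D'_{s-1})$, etc.\ --- shows no competing term can match the distinguished one. To complete your proof you would need to supply this multiplicity analysis (or an equivalent non-cancellation argument) in place of the induction.
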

\begin{proof}
	Let $A=A(G)$ and $A'=A(G')$ and observe that $A$ is a submatrix of $A'$, since $G$ is a subgraph of $G'$.
	Consider the coefficient matrices $B=B(G)$ and $B'=B(G')$ corresponding to $G$ and $G'$, respectively.
	We will use the fact that $B$ has full rank to show that also $B'$ has
	full rank. Let $\fixwidehat{B}$ be obtained from $B$ by deleting a
	subset of the rows, such that $\fixwidehat{B}$ has nonzero
	determinant. Let $\fixwidehat{B'}$ be obtained from $B'$ by deleting
	the same subset of the rows, and additionally deleting the rows
	indexed by $(1,n_p)$ for all $p\in [s]$. We rearrange the rows and
	columns of $\fixwidehat{B'}$ by distinguishing between indices of the
	form $(i,j)$ with $i,j\in V$ and indices of the form $(n_p,r)$ or
	$(r,n_p)$ with $p\in[s]$ and $r\in V \cup \{n_q \mid q<p\}$. We claim
	that this brings $\fixwidehat{B'}$ into the form given below (Figure~\ref{fig:blockformB}), where the empty blocks are all zero, and
	the blocks containing a parameter $\pm a_{ij}$ contain both zero entries and entries of the form $a_{ij}$.
	\begin{figure}
		\centering
		\includegraphics[scale=0.65]{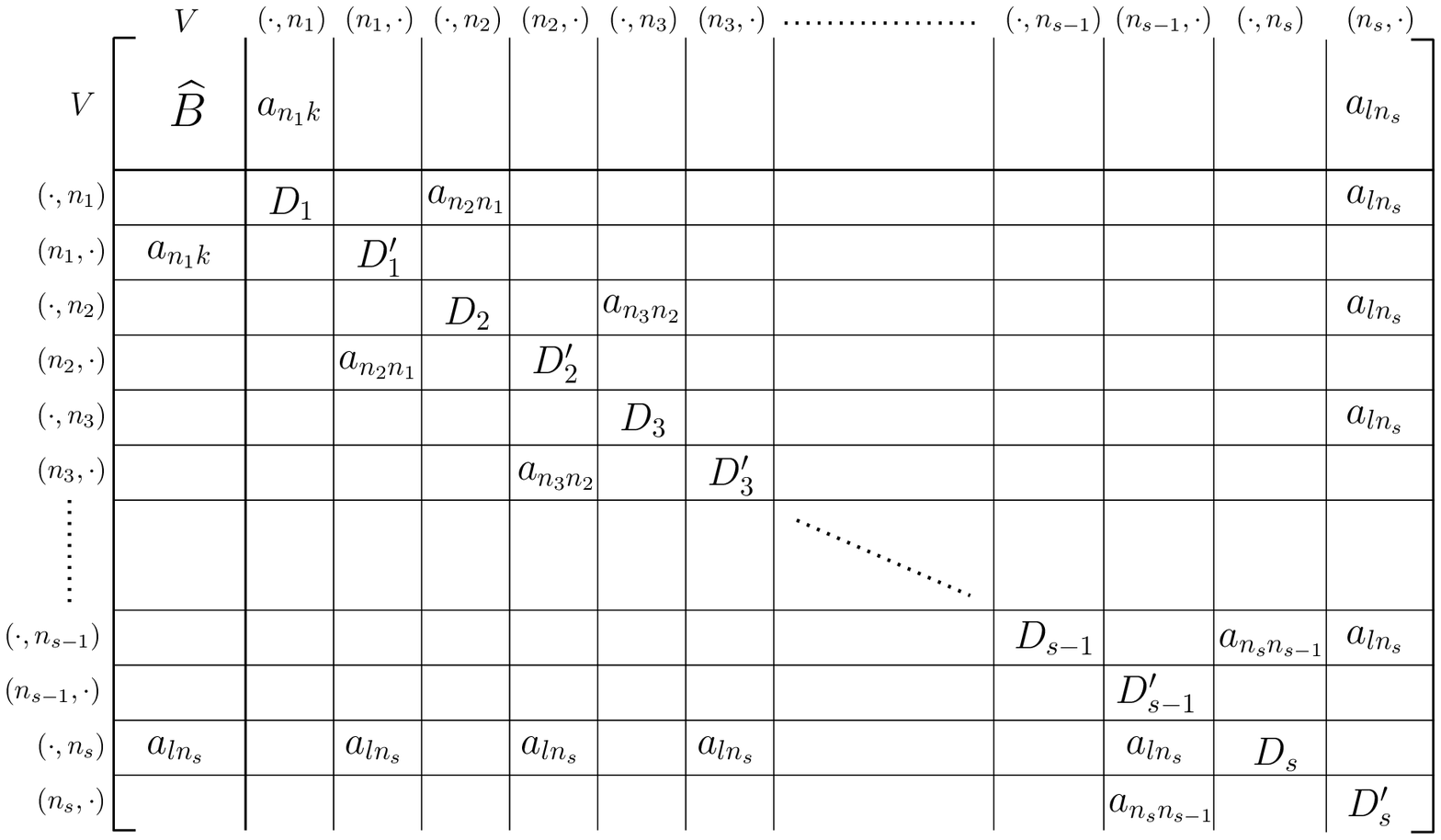}
		\caption{Block form of $\hat{B}'$} 
		\label{fig:blockformB}
	\end{figure}
	
	To show that $\fixwidehat{B'}$ is indeed of this form, we consider the different blocks one by one, starting with the upper left block. From the fact that $A$ is a submatrix of $A'$, it follows that the submatrix of $\fixwidehat{B'}$ corresponding to the rows and columns indexed by pairs of the form $(i,j)$ with $i,j\in V$ is exactly the matrix $\fixwidehat{B}$.
	
	The remaining positions in the rows indexed by $(i,j)$ with $i,j\in V$ have columns indexed by $(n_p,r)$ or $(r,n_p)$ with $p\in[s]$ and $r\in V \cup \{n_q \mid q<p\}$. By definition of $\fixwidehat{B'}$ \eqref{eq:def_B}, the only nonzero entries occur when $r=j$ or $r=i$, respectively, and this gives the entry $a_{n_p i}$ or $a_{jn_p}$. However, the only entries of $A'$ of this form are $a_{n_1 k}$ and $a_{ln_s}$. A similar analysis of the positions in the columns indexed by $(i,j)$ with $i,j\in V$ but outside $\fixwidehat{B}$, shows that the only nonzero entries are of the form $a_{n_1 k}$ and $a_{ln_s}$ as well.
	
	Next, consider the blocks $D_{p}$, where $p\in [s]$. The positions of $D_p$ are indexed by $(i,n_p),(j,n_p)$, hence they have at least their second coordinate in common. If $i\neq j$, the corresponding entry is of the form $a_{ij}$, and if $i=j$ we obtain $a_{ii}-a_{n_p n_p}$. Because of our ordering of rows and columns, we have $i,j\in V \cup \{n_q \mid q<p\}$, $i\neq 1$, and by definition of $B'=B(G')$ also $j\neq 1$. Therefore the block $D_p$ equals the submatrix of $A'$ obtained by deleting rows and columns indexed by 1 or $n_q$ with $q\geq p$. Furthermore, the diagonal entries $a_{ii}$ of $A'$ have been replaced by entries $a_{ii}-a_{n_pn_p}$. Thus, the block $D_p$ is a block lower diagonal square matrix of the form
	\begin{center}
		\includegraphics[scale=0.6]{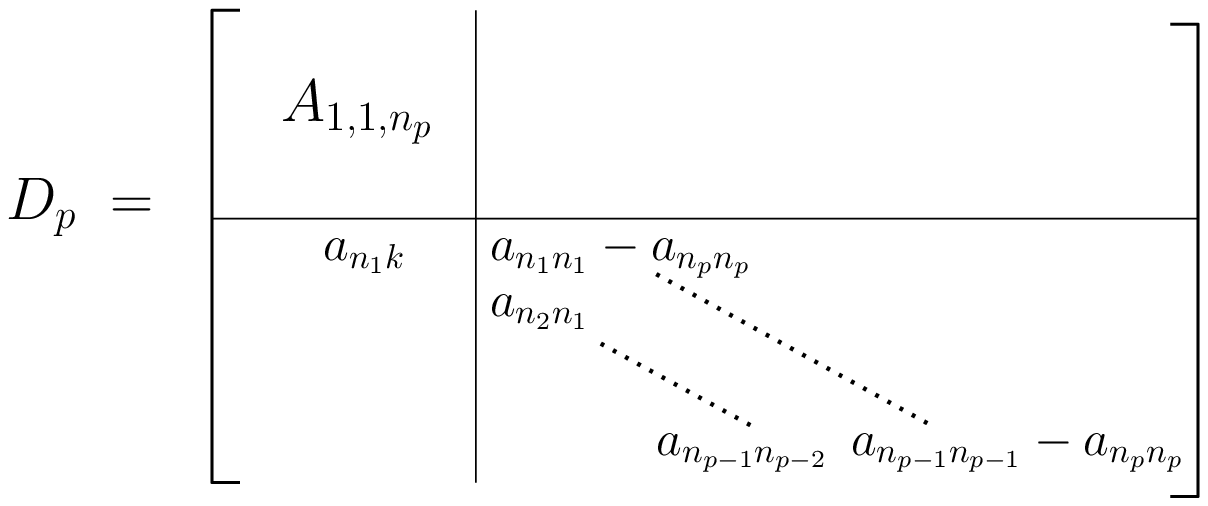}
	\end{center}
	with determinant
	\[ \det(D_p) =
	\begin{cases}
	\det(A_{1,1,n_p}) & \mbox{if } p=1 \\
	\det(A_{1,1,n_p}) \prod_{j=1}^{p-1} (a_{n_j n_j}-a_{n_p n_p}) & \mbox{if } p\geq 2.
	\end{cases}
	\]
	
	Analogously, the row and column indices of blocks $D'_p$ all have their first coordinate in common. The positions are indexed by $(n_p,i),(n_p,j)$, which implies that the entries are of the form $-a_{ji}$ or $-(a_{ii}-a_{n_p n_p})$. Note that we obtain $-a_{ji}$ instead of $a_{ij}$, so if we apply a similar analysis as we did for $D_p$, we obtain some submatrix of $-(A')^T$. Furthermore, there is no row index of the form $(n_p,n_{p-1})$, since the edge $n_{p-1}\to n_p$ occurs in $G'$. Hence $-(D'_p)^T$ equals the submatrix of $A'$ obtained by deleting rows and columns indexed by $n_q$ with $q\geq p$, and also deleting column $n_{p-1}$ and row 1. Again, the diagonal entries $a_{ii}$ of $A'$ have been replaced by entries $a_{ii}-a_{n_p n_p}$. If we separate column $k$ from the rest of the columns indexed by $(n_p,i)$ with $i\in V$, we see that the block $D'_p$ is a block upper diagonal square matrix of the form
	\begin{center}
		\includegraphics[scale=0.6]{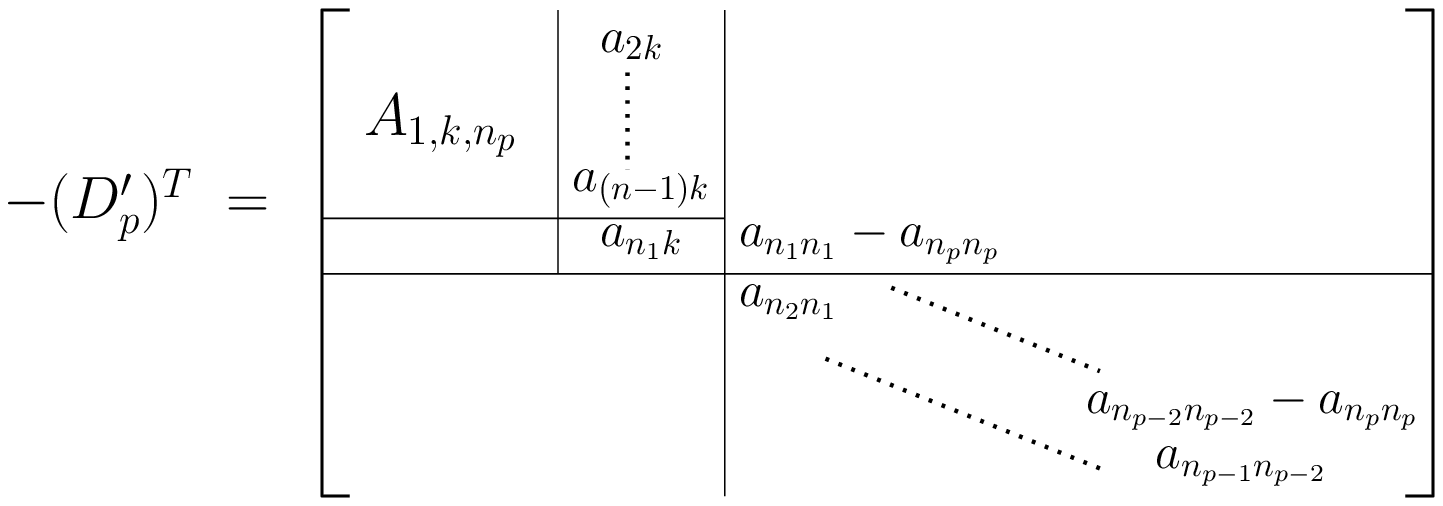}
	\end{center}
	and its determinant satisfies
	\[ \pm \det(D'_p) =
	\begin{cases}
	\det(A_{1,k,n_1}) & \mbox{if } p=1 \\
	\det(A_{1,k,n_2})a_{n_1k} & \mbox{if } p=2 \\
	\det(A_{1,k,n_p}) a_{n_1k} \prod_{j=1}^{p-2}(a_{n_{j+1}n_j}) & \mbox{if } p\geq 3.
	\end{cases}
	\]
	
	Finally, consider the entries which do not lie in any of the blocks $D_p$ or $D'_p$ or in the rows and columns denoted by $V$ in $\fixwidehat{B'}$. These rows are indexed by $(n_p,\cdot)$ or $(\cdot,n_p)$, while the columns are indexed by $(n_q,\cdot)$ or $(\cdot,n_q)$ with $q\neq p$. If two such pairs have an entry in common, then the two remaining entries determine the value of the corresponding entry of $\fixwidehat{B'}$. Equation~\eqref{eq:parameteroptions} gives an overview of all possible pairs of indices having a coordinate in common, the corresponding entry of $\fixwidehat{B'}$ and the conditions for this entry to be nonzero.
	\begin{equation} \label{eq:parameteroptions}
		\begin{array}{rll}
			(n_p,i),(n_q,i) &\leadsto \quad a_{n_pn_q} & \quad \neq 0 \mbox{ iff } p=q+1 \\
			(i,n_p),(i,n_q) &\leadsto \quad a_{n_qn_p} & \quad \neq 0 \mbox{ iff } p=q-1 \\
			(n_p,n_q),(i,n_q) &\leadsto \quad a_{n_pi} \quad \mbox{ if } p>q & \quad =0\\
			(n_q,n_p),(n_q,i) &\leadsto \quad a_{in_p} \quad \mbox{ if } p>q & \quad \neq 0 \mbox{ iff } p=s,i=l\\
			(n_p,i),(n_p,n_q) &\leadsto \quad a_{n_qi} \quad \mbox{ if } p<q & \quad =0\\
			(i,n_p),(n_q,n_p) &\leadsto \quad a_{in_q} \quad \mbox{ if } p<q & \quad \neq 0 \mbox{ iff } q=s,i=l
		\end{array}
	\end{equation}
	
	We conclude that $\fixwidehat{B'}$ is indeed as claimed. Since the diagonal blocks of $\fixwidehat{B'}$ have full rank, its determinant contains a term
	\begin{equation*} 
		m_1 = \det(\fixwidehat{B}) \prod_{p=1}^s \det(D_p) \det(D'_p),
	\end{equation*}
	and we will show that this term cannot be cancelled out.
	
	Recall that it is enough to show that $\det(\fixwidehat{B'})$ is nonzero for $A$ in some Zariski open subset of $\Theta$, since we only consider sufficiently general $A$. Therefore, we can simplify things by setting $a_{ln_s}=0$ and $a_{n_s n_{s-1}}=0$. Note that these parameters do not appear in $m_1$, because none of the $D_p,D'_p$ contain rows or columns of $A'$ that are indexed by $n_s$, hence setting these parameters to zero does not affect $m_1$.
	
	Suppose $m_2$ is a term of the determinant of $\fixwidehat{B'}$ which cancels out $m_1$, then it must have at least one entry from outside the diagonal blocks of $\fixwidehat{B'}$. From our previous observations, we know that these entries are of the form
	\begin{equation*} 
		a_{n_1 k},a_{n_2n_1},\ldots,a_{n_{s-1}n_{s-2}},
	\end{equation*}
	since $a_{n_sn_{s-1}}=a_{jn_s}=0$.
	As we have seen, these parameters also appear in the determinants of $D'_p$. More specific, $a_{n_{s-1}n_{s-2}}$ occurs only in $\det(D'_s)$, while $a_{n_{s-2}n_{s-3}}$ occurs in $\det(D'_s)$ and $\det(D'_{s-1})$, continuing up to $a_{n_1k}$ which divides all of $\det(D'_1),\ldots,\det(D'_s)$.
	Using this observation, we will argue that $m_2$ can never yield the term $m_1$.
	
	Consider the block $D'_s$. From the fact that we set $a_{n_s n_{s-1}}$ to zero, it follows that the rows of $\fixwidehat{B'}$ indexed by $(n_s,\cdot)$ are all zero outside $D'_s$. Therefore, any term in the determinant of $\fixwidehat{B'}$ contains $\det(D'_s)$. The parameter $a_{n_{s-1}n_{s-2}}$ appears exactly once in $m_1$, namely in $\det(D'_s)$, so $m_2$ cannot contain another factor $a_{n_{s-1}n_{s-2}}$.
	
	Now consider the block $D'_{s-1}$ and observe that the only nonzero entries in the rows of $\fixwidehat{B'}$ indexed by $(n_{s-1},\cdot)$ are of the form $a_{n_{s-1}n_{s-2}}$. From the observation that $m_2$ cannot contain these entries, it follows that $m_2$ contains a factor $\det(D'_{s-1})$. The parameter $a_{n_{s-2}n_{s-3}}$ appears exactly twice in $m_1$, namely in $\det(D'_s)$ and in $\det(D'_{s-1})$, hence $m_2$ cannot contain a third factor $a_{n_{s-2}n_{s-3}}$. One can repeat this argument, showing step by step dat $m_2$ cannot contain any entries which are outside the diagonal blocks. However, this implies that $m_1$ cannot be cancelled out by $m_2$ in the determinant of $\fixwidehat{B'}$. This shows that $\det(\fixwidehat{B'})$ is nonzero for sufficiently general $A'$, thus proving that $G'$ has the expected dimension.
\end{proof}

The proof of Proposition~\ref{prop:AddLineToG} does not need any restrictions on $k,l\in V$, so we can add a cycle by choosing $k=l$.

The converse of Proposition \ref{prop:AddLineToG} does not hold; if $G$ does not have the expected dimension, then $G'$ might still have the expected dimension. For example, consider the graph $G'$ in Figure~\ref{fig:counterline} which is obtained from the graph $G$ by adding vertex 5 and edges $3\to 5$ and $5\to 2$. The graphs $G,G'$ have parameter matrices $A,A'$, respectively:
\[
A = \begin{bmatrix}
a_{11} & a_{12} & 0 & a_{14} \\
a_{21} & a_{22} & 0 & 0 \\
0 & a_{32} & a_{33} & a_{34} \\
0 & 0 & a_{43} & a_{44}
\end{bmatrix}
\qquad
A'= \begin{bmatrix}
a_{11} & a_{12} & 0 & a_{14} & 0 \\
a_{21} & a_{22} & 0 & 0 & a_{25} \\
0 & a_{32} & a_{33} & a_{34} & 0 \\
0 & 0 & a_{43} & a_{44} & 0 \\
0 & 0 & a_{53} & 0 & a_{55}
\end{bmatrix}
\]
\begin{figure}
	\includegraphics[scale=0.55]{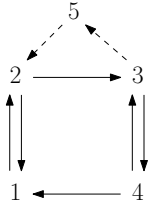}
	\caption{Graph $G'$; obtained from $G$ by adding a line segment (dashed).}
	\label{fig:counterline}
\end{figure}
From the structure of $A$ we see that $G$ does not have the expected dimension, since the pair $(3,4)$ satisfies Condition \ref{cond:support}. On the other hand, one can check that $G'$ \emph{does} have the expected dimension using Theorem~\ref{thm:GoodIffMfullrank}.\\

We conclude this subsection with a conjecture. 

\begin{conj} \label{conj:SubdividingEdges}
Let $G$ be a graph on $n-1$ vertices and let $k\to l$ be an edge in $G$. Construct the graph $G'$ on $n$ vertices subdividing the edge $k\to l$, i.e. by adding vertex $n$ to $G$ and replacing the edge $k\to l$ by two edges $k\to n,n\to l$. Then if $G$ has the expected dimension, $G'$ has the expected dimension as well.
\end{conj}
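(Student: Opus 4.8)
The plan is to prove the conjecture through the rank criterion of Theorem~\ref{thm:GoodIffMfullrank}, by the same determinantal block method used for Proposition~\ref{prop:GraphUnion} and Proposition~\ref{prop:AddLineToG}: assuming $B(G)$ has full column rank, I would exhibit a square submatrix of $B(G')$ whose determinant is generically nonzero. Subdivision is very close to the $s=1$ case of Proposition~\ref{prop:AddLineToG}, the one decisive difference being that subdividing $k\to l$ also deletes the edge $k\to l$. Writing $A=A(G)$ and $A'=A(G')$, the matrix $A'$ arises from $A$ by adjoining a row and column indexed by the new vertex $n$, carrying the single in-entry $a_{nk}$ (from $k\to n$) at position $(n,k)$ and out-entry $a_{ln}$ (from $n\to l$) at position $(l,n)$, and by resetting the position $(l,k)$, previously $a_{lk}$, to zero.

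Relative to $B(G)$, the new columns of $B(G')$ are those indexed by $(n,r)$ and $(r,n)$ with $r\in\{2,\ldots,n-1\}$, and there are exactly $2(n-2)$ new admissible rows coming from the zero positions $(n,r)$ with $r\neq k$ and $(r,n)$ with $r\neq l$ of $A'$. Starting from a nonsingular square submatrix $\widehat{B}$ of $B(G)$, I would build $\widehat{B'}$ by keeping the rows and columns of $\widehat{B}$, adjoining all new columns, and adjoining all the new $n$-rows. Ordering so that the old pairs $(i,j)$ with $i,j\in V$ precede the pairs involving $n$, this brings $\widehat{B'}$ into the block form of Figure~\ref{fig:blockformB} with $s=1$: the diagonal blocks $D$ (columns $(r,n)$) and $D'$ (columns $(n,r)$) equal, as in Proposition~\ref{prop:AddLineToG}, submatrices of $A'$ and of $-(A')^{T}$ with shifted diagonal, so by Lemma~\ref{lem:A_lFullRank} their determinants $\det(A'_{1,1,n})$ and $\det(A'_{1,k,n})$ are generically nonzero. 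Moreover the blocks coupling the old rows and columns to the $n$-blocks carry only the entries $a_{nk}$ and $a_{ln}$.

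The genuinely new phenomenon, absent from Proposition~\ref{prop:AddLineToG}, is that the upper-left block is no longer exactly $\widehat{B}$. The parameter $a_{lk}$ occurs in $B(G)$ in two stripes, the entries $B(G)_{(p,k),(p,l)}=-a_{lk}$ and $B(G)_{(l,q),(k,q)}=a_{lk}$, and every one of these vanishes in $B(G')$; thus the upper-left block is $\widehat{B}$ with its $a_{lk}$-stripes deleted, which need not be invertible on its own. The mechanism I would try to exploit is that the path $k\to n\to l$ should take over the bookkeeping role of the deleted edge: the off-diagonal coupling through $a_{nk}$ and $a_{ln}$ is exactly what can reinstate the contribution previously provided by $a_{lk}$. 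Concretely, the goal is to show that the Leibniz expansion of $\det(\widehat{B'})$ contains a surviving leading term of the shape
\[
\pm\, a_{nk}\, a_{ln}\, \det(\widehat{B}_{0})\, \det(D)\, \det(D'),
\]
where $\widehat{B}_{0}$ is a suitable cofactor of $\widehat{B}$ that avoids the zeroed $a_{lk}$-stripes, so that the product $a_{nk}a_{ln}$ literally stands in for $a_{lk}$. As in the earlier proofs, one would then set $a_{ln}$ and any parameter not appearing in this term to zero and use that each parameter sits in only one column of $B(G')$ and with controlled multiplicity, to conclude that no other term can cancel the leading one.

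The hard part—and the reason this remains a conjecture—is precisely the edge deletion: unlike Proposition~\ref{prop:AddLineToG}, where $A$ is literally a submatrix of $A'$ and the old block is exactly $\widehat{B}$, here one must show that full column rank of $B(G)$ can be witnessed by a minor $\widehat{B}_{0}$ that does \emph{not} use the $a_{lk}$-stripes, and that rerouting through $n$ compensates for them without introducing cancellation. Guaranteeing such a minor for the \emph{same} matrix $B(G)$ is a nontrivial transfer statement rather than a routine cofactor computation, and it is where the argument could fail. An alternative would be to work with the kernel description of Corollary~\ref{cor:GoodIffX}: assuming $G'$ is not of the expected dimension, take a nonzero $X'\in\gl_n$ with zero first row, column and diagonal and $[X',A']\in\Theta_{G'}$, use the degree-one in/out structure of $n$ to pin down the entries $X'_{nr}$ and $X'_{rn}$, and then project to an obstruction $X\in\gl_{n-1}$ for $G$. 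This route is attractive but faces the symmetric difficulty that the extra constraint imposed at the now-vanishing position $(l,k)$ may not survive the projection, so controlling that interaction is again the crux.
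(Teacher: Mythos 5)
The statement you are addressing is not proved in the paper at all: it is stated as Conjecture~\ref{conj:SubdividingEdges} and left open. The authors report only a computational verification (all graphs on four and five vertices, plus larger random graphs) and explicitly name the obstruction: the techniques of Propositions~\ref{prop:GraphUnion} and~\ref{prop:AddLineToG} ``cannot be applied here so easily, because the matrix $A(G)$ is not a submatrix of $A(G')$.'' Your proposal identifies exactly the same obstruction --- subdividing $k\to l$ deletes that edge, so the two $a_{lk}$-stripes of $B(G)$ vanish in $B(G')$ and the upper-left block of your candidate $\widehat{B'}$ is no longer the invertible minor $\widehat{B}$ --- and you are candid that you do not overcome it. That candor is to your credit, but it means the proposal is a plan rather than a proof, and it does not go beyond what the paper itself already observes.

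Concretely, the gap is the ``transfer statement'' you name yourself: one must show that full column rank of $B(G)$ can always be witnessed by a nonvanishing maximal minor whose Leibniz expansion contains a surviving term avoiding both $a_{lk}$-stripes, and that the coupling entries $a_{nk}$ and $a_{ln}$ then restore the lost contributions without cancellation. Nothing in the paper supplies this: Lemma~\ref{lem:A_lFullRank} and the cancellation bookkeeping in Proposition~\ref{prop:AddLineToG} both rest on every parameter of $G$ persisting in $G'$ and occurring in a controlled set of blocks, whereas here the parameter $a_{lk}$ genuinely disappears from $\Theta_{G'}$ rather than being specialized, so one cannot argue by degeneration either. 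Your alternative route via Corollary~\ref{cor:GoodIffX} faces the symmetric difficulty you point out: the position $(l,k)$ is a constraint row of $B(G')$ but not of $B(G)$, and the cross terms through row and column $n$ prevent a direct projection of an obstruction $X'$ for $G'$ to one for $G$. Until one of these steps is actually carried out, the statement remains a conjecture, consistent with the paper's own assessment.
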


This conjecture has been verified for all graphs $G$ on four and five
vertices using \textsc{Mathematica}, as well as for larger random
graphs. Unfortunately, the techniques we used to prove the previous
propositions cannot be applied here so easily, because the matrix $A(G)$
is not a submatrix of $A(G')$. 

\subsection{Ear decompositions} \label{sec:EarDecompositions}
This section describes how to construct graphs with the expected dimension using Propostion~\ref{prop:AddLineToG}.
Starting from a cycle, which has the expected dimension, we can add line segments to obtain new graphs, for example all minimally strongly connected graphs. This gives rise to a procedure to obtain a graph which has the expected dimension from a graph which does not have the expected dimension. An important concept that we shall be using is the ear decomposition of a directed graph, as defined in \cite{Jensen}:

\begin{de}
Given a directed graph $G=(V,E)$, let $\calE=\{P_0,P_1,\ldots,P_t\}$ be a sequence of cycles and paths in $G$, $t\geq 0$, and define $G_i = (V_i,E_i) := P_0 \cup P_1 \cup \ldots \cup P_i$.
Then $\calE$ is an \emph{ear decomposition} of $G$ if $P_0$ is a
cycle, $G_t=G$, and each $P_i$ is a path $(v_0,v_1,\ldots,v_k)$, $k\geq 1$, satisfying \begin{enumerate}
	\item $v_0,v_k\in V_{i-1}$ (not necessarily distinct),
	\item $\forall\, 0 < i < j < k: \quad v_i\in V \setminus V_{i-1} \textnormal{ and } v_i\neq v_j$,
	\item $\forall\, 0 \leq i < j \leq k: \quad v_i\to v_j \in E \setminus E_{i-1}$.
\end{enumerate}
The $P_i$ are called the \emph{ears} of $\calE$, and if $k=1$ the ear $P_i = (v_0, v_1)$ is called a \emph{trivial ear}.
\end{de}

Note that the graphs $G_0,\ldots,G_t$ are strongly connected, hence if $G$ has an ear decomposition then it must be strongly connected. The converse also holds: if a graph is strongly connected, then it must have an ear decomposition \cite[Thm.~5.3.2]{Jensen}. This can be seen from the fact that in a strongly connected graph every node lies on a cycle.

A graph may have many different ear decompositions, as shown in Figure~\ref{fig:differentEDs}. Each of these decompositions has the same number of ears, namely $m-n+1$ \cite[Cor.~5.3.3]{Jensen}.

\begin{figure}
\centering
\includegraphics[height=2.5cm]{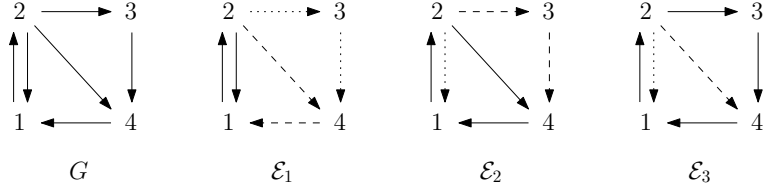}
\caption{A graph $G$ with three different ear decompositions $\calE_1,\calE_2,\calE_3$.}
\label{fig:differentEDs}
\end{figure}

Our purpose is to construct graphs with the expected dimension, using
the results of the previous section. Therefore, we define a specific
kind of ear decomposition: 
\begin{de}
We say that a graph $G$ has a \emph{nontrivial ear decomposition} if it has an ear decomposition without trivial ears, and such that the initial cycle $P_0$ contains vertex 1.
\end{de}
Consider the graph $G$ given in Figure \ref{fig:differentEDs}, and the three ear decompositions $\calE_1,\calE_2,\calE_3$. In each $\calE_i$, let $P_0,P_1$ and $P_2$ be the normal, dashed and dotted ears, respectively. Then the initial cycle $P_0$ contains vertex 1 in each of the three decompositions. However, $\calE_1$ is the only ear decomposition without trivial ears. In other words, $\calE_1$ is a nontrivial ear decomposition of $G$, but $\calE_2$ and $\calE_3$ are not.

\begin{thm:CEDimpliesGood}
Let $G$ be a graph that has a nontrivial ear decomposition, then $G$ has the expected dimension.
\end{thm:CEDimpliesGood}
\begin{proof}
Let $G$ have a nontrivial ear decomposition $\calE=\{P_0,\ldots,P_t\}$.
A nontrivial ear decomposition consists of nontrivial ears, and a nontrivial ear corresponds to a line segment (see Section~\ref{sec:Definitions}) of length at least two. Since a cycle is known to have the expected dimension, it follows that $G_0$ has the expected dimension, and we can apply Proposition~\ref{prop:AddLineToG} $t$ times to conclude that $G_1,G_2,\ldots,G_t$ all have the expected dimension. In other words, if a graph has a nontrivial ear decomposition, then it certainly has the expected dimension.
\end{proof}

Conversely, if a graph has the expected dimension, it does not need to have a nontrivial ear decomposition. For example, the graph in Figure~\ref{fig:BadUnion} has no nontrivial ear decomposition, yet it does have the expected dimension.

\begin{prop}
A graph $G$ is minimally strongly connected if and only if all its ear decompositions have no trivial ears.
\end{prop}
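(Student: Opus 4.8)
The plan is to prove both directions by relating trivial ears to chords, i.e. edges both of whose endpoints already lie in the previously constructed subgraph. Recall that an ear decomposition $\calE=\{P_0,\ldots,P_t\}$ builds $G$ by successively adding ears, and a trivial ear is simply a single edge $v_0\to v_1$ with both $v_0,v_1\in V_{i-1}$. The key observation is that a trivial ear adds an edge without adding any new vertex, whereas a nontrivial ear of length $k\geq 2$ adds $k-1$ new vertices together with $k$ edges. I would first establish the forward direction (minimally strongly connected implies every ear decomposition is trivial-free) by contraposition, and then the converse by a counting/removability argument.

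For the forward direction, suppose $G$ is minimally strongly connected but has an ear decomposition $\calE$ containing a trivial ear $P_i=(v_0,v_1)$. I would argue that the edge $e=v_0\to v_1$ can be deleted without destroying strong connectivity. The idea is that $G_{i-1}$ is already strongly connected (as noted in the excerpt, every $G_j$ is strongly connected), so inside $G_{i-1}$ there is already a directed path from $v_0$ to $v_1$ not using $e$; moreover every later ear $P_j$ ($j>i$) is attached to $V_{j-1}$ and does not rely on $e$ for its own internal connectivity, so deleting $e$ from $G$ leaves a graph that still contains $G_{i-1}\setminus\{e\}$ (strongly connected) together with all the later ears attached to it, hence is still strongly connected. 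This contradicts minimality, so no trivial ear can occur.

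For the converse, suppose $G$ has an ear decomposition with no trivial ears; I want to show $G$ is minimally strongly connected, i.e. that deleting any edge $e\in E$ destroys strong connectivity. The cleanest route is a counting argument using the stated fact that every ear decomposition of $G$ has exactly $m-n+1$ ears \cite[Cor.~5.3.3]{Jensen}. Since a trivial-free decomposition has each ear contributing at least one new vertex, and the initial cycle $P_0$ accounts for its vertices, I can relate $m$ and $n$: with $t=m-n+1$ ears, the initial cycle $P_0$ on some $n_0$ vertices has $n_0$ edges, and each nontrivial ear $P_i$ of internal length contributes exactly one more edge than new vertices. Tallying this up shows $m=n$ precisely when... more carefully, a minimally strongly connected graph on $n$ vertices has exactly $n$ edges (it is a single cycle structure only in the simplest case, but in general the minimal count is a standard fact), and I would show the trivial-free condition forces this minimal edge count, from which minimal strong connectivity follows because removing any edge drops the edge count below the threshold required for strong connectivity on $n$ vertices. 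Concretely, I would show that in a trivial-free decomposition each edge of $G$ is \emph{critical}: it is either the unique non-tree edge closing some ear or an internal edge of a line segment whose removal disconnects an internal vertex.

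The main obstacle I anticipate is the converse direction: establishing that \emph{every} edge is removable-destroying, given only the \emph{existence} of one trivial-free decomposition rather than a property of all of them. The subtlety is that minimal strong connectivity is a global statement quantified over all edges, while the decomposition only directly exposes the ear-structure of a particular construction order. I expect the resolution to go through the observation that in a trivial-free ear decomposition every internal vertex of an ear has in-degree and out-degree exactly one within the ear and no other incident edges (by the definition's condition that these are the only edges incident to interior vertices of a line segment), so those edges are manifestly critical; the remaining edges are the endpoints' closing edges and the initial cycle's edges, and for these I would invoke the fact that the equal count $m-n+1$ of ears combined with strong connectivity pins down that no edge is redundant. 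I would double-check the edge-count bookkeeping, since an off-by-one there is the likeliest place for the argument to slip.
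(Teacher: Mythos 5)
Your forward direction is essentially the paper's: if some ear decomposition contains a trivial ear $(v_0,v_1)$, then the edge $v_0\to v_1$ can be deleted and what remains still carries an ear decomposition, hence is strongly connected, so $G$ is not minimally strongly connected. That half is correct.

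The converse is where the proposal fails, and the problem is the quantifier rather than the bookkeeping. The statement to be proved is: if \emph{all} ear decompositions of $G$ are free of trivial ears, then $G$ is minimally strongly connected --- equivalently, by contraposition, if $G$ is not minimally strongly connected then \emph{some} ear decomposition of $G$ has a trivial ear. You instead set out to prove that the \emph{existence} of a single trivial-free decomposition forces minimal strong connectivity. That stronger claim is false: the graph of Figure~\ref{fig:differentEDs} has a trivial-free ear decomposition $\calE_1$ and yet is not minimally strongly connected (its other decompositions $\calE_2,\calE_3$ do contain trivial ears). So the difficulty you flagged in this direction is not a technicality to be patched; the statement you are attacking is simply not true. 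The intended argument is short: if $G$ is not minimally strongly connected, choose a removable edge $e$; then $G-e$ is strongly connected and therefore has an ear decomposition, and appending $e$ to it as a trivial ear produces an ear decomposition of $G$ containing a trivial ear.

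Separately, the auxiliary claim that a minimally strongly connected graph on $n$ vertices has exactly $n$ edges is false: two directed cycles sharing a single vertex form a minimally strongly connected graph with $n+1$ edges, and in general such graphs may have up to $2n-2$ edges. Hence the threshold argument built on that count would not stand even for the (incorrect) version of the converse you were pursuing.
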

\begin{proof}
Suppose that $G$ has an ear decomposition with a trivial ear. Then the graph obtained from $G$ by deleting the edge of this trivial ear is strongly connected, because it has an ear decomposition. Hence $G$ is not minimally strongly connected.

Conversely, suppose that $G$ is not minimally strongly connected, then it has an edge $e$ that can be removed, such that the resulting graph $G'$ remains strongly connected. Then $G'$ has an ear decomposition, and adding the trivial ear that contains the edge $e$ results in an ear decomposition of $G$ that has a trivial ear.
\end{proof}
Because of the above proposition and the fact that a strongly connected graph has at least one ear decomposition, we can find for any minimally strongly connected graph a nontrivial ear decomposition. Combining this with Theorem~\ref{thm:CEDimpliesGood} leads us to the following corollary.
\begin{cor} \label{cor:MSCimpliesGood}
If $G$ is minimally strongly connected, then it has the expected dimension.
\end{cor}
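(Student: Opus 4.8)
The plan is to build, for any minimally strongly connected $G$, a \emph{nontrivial} ear decomposition and then invoke Theorem~\ref{thm:CEDimpliesGood}. First I would exploit strong connectivity: in a strongly connected digraph every vertex lies on a directed cycle, so in particular vertex $1$ lies on some cycle $C$. I would take $P_0 = C$ as the initial cycle and extend it to a full ear decomposition $\calE = \{P_0, P_1, \ldots, P_t\}$ of $G$. The existence of such an extension is the content of the ear-decomposition theorem for strongly connected digraphs (\cite[Thm.~5.3.2]{Jensen}); its proof attaches ears one at a time starting from an arbitrary cycle, so one is free to seed the construction with the prescribed cycle $C$ through vertex $1$.

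Next I would invoke the proposition immediately preceding this corollary: since $G$ is minimally strongly connected, \emph{every} ear decomposition of $G$ is free of trivial ears, and in particular so is $\calE$. Combined with the fact that its initial cycle $P_0$ contains vertex $1$, this shows that $\calE$ satisfies both requirements in the definition of a nontrivial ear decomposition. Theorem~\ref{thm:CEDimpliesGood} then applies directly to $\calE$ and yields that $G$ has the expected dimension, completing the argument.

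The only delicate point is the very first step, namely guaranteeing that the ear decomposition can be arranged so that its initial cycle passes through vertex $1$. The existence theorem as usually stated merely produces \emph{some} ear decomposition, whereas the definition of a nontrivial ear decomposition pins down the location of $P_0$. The hard part, if any, is therefore to confirm that this freedom in the starting cycle is available; this follows because strong connectivity forces vertex $1$ onto a cycle and the standard inductive attachment of the remaining ears never alters the chosen $P_0$. Once this is verified, the rest of the proof is a one-line combination of the preceding proposition with Theorem~\ref{thm:CEDimpliesGood}.
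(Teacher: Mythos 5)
Your proposal is correct and follows essentially the same route as the paper: combine the preceding proposition (minimal strong connectivity forces every ear decomposition to be free of trivial ears) with the existence of an ear decomposition whose initial cycle passes through vertex $1$, then apply Theorem~\ref{thm:CEDimpliesGood}. You are in fact slightly more explicit than the paper about the one delicate point---that the starting cycle can be chosen through vertex $1$---which the paper only justifies in passing by noting that every vertex of a strongly connected graph lies on a cycle.
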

The converse does not hold, because for $G$ to have the expected dimension it is enough to have only one nontrivial ear decomposition. For example, the graph $G$ in Figure~\ref{fig:differentEDs} has a nontrivial ear decomposition, hence the expected dimension, but it is not minimally strongly connected.

\begin{prop} \label{prop:PartialConj}
Let $G$ be a graph that contains a 2-cycle $\{i\to j,j\to i\}$, and let $G'$ be the graph where the vertices $i$ and $j$ have been identified. If $G$ has a nontrivial ear decomposition, then so does $G'$.
\end{prop}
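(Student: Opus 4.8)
The plan is to take a given nontrivial ear decomposition $\calE=\{P_0,\ldots,P_t\}$ of $G$ and push it forward through the identification map $\pi\colon V\to V'$ that sends both $i$ and $j$ to a single vertex $w$ and fixes every other vertex. Under $\pi$ the two $2$-cycle edges $i\to j$ and $j\to i$ become loops at $w$ and disappear, while every other edge $x\to y$ maps to $\pi(x)\to\pi(y)$. The whole argument rests on two structural facts about how the $2$-cycle can sit inside $\calE$, after which the pushforward is essentially bookkeeping.

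First I would prove a \emph{localization} statement: in a nontrivial ear decomposition the $2$-cycle $\{i\to j,\,j\to i\}$ is either the initial cycle $P_0=(i,j,i)$ (which forces $1\in\{i,j\}$, since $P_0$ contains vertex $1$) or a single ear $P_\ell=(i,j,i)$ with $i\in V_{\ell-1}$ and $j$ newly introduced. The key observation is that in a nontrivial ear $(v_0,\ldots,v_k)$ with $k\ge 2$ every edge has at least one endpoint among the interior vertices $v_1,\ldots,v_{k-1}$, all of which are new; hence any edge both of whose endpoints already lie in $V_{\ell-1}$ can occur only as the single edge of a trivial ear. Applying this to $i\to j$ and $j\to i$ shows that neither can be a chord, so each must be an introducing edge of its ear; a short case analysis on the order in which $i$ and $j$ appear then forces the two edges into the \emph{same} ear, which must therefore be exactly $(i,j,i)$ (or be $P_0$).

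Second I would show that in such a decomposition $i$ and $j$ have no common in-neighbour and no common out-neighbour other than each other; this guarantees that $\pi$ identifies no two distinct edges, so the images of the ears still partition $E(G')$. Here I would use that, apart from its two introducing edges, every further in-edge of a vertex $v$ is the last edge of some later ear ending at $v$, whose penultimate vertex is then an interior vertex introduced by that ear. If $u$ were a common in-neighbour of $i$ and $j$, one tracks which ears introduce $u$, $i$ and $j$ and compares the order in which they occur; this yields a contradiction, since the ear ending at $i$ and the ear ending at $j$ would have to coincide, or the required introduction orders would close up into an impossible cycle. The out-neighbour statement then follows by applying the same argument to the reversed graph.

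With both facts in hand the construction is immediate. In the configuration $P_0=(i,j,i)$ I would discard $P_0$ and take $\pi(P_1)$ as the new initial cycle: its two endpoints lie in $\{i,j\}$ and so collapse to $w=$ vertex $1$, making $\pi(P_1)$ a genuine cycle through vertex $1$, and then $\pi(P_2),\ldots,\pi(P_t)$ are appended in order as (possibly closed) nontrivial ears. In the configuration $P_\ell=(i,j,i)$ I would delete $P_\ell$ and push forward all remaining ears; since $j$ is introduced only by $P_\ell$ it occurs in later ears solely as an endpoint, which collapses harmlessly onto the already-present vertex $w$, while $\pi(P_0)$ keeps vertex $1$ and its cycle structure. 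In both cases no surviving ear contains a $2$-cycle edge, so no loops are created, and every ear retains its length, giving a nontrivial ear decomposition of $G'$. The routine part is this pushforward; the real work, and the main obstacle, lies in the two structural statements—above all the no-common-neighbour claim, whose proof hinges on a careful comparison of the orders in which the relevant ears introduce their vertices. The only loose end is the degenerate case $t=0$, in which $G$ is the bare $2$-cycle and $G'$ a single vertex, which is dealt with separately and vacuously.
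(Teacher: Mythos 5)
Your proposal is correct and follows the same overall strategy as the paper's proof: localize the $2$-cycle to a single ear (or to the initial cycle), delete that ear, and push the remaining ears forward through the identification of $i$ and $j$. The paper's argument is, however, much shorter. For the localization it only observes that if one of the two edges $i\to j$, $j\to i$ sat in an ear without the other, the other edge could only ever appear as a trivial ear; it then removes the $2$-cycle ear, replaces $j$ by $i$ in the remaining ears, and notes that ear lengths are unchanged. Your second structural claim --- that $i$ and $j$ have no common in-neighbour and no common out-neighbour besides each other --- has no counterpart in the paper, and it addresses a genuine point that the paper passes over in silence: since $G'$ is a simple graph, the identification could in principle merge two distinct edges $u\to i$ and $u\to j$ of $G$ into a single edge of $G'$, which would destroy the edge-disjointness of the pushed-forward ears. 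Your order-comparison argument (each extra in-edge of an already-introduced vertex must be the terminal edge of a later ear whose penultimate vertex is freshly introduced, and a vertex is introduced by only one ear) does close all the cases, so this lemma holds; your proof is therefore somewhat longer but also more complete than the one in the paper. The only stylistic difference in the pushforward itself is that in the configuration where the $2$-cycle is $P_0$ you promote $\pi(P_1)$ to the new initial cycle, whereas the paper does not separate this case; both treatments work because $P_0$ containing vertex $1$ forces $1\in\{i,j\}$ there, exactly as you note.
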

\begin{proof}
Let $\calE$ be a nontrivial ear decomposition of $G$. Any 2-cycle $C$ in $G$ must appear as an ear in $\calE$, because if there is an ear that contains only one of the two edges in $C$, then the other edge can only appear as a trivial ear. After identifying vertices $i$ and $j$ to obtain $G'$, the 2-cycle no longer exists. A nontrivial ear decomposition of $G'$ is obtained from $\calE$ by removing the ear that is equal to the 2-cycle and replacing vertex $j$ by vertex $i$ in the remaining ears. Note that the number of edges in these ears does not change, so they remain nontrivial.
\end{proof}

A special case of this theorem occurs when $G$ has an exchange, and $G'$ is obtained by collapsing the exchange. This shows that Conjecture~\ref{conj:MS} holds when $G$ has a nontrivial ear decomposition.

Theorem~\ref{thm:CEDimpliesGood} gives rise to two options to turn a graph that does not have the expected dimension into a graph that does have the expect dimension. If $G$ does not have the expected dimension, then every ear decomposition of $G$ contains a trivial ear. In order to transform the graph into one that has a nontrivial ear decomposition, start with an arbitrary ear decomposition and either remove the trivial ears, or subdivide the corresponding edges, such that they are no longer trivial. To keep the number of changes as small as possible, one should start with an ear decomposition with the smallest possible number of trivial ears. 

\subsection{Relaxing model constraints} \label{sec:constraints}
In the introduction of this paper, we set several assumptions on the models to be considered, thus reducing our research to a rather small class of models. However, the results for ear decompositions, presented in the previous section, still hold when relaxing Assumption~\ref{ass:i/o-compartment} and Assumption~\ref{ass:leak}. If a model has multiple inputs or outputs, besides compartment 1, this will only give more information and hence may even make the model identifiable. Furthermore, if not all compartments have a leak, this means that there are less parameters to be recovered; the remaining parameters might even be identifiable. So when relaxing our assumptions as described, Theorem~\ref{thm:CEDimpliesGood} still holds. 

Moreover, the procedure described to turn a graph that does not have the expected dimension into a graph that does have the expect dimension remains valid when relaxing our assumptions. It is important to realize though, that with more information available (due to multiple inputs or outputs) or less information required (due to absent leaks), following this procedure could alter the graph much more than necessary.

\subsection{Computational results} \label{sec:computations}
In the previous section, we have seen two classes of graphs with the expected dimension: the graphs which have a nontrivial ear decomposition, and those which are minimally strongly connected. Moreover, from Corollary~\ref{cor:ISCisGood} we know that all inductively strongly connected graphs (with at most $2n-2$ vertices) have the expected dimension. Using the computer algebra package \textsc{Mathematica}, the cardinalities of these classes have been calculated for $n=3,4,5$. 

Let $\mathcal{G}(n)$ denote the class of strongly connected graphs on $n$ vertices with at most $2n-2$ edges, up to the following equivalence. Since vertex 1 has a special role, graphs are  considered to be equivalent if they can be obtained from one another by permuting vertices $2,3,\ldots,n$. Now we define the following subclasses of $\mathcal{G}(n)$:

\begin{align*}
	\mathcal{G}^*(n) &= \{ G \in \mathcal{G}(n) \; | \; G \textnormal{ has the expected dimension} \} \\
	\mathcal{G}_c(n) &= \{ G \in \mathcal{G}(n) \; | \; G \textnormal{ has a nontrivial ear decomposition} \} \\
	\mathcal{G}_{\textnormal{ISC}}(n) &= \{ G \in \mathcal{G}(n) \; | \; G \textnormal{ is inductively strongly connected} \} \\
	\mathcal{G}_{\textnormal{MSC}}(n) &= \{ G \in \mathcal{G}(n) \; | \; G \textnormal{ is minimally strongly connected} \} \\	
\end{align*}

From Section~\ref{sec:EarDecompositions}, we know that
\[ \mathcal{G}_{\text{MSC}}(n) \subsetneq \mathcal{G}_c(n) \subsetneq \mathcal{G}^*(n) \subsetneq \mathcal{G}(n). \]
The class of inductively strongly connected graphs $\mathcal{G}_{\text{ISC}}(n)$ is also a subset of $\mathcal{G}_{c}(n)$, but $\mathcal{G}_{\text{MSC}}(n)$ is not contained in $\mathcal{G}_{\text{ISC}}(n)$ or vice versa.

The cardinalities of these classes (for $n=3,4,5$) are presented in Table~\ref{table:cardinalities}. It shows that the class of graphs with a nontrivial ear decomposition is a large subset of $\mathcal{G}^*(n)$, but the ratio $|\mathcal{G}_{c}(n)|/|\mathcal{G}^*(n)|$ decreases as $n$ grows.

\begin{table*}\centering
\renewcommand{\arraystretch}{1.1}
\begin{tabular}{@{}rrrrrr@{}}\toprule
$n$ & $|\mathcal{G}(n)|$ & $|\mathcal{G}^*(n)|$ & $|\mathcal{G}_c(n)|$ & $|\mathcal{G}_{\text{ISC}}(n)|$ & $|\mathcal{G}_{\text{MSC}}(n)|$ \\ \midrule
3 & 6 & 5 & 5 & 4 & 3 \\
4 & 71 & 43 & 39 & 26 & 12 \\
5 & 1472 & 628 & 450 & 267 & 57 \\
\bottomrule
\end{tabular}
\caption{Computational results}
\label{table:cardinalities}
\end{table*}

\section{Conclusions and future work}
\label{sec:conclusions}
Inspired by the work of Meshkat and Sullivant, we have derived a new criterion to determine whether a graph has an identifiable scaling reparametrization. This criterion allowed us to derive two new constructions to obtain graphs for which an identifiable scaling reparametrization exists, extending the results of \cite{MS}. This led us to the concept of ear decompositions of graphs and a procedure to transform any graph into one that has an identifiable scaling reparametrization.

The results presented in this paper are based on a couple of assumptions, that restrict the class of graphs considered. One of our main results, Theorem~\ref{thm:CEDimpliesGood}, and the application of this theorem to obtain graphs that admit an identifiable scaling reparametrization, both remain valid under relaxed assumptions. However, with more information available (additional inputs or outputs) or less parameters to identify (missing leaks), following this procedure could alter the graph much more than necessary. Therefore, it would be very interesting to see if our approach can be generalized to a less restricted class of graphs. Some work in this direction appeared recently \cite{MSE}, showing how to add inputs, add outputs, or remove leaks, in order to obtain an identifiable model.

A nice starting point for further research would be Conjecture~\ref{conj:SubdividingEdges}. Then, the next step is to consider more general models; for example, what can we say about the case where input and output do not take place in the same compartment? Suppose the input takes place in compartment 1, while the output takes place in compartment 2. This affects the input-output equation and hence also the double characteristic polynomial map. Similar to Theorem~\ref{thm:i/o-equation}, the input-output equation becomes
\[
\det(\partial I_n -A) y = \det(\partial I_{n-1} - A_2)u,
\]
where $A_2$ denotes the matrix obtained from $A$ by removing its first row and its second column. This equation gives rise to a coordinate map $c'$, analogous to the definition of the double characteristic polynomial map $c$. It would be interesting to apply a similar analysis to the coordinate map $c'$ as we did for $c$.


\bibliographystyle{alpha}
\bibliography{bibfile}

\newcommand{\etalchar}[1]{$^{#1}$}
\begin{thebibliography}{RKS{\etalchar{+}}14}

\bibitem[B{\r{A}}70]{Bellman}
R.~Bellman and K.~{\r{A}}str\"{o}m.
\newblock On structural identifiability.
\newblock {\em Math. Biosci.}, 7:329--339, 1970.

\bibitem[BEC13]{Bearup}
D.J. Bearup, N.D. Evans, and M.J. Chappell.
\newblock The input–output relationship approach to structural
  identifiability analysis.
\newblock {\em Comput Methods Progr Biomed}, 109(2):171--181, 2013.

\bibitem[BHS14]{Boukhobza}
T.~Boukhobza, F.~Hamelin, and C.~Simon.
\newblock A graph theoretical approach to the parameters identifiability
  characterisation.
\newblock {\em Internat. J. Control}, 87(4):751--763, 2014.

\bibitem[BJG07]{Jensen}
J.~Bang-Jensen and G.~Gutin.
\newblock {\em Digraphs: Theory, algorithms and applications}.
\newblock Springer, New York, 2007.

\bibitem[CG85]{Chapman}
M.J. Chapman and K.R. Godfrey.
\newblock Some extensions to the exhaustive-modeling approach to structural
  identifiability.
\newblock {\em Math. Biosci.}, 77(1-2):305--323, 1985.

\bibitem[CG98]{Chappell}
M.J. Chappell and R.N. Gunn.
\newblock A procedure for generating locally identifiable reparameterisations
  of unidentifiable non-linear systems by the similarity transformation
  approach.
\newblock {\em Math. Biosci.}, 148(1):21--41, 1998.

\bibitem[EC00]{Evans}
N.D. Evans and M.J. Chappell.
\newblock Extensions to a procedure for generating locally identifiable
  reparameterisations of unidentifiable systems.
\newblock {\em Math. Biosci.}, 168(2):137--159, 2000.

\bibitem[God83]{Godfreybook}
K.R. Godfrey.
\newblock {\em Compartmental models and their application}.
\newblock Academic Press, Inc., London, 1983.

\bibitem[Har92]{Harris}
J.~Harris.
\newblock {\em Algebraic Geometry}.
\newblock Graduate Texts in Mathematics. Springer-Verlag, New York, 1992.

\bibitem[LG94]{Ljung}
L.~Ljung and T.~Glad.
\newblock On global identifiability for arbitrary model parametrizations.
\newblock {\em Automatica J. IFAC}, 30(2):265--276, 1994.

\bibitem[MAD11]{Meshkat3}
N.~Meshkat, C.~Anderson, and J.J. DiStefano, III.
\newblock Finding identifiable parameter combinations in nonlinear {ODE} models
  and the rational reparameterization of their input-output equations.
\newblock {\em Math. Biosci.}, 233(1):19--31, 2011.

\bibitem[MED09]{Meshkat1}
N.~Meshkat, M.~Eisenberg, and J.J. DiStefano, III.
\newblock An algorithm for finding globally identifiable parameter combinations
  of nonlinear {ODE} models using {G}r\"obner bases.
\newblock {\em Math. Biosci.}, 222(2):61--72, 2009.

\bibitem[MS14]{MS}
N.~Meshkat and S.~Sullivant.
\newblock Identifiable reparametrizations of linear compartment models.
\newblock {\em J. Symbolic Comput.}, 63:46--67, 2014.

\bibitem[MSE15]{MSE}
N.~Meshkat, S.~Sullivant, and M.~Eisenberg.
\newblock {{I}dentifiability {R}esults for {S}everal {C}lasses of {L}inear
  {C}ompartment {M}odels}.
\newblock {\em Bull. Math. Biol.}, 77(8):1620--1651, Aug 2015.

\bibitem[MTK15]{Merkt}
B.~Merkt, J.~Timmer, and D.~Kaschek.
\newblock {{H}igher-order {L}ie symmetries in identifiability and
  predictability analysis of dynamic models}.
\newblock {\em Phys Rev E Stat Nonlin Soft Matter Phys}, 92(1):012920, Jul
  2015.

\bibitem[Poh78]{Pohjanpalo}
H.~Pohjanpalo.
\newblock System identifiability based on the power series expansion of the
  solution.
\newblock {\em Math. Biosci.}, 41(1-2):21--33, 1978.

\bibitem[RKS{\etalchar{+}}14]{Raue}
A.~Raue, J.~Karlsson, M.P. Saccomani, M.~Jirstrand, and J.~Timmer.
\newblock Comparison of approaches for parameter identifiability analysis of
  biological systems.
\newblock {\em Bioinformatics}, 30(10):1440--1448, 2014.

\bibitem[Tut47]{tutte}
W.T. Tutte.
\newblock The factorization of linear graphs.
\newblock {\em J. London Math. Soc.}, 22:107--111, 1947.

\bibitem[YEC09]{Yates}
J.W.T. Yates, N.D. Evans, and M.J. Chappell.
\newblock Structural identifiability analysis via symmetries of differential
  equations.
\newblock {\em Automatica}, 45:2585--2591, 2009.

\end{thebibliography}

\end{document}